\theoremstyle{plain}
\newtheorem{thm}{Theorem}[section]
\newtheorem{defn}[thm]{Definition}
\newtheorem{prop}[thm]{Proposition}
\newtheorem{cor}[thm]{Corollary}
\newtheorem{lem}[thm]{Lemma}
\theoremstyle{remark}
\newtheorem{rem}{Remark}
\numberwithin{equation}{section}
\DeclareMathOperator{\hdim}{\dim_H}
\DeclareMathOperator{\fdim}{\dim_F}
\newcommand{\dif}{ \, \mathrm d}
\newcommand{\rc}{\mathcal R}
\newcommand{\supp}{\mathrm{supp}}
\newcommand{\N}{\mathbb N}
\newcommand{\R}{\mathbb R}
\newcommand{\Z}{\mathbb Z}
\newcommand{\lm}{\mathcal L}
\renewcommand{\hm}{\mathcal H}
\newcommand{\hc}{\mathcal H_\infty}
\newcommand{\ck}{\mathcal K}
\newcommand{\cp}{\mathcal P}
\newcommand{\cb}{\mathcal B}
\newcommand{\ic}{\mathrm{i}}
\newcommand{\ca}{\mathcal A}
\newcommand{\cm}{\mathcal M}
\newcommand{\bd}{\bm \delta}
\newcommand{\bx}{\mathbf{x}}
\newcommand{\by}{\mathbf{y}}
\newcommand{\cha}{\mathbf{1}}
\newcommand{\bz}{\mathbf{z}}
\newcommand{\bk}{\mathbf{k}}
\newcommand{\bl}{\mathbf{l}}
\newcommand{\bq}{\mathbf{q}}
\newcommand{\bp}{\mathbf{p}}
\newcommand{\bt}{\mathbf{t}}
\newcommand{\qaq}{\mathrm{\quad and\quad}}
\newcommand{\red}[1]{\textcolor{red}{#1}}
\begin{document}
		\title[Hausdorff measure and Fourier dimensions]{Hausdorff measure and Fourier dimensions of limsup sets arising in weighted and multiplicative Diophantine approximation}
	\author{Yubin He}

	\address{Department of Mathematics, Shantou University, Shantou, Guangdong, 515063, China}
%	\address{Department of Mathematics, South China University of Technology,	Guangzhou, Guangdong 510641, P.~R.\ China}

	\email{ybhe@stu.edu.cn}

%	\author{Lingmin Liao}
%
%	\address{School of Mathematics and Statistics, Wuhan University, Wuhan, Hubei 430072, China}
%
%	\email{lmliao@whu.edu.cn}

	\subjclass[2020]{11J83, 11K60}

	\keywords{Diophantine approximation, Hausdorff measure, Fourier dimension.}
	\begin{abstract}
		 The classical Khintchine--Jarn\'ik Theorem provides elegant criteria for determining the Lebesgue measure and Hausdorff measure of sets of points approximated by rational points, which has inspired much modern research in metric Diophantine approximation. This paper concerns the Lebesgue measure, Hausdorff measure and Fourier dimensions of sets arising in weighted and multiplicative Diophantine approximation.

		 We provide zero-full laws for determining the Lebesgue measure and Hausdorff measure of the sets under consideration. In particular, the criterion for the weighted setup refines a dimensional result given by Li, Liao, Velani, Wang, and Zorin [Adv. Math. (2025)], while the criteria for the multiplicative setup answer a question raised by Hussain and Simmons [J. Number Theory (2018)] and extend beyond it.
		 A crucial observation is that, even in higher dimensions, both setups are more appropriately understood as consequences of the `balls-to-rectangles' mass transference principle.

		We also determine the Fourier dimensions of these sets. The results we obtain indicate that, in line with the existence results, these sets are generally non-Salem sets, except in the one-dimensional case. This phenomenon can be partly explained by another result of this paper, which states that the Fourier dimension of the product of two sets equals the minimum of their respective Fourier dimensions.
	\end{abstract}
	\maketitle
%		\setcounter{tocdepth}{1}
%	\tableofcontents
\section{Introduction}\label{s:intro}
Let $n\ge 1$ and $m\ge 1$ be integers. Let $\Psi=(\psi_1,\dots,\psi_m)$ be an $m$-tuple of multivariable functions with $\psi_j:\Z^n\to \R_{\ge 0}$ for $1\le j\le m$. Each $\psi_j$ is also referred to as an {\em approximating function}. If $\psi_j(\bq_1)=\psi_j(\bq_2)$ whenever $|\bq_1|=|\bq_2|$, then we say $\psi_j$ is univariable instead of multivariable, where $|\bq|=\max_{1\le i\le n}|q_i|$. An approximating function $\psi:\Z^n\to\R_{\ge 0}$ is said to be non-increasing if
\[\psi(q_1,\dots,q_n)\ge\psi(q_1',\dots,q_n')\quad\text{whenever $|q_\ell|\le| q_\ell'|$ for $1\le \ell\le n$}.\]
For $\Psi=(\psi_1,\dots,\psi_m)$ with $\psi_j:\Z^n\to \R_{\ge 0}$ for $1\le j\le m$, define
\begin{equation}\label{eq:weighted}
	W(n,m;\Psi):=\big\{\bx\in[0,1]^{nm}:\|\bq\bx_j\|<\psi_j(\bq)\text{ $(1\le j\le m)$}\text{ for i.m. $\bq\in\Z^n$}\big\},
\end{equation}
and for $\psi:\Z^n\to\R_{\ge 0}$, define
\begin{equation}\label{eq:multiplicative}
	\cm^\times(n,m;\psi):=\bigg\{\bx\in[0,1]^{nm}:\prod_{j=1}^m\|\bq\bx_j\|<\psi(\bq)\text{ for i.m. $\bq\in\Z^n$}\bigg\},
\end{equation}
where $\|\cdot\|$ denotes the distance of a point in $\R$ to the nearest integer and i.m. stands for `{\em infinitely many}'.
Here, $\bx=(\bx_1,\dots,\bx_m)$ and $\bq$ are regarded as an $n\times m$ matrix and a row vector, respectively.  Throughout, the symbols $\ll$ and $\gg$ will be used to indicate an inequality with an unspecified positive multiplicative constant. By $a\asymp b$ we mean $a\ll b$ and $b\ll a$.

The modern study of the metric properties (in terms of Lebesgue measure and Hausdorff dimension/measure) of the sets $W(n,m;\Psi)$ was initiated by Khintchine \cite{Kh24} and Jarn\'ik \cite{Jarnik31}, who established elegant criteria, respectively, for determining the Lebesgue and Hausdorff measures of $W(1,1;\psi)$ under a reasonable monotonic assumption on $\psi$. The connection between the Fourier dimension and the metric Diophantine approximation emerged from the seminal work of Kaufman \cite{Ka81}, in which he provided the first explicit example of a set, namely $W(1,1;q\mapsto q^{-\tau})$ with $\tau>1$, whose Fourier and Hausdorff dimensions are the same. This paper mainly concerns the Lebesgue/Hausdorff measures and Fourier dimensions of $W(n,m;\Psi)$ and $\cm^\times(n,m;\psi)$. Although these two problems are closely related, they stem from distinct motivations. Therefore, we will introduce the relevant background and present the corresponding results separately in the following two subsections.

\subsection{Measure-theoretical dichotomy results}
Throughout,  a continuous, non-decreasing function $f$ defined on $\R^+$ and satisfying $\lim_{r\to 0^+}f(r)=0$ will be called a {\em dimension function}. For a dimension function $f$ and $s\ge 0$, by $f\preceq s$
we mean that
\begin{equation}\label{eq:prec}
	\frac{f(y)}{y^s}\le \frac{f(x)}{x^s}\quad\text{for any $0<x<y$},
\end{equation}
which implies that
\[\lim_{r\to 0^+}\frac{f(r)}{r^s}>0.\]
The limit could be finite or infinite. If the limit is infinite, then we write $f\prec s$ instead of $f\preceq s$. The notation $s\preceq f$ ($s\prec f$) is defined analogously.

For now, let us focus on the non-weighted case where $\psi_1=\cdots=\psi_m$, and we denote $\Psi$ by $\psi$. For monotonic univariable $\psi$, the fundamental Khintchine--Groshev Theorem  provides a criterion for determining whether the Lebesgue measure of $W(n,m;\psi)$ is zero or full, depending on whether a certain series converges or diverges. The necessity of the monotonicity assumption on $\psi$ has been one of the key questions in the metric Diophantine approximation. In the one-dimensional case ($n=m=1$), the
famous Duffin--Schaeffer counterexample shows that the monotonicity assumption on $\psi$
is absolutely necessary. For higher dimensional cases ($nm>1$), it was shown by the works of Schmidt \cite{Sc60} ($n\ge3$), Gallagher \cite{Ga65} ($n=1$ and $m>1$), and eventually Beresnevich and Velani \cite{BV10} ($n=2$) that the monotonicity assumption can be safely removed. Moreover, Beresnevich and Velani further showed the zero-one law remains valid even for multivariable approximating function.
\begin{thm}[{\cite{Kh24} and \cite[Theorem 5]{BV10}}]\label{t:BV}
	Let $n,m\ge1$ and let $\psi:\Z^n\to\R_{\ge 0}$ be an approximating function. Then,
		\[\lm^{nm}\big(W(n,m;\psi)\big)=\begin{dcases}
		0&\text{ if $\sum_{\bq\in\Z^n}\psi(\bq)^m<\infty$},\\
		1&\text{ if $\sum_{\bq\in\Z^n}\psi(\bq)^m=\infty$ and $\psi$ is non-increasing},
	\end{dcases}\]
	where $\lm^{nm}$ denotes the $nm$-dimensional Lebesgue measure. If $nm>1$, then the monotonicity assumption in the divergence part can be removed.
\end{thm}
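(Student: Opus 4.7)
The proof decomposes naturally into three parts of increasing difficulty: the convergence half, the divergence half under the monotonicity hypothesis, and the removal of monotonicity when $nm>1$. The first two are classical; the third is the genuinely deep part and is the content of \cite[Theorem~5]{BV10}.

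For the convergence half, I would write $W(n,m;\psi)=\limsup_{|\bq|\to\infty}A_\bq$, where
\[A_\bq:=\big\{\bx\in[0,1]^{nm}:\|\bq\bx_j\|<\psi(\bq),\ 1\le j\le m\big\}.\]
For each $\bq\ne\mathbf 0$ the slice $\{\bx_j\in[0,1]^n:\|\bq\bx_j\|<\psi(\bq)\}$ is a union of parallel strips of total Lebesgue measure at most $2\psi(\bq)$, so by Fubini $\lm^{nm}(A_\bq)\le(2\psi(\bq))^m$. Convergence of $\sum_\bq\psi(\bq)^m$ together with the Borel--Cantelli lemma then yields $\lm^{nm}(W(n,m;\psi))=0$.

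For the divergence half under monotonicity, I would first reduce the claim to $\lm^{nm}(W(n,m;\psi))>0$ via Cassels' zero--one law (invariance of $W(n,m;\psi)$ under $\Z^{nm}$-translations combined with a Lebesgue density argument). Positivity itself would follow from the Chung--Erd\H{o}s form of the divergence Borel--Cantelli lemma, once one establishes pairwise quasi-independence
\[\lm^{nm}(A_\bq\cap A_{\bq'})\ll\lm^{nm}(A_\bq)\,\lm^{nm}(A_{\bq'}).\]
In the classical one-dimensional setting of Khintchine, this reduces to an elementary count of integer points in the intersection region, with monotonicity ensuring that the error terms are absorbed by the main term; the general monotonic case follows by the same mechanism after adapting the count to $n\times m$ matrices.

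Removing monotonicity when $nm>1$ is the heart of the theorem, and I would follow the three separate strategies used in the cited works: for $n=1$ and $m\ge 2$, Gallagher's Fourier-analytic argument on the $m$-torus; for $n\ge 3$, Schmidt's convex-body lattice-point count; and for the remaining case $n=2$, the fibering argument of Beresnevich--Velani, which slices $[0,1]^{2m}$ along a one-dimensional direction and reduces to a conditional Duffin--Schaeffer-type statement on each fiber. The main obstacle is precisely this $n=2$ case: without monotonicity the pairwise intersections $\lm^{nm}(A_\bq\cap A_{\bq'})$ need not be controlled by a product of marginals in the naive sense, so one must instead establish a fibered version of quasi-independence and appeal to the ubiquity framework to recover full measure.
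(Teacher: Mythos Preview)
The paper does not prove Theorem~\ref{t:BV}; it is quoted as a known background result from \cite{Kh24} and \cite{BV10}, so there is no proof in the paper to compare your proposal against directly.

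That said, your sketch of the convergence half and of the monotonic divergence half is standard and correct. Your account of how monotonicity is removed when $nm>1$, however, does not match what \cite{BV10} actually does. You describe three separate mechanisms (Gallagher for $n=1$, Schmidt for $n\ge3$, and a ``fibering'' reduction to a Duffin--Schaeffer-type statement for $n=2$), but the Beresnevich--Velani argument is uniform across all $nm>1$: one passes to the coprime sets $R'(\bq,\psi(\bq))$ in which each $p_j$ is required to satisfy $\gcd(\bq,p_j)=1$, and then proves (their Lemmas~8 and~9, restated here as Lemma~\ref{l:BV}) that these sets are genuinely pairwise quasi-independent for $\bq_1\ne\pm\bq_2$, with no monotonicity hypothesis. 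The only residual issue is the factor $(\varphi(d)/d)^m$, $d=\gcd(\bq)$, appearing in $\lm^{nm}(R'(\bq,\psi(\bq)))$, and one must check that the restricted sum still diverges. There is no fibering and no Duffin--Schaeffer input. In fact the paper's own proof of the weighted Theorem~\ref{t:Schmidt} in Section~\ref{s:Lebpart} follows exactly this template (zero--one law from \cite{Li13}, coprime quasi-independence from \cite{BV10}, and the positive-density Lemma~\ref{l:positivedensity} to handle the Euler factor); specialising that argument to $\psi_1=\cdots=\psi_m=\psi$ would recover the non-monotonic divergence half of Theorem~\ref{t:BV}.
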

The Lebesgue measure of $W(n,m;\psi)$ will be zero if the series in the theorem above converges. In this case, Hausdorff dimension and Hausdorff $f$-measure (denoted by  $\hdim$ and $\hm^f$, respectively) often provide a meaningful way to distinguish the relative `sizes' of these sets. Another fundamental result, due to Jarn\'ik \cite{Jarnik31}, provides a criterion analogous to Theorem \ref{t:BV} for determining the Hausdorff measure of $W(1,m;\psi)$. Although Hausdorff theory has been thought to be a subtle refinement of the Lebesgue theory, the remarkable `balls-to-balls' mass transference principle developed by Beresnevich and Velani \cite{BV06} allows us to transfer Lebesgue measure theoretic statements for lim sup sets defined by balls to Hausdorff statements, which means that Jarn\'ik's theorem is implied by Khintchine's theorem. Following this philosophy, Theorem \ref{t:BV}, together with the mass transference principle for systems of linear forms due to Allen and Beresnevich \cite{AB18} (a natural generalization of the original one), allows the Hausdorff measure of
$W(n,m;\psi)$ to be expressed in the following simple form.

\begin{thm}[{\cite{Jarnik31,AB18}}]\label{t:conAB}
	Let $n,m\ge 1$. Let $\psi:\Z^n\to\R_{\ge 0}$ be an approximating function. Let $f$ be a dimension function such that $(n-1)m\prec f\preceq nm$. Suppose that $\psi$ is non-increasing. Then,
	\[\hm^f\big(W(n,m;\psi)\big)=\begin{dcases}
		0&\text{ if $\sum_{\bq\in\Z^n\setminus\{0\}}f\bigg(\frac{\red{\psi(\bq)}}{|\bq|}\bigg)\bigg(\frac{\red{\psi(\bq)}}{|\bq|}\bigg)^{(1-n)m}|\bq|^m<\infty$},\\
		\hm^f([0,1]^{nm})&\text{ if $\sum_{\bq\in\Z^n\setminus\{0\}}f\bigg(\frac{\red{\psi(\bq)}}{|\bq|}\bigg)\bigg(\frac{\red{\psi(\bq)}}{|\bq|}\bigg)^{(1-n)m}|\bq|^m=\infty$}.
	\end{dcases}\]
	If $nm>1$, then the monotonicity assumption on $\psi$ can be removed.
\end{thm}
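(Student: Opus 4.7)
The plan is to derive the theorem by combining two ingredients: the Lebesgue--theoretic Khintchine--Groshev theorem (Theorem \ref{t:BV}) for the divergence half, and a direct Hausdorff--Cantelli covering argument for the convergence half. The divergence direction will be obtained by feeding Theorem \ref{t:BV} into the mass transference principle for systems of linear forms of Allen--Beresnevich \cite{AB18}, with the series appearing in the statement encoding precisely the scaling produced by the gauge $f$.

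For the convergence direction, I would write $W(n,m;\psi)=\limsup_{\bq\in\Z^n\setminus\{0\}} R_\bq$ with
\[R_\bq=\bigl\{\bx\in[0,1]^{nm}:\|\bq\bx_j\|<\psi(\bq),\;1\le j\le m\bigr\}=\prod_{j=1}^m S_\bq,\]
and each $S_\bq\subset[0,1]^n$ is a union of $\ll|\bq|$ slabs of thickness $\psi(\bq)/|\bq|$ parallel to the hyperplane $\bq\cdot\by\in\Z$. Covering each slab by $\ll(\psi(\bq)/|\bq|)^{-(n-1)}$ balls of radius $\psi(\bq)/|\bq|$ and then taking products over $j$ gives a cover of $R_\bq$ by $\ll|\bq|^m(\psi(\bq)/|\bq|)^{-(n-1)m}$ balls in $\R^{nm}$ of that same radius. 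Summing the $f$-contents of these covers and regrouping by the value $|\bq|$ (where non-increasingness of $\psi$ lets us replace $\psi(\bq)$ by $\psi(|\bq|)$) reproduces exactly the Hausdorff series of the theorem, so convergence forces $\hm^f(W(n,m;\psi))=0$ via the Hausdorff--Cantelli lemma. The constraint $f\preceq nm$ is precisely what guarantees that radius $\psi(|\bq|)/|\bq|$ is the efficient covering scale, while $(n-1)m\prec f$ prevents a degenerate blow-up from the $(n-1)m$-dimensional nature of the slabs.

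For the divergence direction, I would introduce an auxiliary approximating function $\tilde\psi$ designed so that its Lebesgue series $\sum_\bq\tilde\psi(\bq)^m$ matches the Hausdorff series of the statement, namely
\[\tilde\psi(\bq)\asymp |\bq|\cdot\bigl(\psi(|\bq|)/|\bq|\bigr)^{1-n}\cdot f\bigl(\psi(|\bq|)/|\bq|\bigr)^{1/m}.\]
Divergence of the Hausdorff series then gives $\sum_\bq\tilde\psi(\bq)^m=\infty$, and Theorem \ref{t:BV} yields $\lm^{nm}(W(n,m;\tilde\psi))=1$; crucially, when $nm>1$ the monotonicity-free divergence half of Theorem \ref{t:BV} applies to $\tilde\psi$ even though $\tilde\psi$ need not be non-increasing, and this is exactly what lets us drop the monotonicity hypothesis on $\psi$ in that regime. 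The rectangles defining $W(n,m;\tilde\psi)$ are the $f$-scaled versions of the rectangles $R_\bq$ defining $W(n,m;\psi)$, in the sense required by the Allen--Beresnevich mass transference principle for systems of linear forms \cite{AB18}, which then converts the full Lebesgue measure of $W(n,m;\tilde\psi)$ into $\hm^f(W(n,m;\psi))=\hm^f([0,1]^{nm})$.

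The main obstacle, as I see it, is not conceptual but a careful bookkeeping exercise: one must verify that the rectangles $R_\bq$ fit the framework of \cite{AB18}, that the gauge condition $(n-1)m\prec f\preceq nm$ corresponds to the admissibility hypotheses of the MTP, and that the definition of $\tilde\psi$ above is exactly the one that the MTP scaling prescribes after taking into account the $m$-fold product structure. In the one-dimensional case $nm=1$, monotonicity of $\tilde\psi$ must be inherited from that of $\psi$; under the gauge condition $f\preceq 1$ this follows by writing $f(x)=x\cdot g(x)$ with $g$ non-decreasing. Once these translations are made explicit, the conclusion reduces to direct applications of Theorem \ref{t:BV} and \cite{AB18}.
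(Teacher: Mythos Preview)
Your proposal matches the paper's treatment: the paper does not give its own proof of this theorem but states it as a known consequence of Theorem~\ref{t:BV} combined with the Allen--Beresnevich mass transference principle for systems of linear forms, exactly as you outline. Your convergence argument via the natural slab cover is also the same as the one the paper later spells out (in Section~\ref{ss:convergence}) for the more general weighted theorem.

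There is, however, a slip in your treatment of the $nm=1$ case. The condition $f\preceq 1$ means that $g(x)=f(x)/x$ is \emph{non-increasing}, not non-decreasing as you wrote. With the correct direction, your argument for the monotonicity of $\tilde\psi(q)=q\,f(\psi(q)/q)=\psi(q)\cdot g(\psi(q)/q)$ breaks down: $\psi(q)$ is non-increasing while $g(\psi(q)/q)$ is non-decreasing in $q$, so the product need not be monotone (take $f(x)=x^{1/2}$ and $\psi(q)=1/\log q$ to see $\tilde\psi(q)=(q/\log q)^{1/2}$ increase). This is a genuine technical issue in deriving Jarn\'ik from Khintchine via the MTP, and it is handled in \cite{BV06,AB18} by working directly with the $\limsup$ of blown-up balls rather than through an auxiliary approximating function, so that one never needs to invoke the monotonic Khintchine theorem on $\tilde\psi$ itself. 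You should either cite that resolution or rewrite the $nm=1$ step to avoid the monotonicity claim.
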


A natural but non-trivial generalization of the previously mentioned results is to consider the weighted theory, that is, $\psi_i\ne\psi_j$ for some $1\le i,j\le m$. The following result, partly due to Schmidt \cite{Sc60} and Hussain and Yusupova \cite{HY14}, provides a weighted analogue of Theorem \ref{t:BV}. For consistency, we present it in line with the setup of this paper.
\begin{thm}\label{t:Schmidt}
	Let $\Psi=(\psi_1,\dots,\psi_m)$ be an $m$-tuple of non-negative multivariable functions. Assume one of the following two conditions:
	\begin{enumerate}
		\item $n\ge 2$ and $\psi_j$ is univariable for $1\le j\le m$.
		\item there exists $\alpha\in\R$ such that for any $0<q_1<q_2\in\N$,
		\[q_1^\alpha \sum_{\bq_1\in\Z^n:|\bq_1|=q_1}\prod_{j=1}^m\psi_j(\bq_1)\gg q_2^\alpha \sum_{\bq_2\in\Z^n:|\bq_2|=q_2}\prod_{j=1}^m\psi_j(\bq_2).\]
	\end{enumerate}
	Then,
	\[\lm^{nm}\big(W(n,m;\Psi)\big)=\begin{dcases}
		0&\text{ if $\sum_{\bq\in\Z^n}\prod_{j=1}^m\psi_j(\bq)<\infty$},\\
		1&\text{ if $\sum_{\bq\in\Z^n}\prod_{j=1}^m\psi_j(\bq)=\infty$}.
	\end{dcases}\]

\end{thm}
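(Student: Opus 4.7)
\emph{Proof proposal.}
For $\bq\in\Z^n\setminus\{0\}$, set $R_\bq:=\{\bx\in[0,1]^{nm}:\|\bq\bx_j\|<\psi_j(\bq),\ 1\le j\le m\}$, so that $W(n,m;\Psi)=\limsup_{\bq}R_\bq$. Since for each $\bq\ne 0$ the map $\bx_j\mapsto\bq\cdot\bx_j\bmod 1$ pushes Lebesgue measure on $[0,1]^n$ forward to Lebesgue measure on $\R/\Z$, the $j$-th factor of the product set $R_\bq$ has $\lm^n$-measure $\asymp\min(1,\psi_j(\bq))$, and hence
\[\lm^{nm}(R_\bq)\asymp\prod_{j=1}^m\min(1,\psi_j(\bq)).\]
The convergence half follows immediately from the direct Borel--Cantelli lemma applied to $\{R_\bq\}_{\bq\in\Z^n\setminus\{0\}}$.

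For the divergence half, my plan is to verify a quasi-independence estimate of the form
\[\sum_{\bq_1,\bq_2\in\cq_N}\lm^{nm}(R_{\bq_1}\cap R_{\bq_2})\ll\bigg(\sum_{\bq\in\cq_N}\lm^{nm}(R_\bq)\bigg)^2\]
along truncations $\cq_N=\{\bq\in\Z^n:|\bq|\le N\}$ for which the right-hand side tends to infinity, and then invoke the divergence Borel--Cantelli (Chung--Erd\H{o}s) lemma to deduce $\lm^{nm}(W(n,m;\Psi))>0$; Gallagher's zero-one law upgrades this to full measure. The key per-factor estimate is that, for $\bq_1$ and $\bq_2$ linearly independent in $\R^n$, a linear change of variables in $\bx_j$ gives
\[\lm^n\{\bx_j:\|\bq_1\bx_j\|<\psi_j(\bq_1),\ \|\bq_2\bx_j\|<\psi_j(\bq_2)\}\ll \psi_j(\bq_1)\psi_j(\bq_2),\]
and multiplying over $j$ yields the desired product bound on such pairs.

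The main obstacle is the contribution from pairs $(\bq_1,\bq_2)$ that are \emph{parallel} as vectors in $\R^n$, for which the above factorisation fails. Hypotheses (1) and (2) are designed precisely to dispose of this degeneracy. Under (1), the univariability of each $\psi_j$ together with $n\ge 2$ permits grouping integer points by their common norm $q=|\bq|$ (there being $\asymp q^{n-1}$ such vectors) and reduces the problem to a one-parameter weighted Khintchine-type sum in the spirit of Schmidt \cite{Sc60}. Under (2), the averaged monotonicity in $|\bq|$ implies that any single line $\R\bq$ carries only a negligible fraction of the total mass, so the parallel contribution can be absorbed into the off-diagonal bound; this is essentially the mechanism exploited by Hussain--Yusupova \cite{HY14}. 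Controlling these parallel pairs, rather than the generic linearly-independent ones, is the step I expect to occupy the bulk of the technical work.
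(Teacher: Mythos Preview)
Your convergence half and the overall divergence scaffold (quasi-independence plus a zero--one law to upgrade positive measure to full) match the paper. The core divergence mechanism, however, is different, and your interpretation of hypothesis~(2) is off.

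The paper does not split into linearly-independent versus parallel pairs. Instead it passes to the coprime subsets
\[
R'(\bq,\bd)=\big\{\bx: |\bq\bx_j-p_j|<\delta_j\text{ for some }p_j\text{ with }\gcd(\bq,p_j)=1,\ 1\le j\le m\big\},
\]
and invokes a lemma of Beresnevich--Velani \cite{BV10} giving $\lm^n(R_j'(\bq_1,\delta_1)\cap R_j'(\bq_2,\delta_2))\ll\delta_1\delta_2$ for \emph{all} $\bq_1\ne\pm\bq_2$, parallel or not. This eliminates your ``main obstacle'' in one stroke; only the single pair $\{\bq,-\bq\}$ needs to be handled, and one simply keeps whichever has the larger product $\prod_j\psi_j$.

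The cost of coprimality is a factor $(\varphi(d)/d)^m$ in $\lm^{nm}(R'(\bq,\cdot))$, where $d=\gcd(\bq)$. \emph{This} is where hypothesis~(2) enters: it ensures that after restricting to the positive-density set $\Lambda=\{q\in\N:\varphi(q)/q\ge 1/2\}$, the sum $\sum_{q\in\Lambda}\sum_{|\bq|=q}\prod_j\psi_j(\bq)$ still diverges. Hypothesis~(2) is not used to control parallel pairs, and Hussain--Yusupova~\cite{HY14} is the reference for case~(1) ($n=2$, univariable), not for~(2). Note also that your linear-independence estimate is vacuous when $n=1$, since every pair of nonzero integers is parallel; the coprimality device is precisely what makes the argument uniform in $n$.
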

\begin{rem}\label{r:mea}
	The univariable situation in item (1) was initially proved by Schmidt \cite{Sc60} for $n\ge 3$, and subsequently refined by Hussain and Yusupova \cite{HY14} for $n= 2$. For $n=1$, Gallagher \cite{Ga62} obtained the same result under the assumption similar to (2). For technical reasons  (see Remark \ref{r:reasonmon}), unlike the most commonly used monotonicity assumption, we require a nearly monotonic condition, as outlined in (2).
\end{rem}

In analogy to Theorem \ref{t:conAB}, if the series in the above theorem converges, it is natural to consider the size of the set under consideration in terms of Hausdorff dimension/measure.  The following statement was first proved by Rynne \cite{Ry98} for the case $n = 1$, and later generalized by Dickinson and Rynne \cite{RD00} to all $n \ge 1$. For analogues of these results in various settings---including the $p$-adics, complex numbers, quaternions, and formal power series---we refer the reader to \cite{GHSW23}.

\begin{thm}[{\cite{Ry98,RD00}}]
	Assume that for each $1\le j\le m$, there exists $0<\tau_j<\infty$ such that $\psi_j(\bq)=|\bq|^{-\tau_j}$, and that $\tau_1+\cdots+\tau_m>1$. Then,
	\[\hdim W(n,m;\Psi)=(n-1)m+\min_{1\le i\le m}\frac{m+n+\sum_{j:\tau_j<\tau_i}(\tau_i-\tau_j)}{1+\tau_i}.\]
\end{thm}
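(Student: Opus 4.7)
The strategy is to establish, for every index $i\in\{1,\ldots,m\}$, matching bounds involving the quantity
\[s_i:=\frac{n+m+\sum_{j:\tau_j<\tau_i}(\tau_i-\tau_j)}{1+\tau_i},\]
so that taking the minimum over $i$ recovers the stated formula. For the upper bound I would use the natural cover of the limsup set. For $\bq\in\Z^n$ with $|\bq|\asymp N$, the approximation set cut out by $\bq$ decomposes into $\asymp N^m$ rectangles, each having sidelength $\asymp N^{-1-\tau_j}$ in the thin direction associated with $\bx_j$ and $1$ in each of the $(n-1)m$ tangential directions. For a fixed $i$, cover every such rectangle by balls of radius $r=N^{-1-\tau_i}$: a thin direction with $\tau_j<\tau_i$ then requires $\asymp N^{\tau_i-\tau_j}$ balls, each tangential direction requires $\asymp N^{1+\tau_i}$ balls, while thin directions with $\tau_j\ge\tau_i$ are covered by a single ball. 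Summing the resulting $s$-volume over dyadic scales $N=2^k$ shows the cover has vanishing $s$-Hausdorff pre-measure precisely when $s>(n-1)m+s_i$, giving $\hdim W(n,m;\Psi)\le(n-1)m+s_i$ for each $i$.

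For the lower bound I would fix $i_0$ attaining $\min_i s_i$ and show $\hdim W(n,m;\Psi)\ge(n-1)m+s_{i_0}-\varepsilon$ for every $\varepsilon>0$. Consistently with the philosophy announced in the abstract, the cleanest route is a `balls-to-rectangles' mass transference argument: start from Theorem \ref{t:Schmidt} applied to an auxiliary system $\widetilde{\Psi}=(|\bq|^{-\widetilde\tau_1},\ldots,|\bq|^{-\widetilde\tau_m})$ whose divergence sum is infinite, so that the corresponding limsup has full Lebesgue measure in $[0,1]^{nm}$, and then transfer balls of the appropriate radius into the anisotropic rectangles of sidelengths $(N^{-1-\tau_j})_{j}$ to extract a subset of positive $\hm^s$-measure for every $s<(n-1)m+s_{i_0}$. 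An alternative closer to the historical route of Rynne and Dickinson--Rynne is to construct an explicit regular system of rational hyperplanes at scale $N^{-1}$ and run a Cantor-set construction whose local mass estimates reproduce the exponent $s_{i_0}$.

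The main obstacle is this lower bound, which requires simultaneously accommodating the $(n-1)m$ tangential directions (where no shrinking occurs) and the $m$ normal directions whose scales $N^{-1-\tau_j}$ differ markedly. Selecting the correct dominant coordinate $i_0$, tuning the auxiliary exponents $\widetilde\tau_j$ so that the transferred rectangles sit inside $W(n,m;\Psi)$ with the optimal aspect ratio, and verifying that the case split $\tau_j<\tau_{i_0}$ versus $\tau_j\ge\tau_{i_0}$ reproduces exactly the combinatorial term $\sum_{j:\tau_j<\tau_{i_0}}(\tau_{i_0}-\tau_j)$, constitutes the technical heart of the argument.
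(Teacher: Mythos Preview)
The paper does not prove this theorem: it is quoted as a known result from Rynne \cite{Ry98} and Dickinson--Rynne \cite{RD00}, and no argument is supplied in the text. Your proposed upper-bound computation via the natural cover with balls of radius $N^{-1-\tau_i}$ is correct and is in fact the same covering argument the paper carries out in Section~\ref{ss:convergence} for the general measure statement; your lower-bound strategy via a `balls-to-rectangles' transference is precisely the philosophy the paper adopts for its more general Theorem~\ref{t:main}, which specialises (taking $f(r)=r^s$) to recover this dimension formula. The historical route you mention---regular systems and a Cantor construction---is the one actually used in \cite{Ry98,RD00}. So your proposal is sound and aligns both with the cited originals and with the paper's own general machinery, but there is no in-paper proof to compare against.
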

It is easy to prove that Dickinson and Rynne's theorem still holds if the order at infinity of each $\psi_j$ exists, i.e. $\lim_{q\to\infty}-\log\psi_j(q)/\log q<\infty$.
Regarding a general monotonic function $\Psi$, it turns out that Dickinson and Rynne's argument fails to apply, and the problem of determining the dimension of $W(n,m;\Psi)$ remained unsolved until recently. A breakthrough was achieved through the seminal work of Wang and Wu \cite{WW21}, in which they developed the so-called `rectangles-to-rectangles' mass transference principle. This result allows them to transfer a full Lebesgue measure property of a limsup set defined by a sequence of big rectangles
to the Hausdorff measure (the case $f(r)=r^s$) or Hausdorff dimension for the limsup set defined by shrinking the big rectangles to smaller ones.
As an application of this mass transference principle, they showed that $\hdim W(1,m;\Psi)$ depends on the set $\mathcal{U}(\Psi)$ of accumulation points of the sequence
\[
\bigg\{\bigg(-\frac{\log\psi_1(q)}{\log q},\dots, -\frac{\log\psi_m(q)}{\log q}\bigg)\bigg\},
\]
where $\infty$ is regarded as an accumulation point. However, applying their mass transference principle requires the assumption that the set $\mathcal{U}(\Psi)$ is bounded. A significant advancement was later made by Li, Liao, Velani, Wang, and Zorin \cite{LLVWZ24}, who extended Wang and Wu's mass transference principle to incorporate the `unbounded' case, thereby removing the necessity for $\mathcal{U}(\Psi)$ to be bounded.
\begin{thm}[{\cite{LLVWZ24,WW21}}]\label{t:dimension}
	Let $n=1$. For $1\le j\le m$, let $\psi_j:\Z\to\R^+$ be a positive and non-increasing approximating function. Then,
	\[\hdim W(1,m;\Psi)=\sup_{\bm\tau\in\mathcal U(\Psi)}\min\biggl\{\min_{i\in\mathcal L(\bm\tau)}\biggl\{\frac{m+1+\sum_{j:\tau_j<\tau_i}(\tau_i-\tau_j)}{1+\tau_i}\biggr\},\#\mathcal L(\bm\tau)\biggr\},\]
	where given $\bm\tau=(\tau_1,\dots,\tau_m)\in(\R^+\cup\{\infty\})^m$ we set $\red{\mathcal L(\bm\tau)}=\{1\le j\le m:\tau_j<\infty\}$.
\end{thm}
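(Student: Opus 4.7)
The plan is to establish matching upper and lower bounds for $\hdim W(1,m;\Psi)$, with the lower bound being delivered by the rectangles-to-rectangles mass transference principle applied along a well-chosen subsequence of integers.

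\emph{Upper bound.} For each $\bm\tau\in(\R^+\cup\{\infty\})^m$ and each $\epsilon>0$, let $W_{\bm\tau,\epsilon}$ denote the sub-limsup of $W(1,m;\Psi)$ formed by those $\bx$ with infinitely many witnesses $q$ satisfying $\bigl(-\log\psi_j(q)/\log q\bigr)_{1\le j\le m}\in B(\bm\tau,\epsilon)$, using the convention that $\tau_j=\infty$ means no upper bound is imposed. For fixed $\bm\tau$, this piece is a limsup of axis-aligned rectangles of sidelength $\asymp q^{-(1+\tau_j)\pm\epsilon}$ in each direction $j\in\mathcal L(\bm\tau)$, and vanishingly small in the remaining directions. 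Covering each rectangle by balls of the optimal radius $r_i\asymp q^{-(1+\tau_i)}$, where $i\in\mathcal L(\bm\tau)$ realises the minimum in the theorem's inner expression, and applying the Hausdorff--Cantelli lemma, bounds $\hdim W_{\bm\tau,\epsilon}$ from above by the inner expression at $\bm\tau$ up to an error tending to $0$ with $\epsilon$. Countably many such boxes cover every accumulation behaviour, so taking $\sup$ over a dense countable subset of $\mathcal U(\Psi)$ and letting $\epsilon\to 0$ yields the upper bound.

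\emph{Lower bound.} Fix $\bm\tau\in\mathcal U(\Psi)$ and extract a subsequence $(q_k)$ along which $-\log\psi_j(q_k)/\log q_k\to\tau_j$ for every $j$. Sparsifying $(q_k)$ if necessary (for instance $q_{k+1}\ge q_k^2$) so that the associated rectangles become quasi-independent, consider the auxiliary family
\[
A_k=\bigg\{\bx\in[0,1]^m:\ \|q_k x_j\|<\tfrac12 q_k^{-1}\text{ for each }j\in\mathcal L(\bm\tau)\bigg\},
\]
which leaves the coordinates $j\notin\mathcal L(\bm\tau)$ unrestricted. A divergence Borel--Cantelli argument along $(q_k)$, of the sort underlying Gallagher's theorem, shows that $\limsup_k A_k$ has full $m$-dimensional Lebesgue measure. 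The rectangles-to-rectangles mass transference principle of Wang--Wu \cite{WW21}, in the unbounded extension of Li--Liao--Velani--Wang--Zorin \cite{LLVWZ24}, then transfers this full-measure statement to the actual rectangles with sidelength $\psi_j(q_k)/q_k\asymp q_k^{-(1+\tau_j)+o(1)}$ in each direction $j\in\mathcal L(\bm\tau)$, and vanishingly small sidelengths otherwise. This produces a subset of $W(1,m;\Psi)$ of Hausdorff dimension equal to the inner expression at $\bm\tau$, and taking the supremum over $\bm\tau\in\mathcal U(\Psi)$ completes the argument.

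\emph{Main obstacle.} The essential technicality is the handling of coordinates with $\tau_j=\infty$, where the shrinking rectangles decay faster than any polynomial in $q_k$. The Wang--Wu principle handles only the bounded regime, and it is precisely the LLVWZ unbounded extension that makes the machinery applicable here; it is also what forces the cap by $\#\mathcal L(\bm\tau)$, since those infinite-exponent directions collapse out of the dimension count. A secondary difficulty is arranging $(q_k)$ so that convergence of the logarithmic ratios, divergence of the auxiliary Khintchine-type sum and sufficient sparsity for quasi-independence hold simultaneously, but the monotonicity assumption on each $\psi_j$ keeps this routine.
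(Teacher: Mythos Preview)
The paper does not prove this statement itself: Theorem~\ref{t:dimension} is quoted from \cite{WW21,LLVWZ24} as background, and the paper's own contributions begin with Theorem~\ref{t:main}. So there is no in-paper argument to compare against; what follows assesses your sketch against the approach in the cited sources.

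Your lower-bound argument has a genuine gap. You sparsify to $q_{k+1}\ge q_k^2$ and then assert that a divergence Borel--Cantelli argument gives $\lm^m(\limsup_k A_k)=1$ for
\[
A_k=\Bigl\{\bx\in[0,1]^m:\ \|q_k x_j\|<\tfrac12\,q_k^{-1}\ \text{for each }j\in\mathcal L(\bm\tau)\Bigr\}.
\]
But $\lm^m(A_k)\asymp q_k^{-\#\mathcal L(\bm\tau)}$, and under $q_{k+1}\ge q_k^2$ the series $\sum_k\lm^m(A_k)$ converges as soon as $\#\mathcal L(\bm\tau)\ge 1$. The first Borel--Cantelli lemma then forces $\lm^m(\limsup_k A_k)=0$, not $1$, so the mass transference principle has no full-measure input to transfer. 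The two desiderata you list at the end---convergence of the log-ratios to a fixed $\bm\tau$ along $(q_k)$, and divergence of the auxiliary measure sum---pull in opposite directions, and monotonicity of the $\psi_j$ alone does not reconcile them.

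The route actually taken in \cite{WW21,LLVWZ24} sidesteps this tension by applying the rectangles-to-rectangles principle to the \emph{full} sequence of $q\in\N$, with the full-measure hypothesis supplied once and for all by Dirichlet (equivalently, the divergence Khintchine theorem) over all $q$. The supremum over $\bm\tau\in\mathcal U(\Psi)$ is not obtained by first fixing $\bm\tau$ and passing to a subsequence; it emerges from the dimension formula of the transference principle itself, which is designed to track the varying side-length exponents across the whole sequence. Your upper-bound sketch via a countable covering by exponent-boxes and the Hausdorff--Cantelli lemma is, by contrast, essentially the standard argument and is fine.
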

For a monotonic function $\Psi$, Theorem \ref{t:dimension} provides a satisfactory and complete description for the Hausdorff dimension of $W(1,m;\Psi)$. However, from the perspective of Theorem \ref{t:conAB}, two natural problems arise: the higher dimensional case ($n>1$) and the Hausdorff measure of $W(n,m;\Psi)$. As mentioned by the authors \cite{LLVWZ24}, their mass transference principle does not provide information about the Hausdorff measure, as it is established under the weaker full measure assumption. However, even with stronger assumptions---such as the original full measure property or even uniform local ubiquity, studied by the author \cite{He24} and Wang and Wu \cite{WW21}---it seems that it is still difficult to develop a general `rectangles-to-rectangles' mass transference principle that implies  Hausdorff measure statement, except for the case where $f(r)=r^s$.

The first result of this paper is a weighted analogue of Theorem \ref{t:conAB}. A crucial insight is that, in the weighted setup, our result is more appropriately understood as a consequence of the `balls-to-rectangles' mass transference principle, rather than the `rectangles-to-rectangles' mass transference principle. This distinction is crucial because the former principle yields desirable information about Hausdorff measure (see Theorem \ref{t:weaken}), but only under the original full measure assumption.
\begin{thm}\label{t:main}
	 Let $\Psi=(\psi_1,\dots,\psi_m)$ be an $m$-tuple of non-negative multivariable approximating functions. Let $f\prec nm$ be a dimension function such that  $(nm-a)\preceq f\preceq (nm-a+1)$ for some $\red{1\le a\le m}$. Assume one of the following two conditions:
	 \begin{enumerate}
	 	\item $n\ge 2$ and $\psi_j$ is univariable for $1\le j\le m$,
	 	\item $\psi_j$ is non-increasing for any $1\le j\le m$.
	 \end{enumerate}
	 Then,
		\[\hm^f\big(W(n,m;\Psi)\big)=\begin{dcases}
		0&\text{ if $\sum_{\bq\in\Z^n\setminus\{0\}}t_{\bq}(\Psi,f)|\bq|^m<\infty$},\\
		\hm^f([0,1]^{nm})&\text{ if $\sum_{\bq\in\Z^n\setminus\{0\}}t_{\bq}(\Psi,f)|\bq|^m=\infty$},
	\end{dcases}\]
	where
	\[t_{\bq}(\Psi,f)=	\min_{1\le i\le m}\bigg\{f\bigg(\frac{\psi_i(\bq)}{|\bq|}\bigg)\bigg(\frac{\psi_i(\bq)}{|\bq|}\bigg)^{(1-n)m}\prod_{j\in\ck_{\bq}(i)}\frac{\psi_j(\bq)}{\psi_i(\bq)}\bigg\},\]
	and in turn,
	\[\ck_{\bq}(i):=\{1\le j\le m:\psi_j(\bq)>\psi_i(\bq)\}.\]
\end{thm}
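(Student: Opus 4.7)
The theorem splits into a convergence direction (to establish $\hm^f(W(n,m;\Psi)) = 0$) and a divergence direction (to establish $\hm^f(W(n,m;\Psi)) = \hm^f([0,1]^{nm})$). I handle the convergence direction by a direct covering argument. For each $\bq \in \Z^n\setminus\{0\}$, the solution set $E_\bq := \{\bx \in [0,1]^{nm} : \|\bq\bx_j\| < \psi_j(\bq)\ \text{for}\ 1\le j\le m\}$ decomposes into $O(|\bq|^m)$ slab products $R_{\bq,\bp}$ indexed by $\bp \in \Z^m$; the $j$-th factor of each $R_{\bq,\bp}$ is a slab in $[0,1]^n$ of thickness $\psi_j(\bq)/|\bq|$ and extent $\asymp 1$ in the remaining $n-1$ directions. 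Under the hypothesis $(nm-a) \preceq f \preceq (nm-a+1)$ with $1\le a\le m-1$, each $R_{\bq,\bp}$ is covered by balls at the scale $\psi_i(\bq)/|\bq|$ equal to the $a$-th smallest short side: one uses $(\psi_i(\bq)/|\bq|)^{-(n-1)m}$ balls for the $(n-1)m$ long directions and $\prod_{j\in\ck_\bq(i)} \psi_j(\bq)/\psi_i(\bq)$ balls for the short directions whose width exceeds the cover scale, while the other short directions are already covered by a single ball. Optimising over $i$ gives $\hm^f(R_{\bq,\bp}) \ll t_\bq(\Psi,f)$; summing over $\bp$ and $\bq$ and invoking the Hausdorff--Cantelli lemma through $W(n,m;\Psi) \subseteq \bigcup_{|\bq|\ge N} E_\bq$ then yields the convergence conclusion.

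\textbf{Divergence direction.} This is the main effort, where the balls-to-rectangles mass transference principle enters. Partition $\Z^n \setminus \{0\} = \bigsqcup_{i=1}^m \cn_i$, with $\cn_i$ the set of $\bq$ at which the minimum defining $t_\bq$ is attained at index $i$. By pigeonhole, fix some $i^*$ with $\sum_{\bq\in\cn_{i^*}} t_\bq(\Psi,f)|\bq|^m = \infty$. On $\cn_{i^*}$, enlarge each short side $\psi_j(\bq)/|\bq|$ with $j \notin \ck_\bq(i^*)\cup\{i^*\}$ up to the threshold $\psi_{i^*}(\bq)/|\bq|$, producing uniformised rectangles $B_{\bq,\bp} \supseteq R_{\bq,\bp}$ whose short sides equal $\psi_{i^*}(\bq)/|\bq|$ in $a$ of the $m$ factors and $\psi_j(\bq)/|\bq|$ (with $j\in\ck_\bq(i^*)$) in the remaining $m-a$ factors. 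Under either hypothesis~(1) or hypothesis~(2) of the theorem, Theorem~\ref{t:Schmidt} applied to the weighted Diophantine system associated with $\{B_{\bq,\bp}\}$ shows that $\limsup_{\bq\in\cn_{i^*}} B_{\bq,\bp}$ has full Lebesgue measure. A balls-to-rectangles mass transference principle, adapted to the weighted linear-forms setting, then transfers this full-Lebesgue-measure statement to the Hausdorff-$f$-measure statement $\hm^f\bigl(\limsup_{\bq\in\cn_{i^*}} R_{\bq,\bp}\bigr) = \hm^f([0,1]^{nm})$, and since this limsup is contained in $W(n,m;\Psi)$, the divergence half of the theorem follows.

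\textbf{Main obstacles.} The delicate point is calibrating the uniformised rectangles $B_{\bq,\bp}$ so that, on the one hand, the divergence of $\sum_{\bq\in\cn_{i^*}} t_\bq(\Psi,f)|\bq|^m$ implies the divergence of the series forcing full Lebesgue measure of $\limsup B_{\bq,\bp}$ through Theorem~\ref{t:Schmidt}, and, on the other hand, the Hausdorff-measure weight produced by the balls-to-rectangles transference step is exactly $t_\bq(\Psi,f)$, matching the theorem's formula. Two technical subpoints deserve attention: verifying the nearly-monotone condition~(2) of Theorem~\ref{t:Schmidt} from the multivariable non-increasing hypothesis (in case~(2) of the present theorem), and formulating the balls-to-rectangles principle in the higher-dimensional linear-forms context, in particular so that the $(n-1)m$ long slab directions are treated correctly relative to the short directions ordered by the values $\psi_j(\bq)$.
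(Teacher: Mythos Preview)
Your convergence argument is essentially the paper's and is correct; note that the hypothesis $(nm-a)\preceq f\preceq(nm-a+1)$ plays no role there, since you simply cover at every scale $\psi_i(\bq)/|\bq|$ and take the minimum.

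The divergence plan, however, has a genuine gap. Your uniformised rectangles $B_{\bq,\bp}$, obtained by raising every short side with $\psi_j(\bq)\le\psi_{i^*}(\bq)$ to the common level $\psi_{i^*}(\bq)/|\bq|$, give a Lebesgue weight
\[
\prod_j\psi'_j(\bq)=\psi_{i^*}(\bq)^m\prod_{j\in\ck_\bq(i^*)}\frac{\psi_j(\bq)}{\psi_{i^*}(\bq)},
\]
whereas a short computation shows
\[
t_\bq(\Psi,f)\,|\bq|^m=\frac{f(\psi_{i^*}(\bq)/|\bq|)}{(\psi_{i^*}(\bq)/|\bq|)^{nm}}\cdot\prod_j\psi'_j(\bq).
\]
Since $f\prec nm$ forces $f(r)/r^{nm}\to\infty$, the divergence of $\sum_{\bq\in\cn_{i^*}}t_\bq|\bq|^m$ does \emph{not} imply the divergence of $\sum_{\bq\in\cn_{i^*}}\prod_j\psi'_j(\bq)$. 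For instance, in the symmetric case $\psi_1=\cdots=\psi_m$ your sum is the original Lebesgue series, which typically converges. Theorem~\ref{t:Schmidt} therefore gives you nothing, and the transference step has no full-measure input to work from. A secondary issue is that your $B_{\bq,\bp}$ are still genuine rectangles, so invoking a ``balls-to-rectangles'' principle is a category error unless you first pass to balls.

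The paper repairs both points simultaneously. Rather than enlarging to $\psi_{i^*}(\bq)/|\bq|$, it enlarges the $k(\bq)$ smallest short sides to a $\bq$-dependent level $\varpi_\bq$, chosen precisely so that $\prod_j\phi_j(\bq)=t_\bq(\Psi,f)|\bq|^m$; here $\varpi_\bq$ sits between two consecutive ordered sides $\psi_{i_k}(\bq)/|\bq|$ and $\psi_{i_{k+1}}(\bq)/|\bq|$, and $k(\bq)$ need not be related to your fixed $i^*$ or to $a$. This new weight $\Phi$ then satisfies $\sum_\bq\prod_j\phi_j(\bq)=\infty$ by construction, and Theorem~\ref{t:Schmidt} yields $\lm^{nm}(W(n,m;\Phi))=1$ (your worry about monotonicity of $\Phi$ is exactly why Theorem~\ref{t:Schmidt}(2) is phrased with the nearly-monotone condition, and the paper verifies it). Only then does the paper cover each $\Phi$-slab by \emph{balls} of radius $\varpi_\bq$, intersect each ball with the original $\Psi$-rectangle, show this intersection has $f$-content $\gg\varpi_\bq^{nm}$ via the singular-value estimate of Proposition~\ref{p:rec}, and apply the Koivusalo--Rams/Zhong balls-to-open-sets principle (Theorem~\ref{t:weaken}).
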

\begin{rem}
	The quantity $t_{\bq}(\Psi, f)$ represents the optimal cover, or equivalently the Hausdorff $f$-content, of neighborhoods of certain hyperplanes that naturally define the $\limsup$ set $W(n,m;\Psi)$. In particular, up to a multiplicative constant, it corresponds to the Hausdorff $f$-content of a hyperrectangle if $n=1$. Moreover, the condition $(m - a) \preceq f \preceq (m-a + 1)$ arises naturally, as it ensures that the Hausdorff $f$-content of a hyperrectangle has a `clearer' representation similar to Falconer's singular value function (see Proposition \ref{p:rec}).
\end{rem}
\begin{rem}
	Let us focus on the case $n=1$ for the moment. Although the set
	$W(1,m;\Psi)$ falls within the framework of the `rectangle-to-rectangle' mass transference principles developed in \cite{He24,WW21,LLVWZ24}, these principles fail to yield the Hausdorff measure statement. An important insight is that the Hausdorff measure of $W(1,m;\Psi)$ depends on the optimal cover of each individual hyperrectangle defining $W(1,m;\Psi)$, rather than on the optimal cover of the entire sequence of hyperrectangles. This partially suggests that the weighted case is more appropriately understood as a consequence of the `balls-to-rectangles' mass transference principle (see Lemma \ref{l:content>}).
\end{rem}
 Let us now turn to the multiplicative setup. Note that the set $\cm^\times(n,m;\psi)$ can almost be described as a union of $\limsup$ sets $W(n,m;\Psi)$ for some properly chosen functions $\Psi$. So, in many cases, the metric theory for $\limsup$ sets defined by hyperrectangles underpins that of multiplicative Diophantine approximation. The study of multiplicative Diophantine approximation is motivated by the famous Littlewood conjecture: for any pair $( x_1, x_2)\in[0,1]^2$,
 \begin{equation}\label{eq:little}
 	\liminf_{q\to\infty} q\|qx_1\|\|qx_2\|=0.
 \end{equation}
 Although Littlewood's conjecture remains largely unresolved, significant progress has been achieved in the study of its metric properties. For example, Einsiedler, Katok and Lindenstrauss \cite{EKL06} used their measure rigidity result to prove that the set of exceptions to Littlewood's conjecture is extremely small, namely is of zero Hausdorff dimension. Gallagher \cite{Ga62} proved that for $\lm^2$-almost every $(x_1,x_2)$, \eqref{eq:little} can be improved by a `$\log$ square' term. Another way to describe the sizes of $\cm^\times(1,2;\psi)$ is by considering its intersection with, for example, planar curves \cite{BV07matha}, and straight lines \cite{BHV20mem,Ch18duke,CY24Inv}. For more details we refer the reader to \cite{BaV11adv,BHV13acta,CT24mem,CT24adv,PV00acta,Yu23jam} and references therein.

For the Hausdorff measure, it was first shown by Beresnevich and Velani \cite{BV15recenttrends} that for monotonic $\psi$ and $s\in (m-1,m)$,
\[\hm^s\big(\cm^\times(1,m;\psi)\big)=\begin{cases}
	0&\text{if $\sum_{q=1}^\infty q^{m-s}\psi(q)^{s+1-m}\log^{m-2}q<\infty$},\\
	\hm^s([0,1]^m)&\text{if $\sum_{q=1}^\infty q^{m-s}\psi(q)^{s+1-m}=\infty$}.
\end{cases}\]
 There was a discrepancy in the above `$s$-volume' sum conditions for convergence and divergence when $m>2$, and they ask for a zero-full law to determine the Hausdorff measure for $\cm^\times(1,m;\psi)$. A satisfactory answer was given by Hussain and Simmons \cite{HS18JNT}, who proved that the $\log$ term in the above equation is redundant for any $m\ge 2$.

Our first result in the multiplicative setup naturally extends the existing results concerning the Lebesgue measure and the Hausdorff measure on the set $\cm^\times(1,m;\psi)$, which were obtained by Gallagher \cite{Ga62} and by Hussain and Simmons \cite{HS18JNT}, respectively, to systems of linear forms ($n > 1$). The Lebesgue result also answer a question raised by Hussain and Simmons \cite{HS18JNT} in the homogeneous case.

\begin{thm}\label{t:mulm}
	Let $n,m\ge1$ and let $\psi:\Z^n\to\R^+$ be an approximating function. Suppose that $\psi$ is non-increasing. Then,
	\[\lm^{nm}\big(\cm^\times(n,m;\psi)\big)=\begin{dcases}
		0&\text{ if $\sum_{\bq\in\Z^n}\psi(\bq)\log^{m-1}(\psi(\bq)^{-1})<\infty$},\\
		1&\text{ if $\sum_{\bq\in\Z^n}\psi(\bq)\log^{m-1}(\psi(\bq)^{-1})=\infty$}.
	\end{dcases}\]
\end{thm}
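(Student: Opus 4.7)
The plan is as follows. For the \emph{convergence direction}, I would give a direct Borel--Cantelli argument. For each $\bq\ne\mathbf 0$ I plan to compute $\lm^{nm}(E_{\bq})$, where $E_{\bq}:=\{\bx\in[0,1]^{nm}:\prod_j\|\bq\bx_j\|<\psi(\bq)\}$, by first applying Fubini perpendicular to $\bq$ to obtain $\lm^n(\{\bx_j\in[0,1]^n:\|\bq\bx_j\|<t\})=\min(2t,1)$, and then invoking independence of $\bx_1,\ldots,\bx_m$ to reduce $\lm^{nm}(E_{\bq})$ to $2^m\,\mathrm{Vol}_m(\{\bm y\in[0,1/2]^m:\prod_j y_j<\psi(\bq)\})$. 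The elementary identity $\mathrm{Vol}_m(\{\bm y\in[0,1]^m:\prod_j y_j<s\})\asymp s\log^{m-1}(1/s)$ then yields $\lm^{nm}(E_{\bq})\asymp\psi(\bq)\log^{m-1}(1/\psi(\bq))$, after which the convergence half of Borel--Cantelli closes this direction.

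For the \emph{divergence direction}, my plan is a Fubini/slicing reduction to the linear-form Khintchine--Groshev theorem (Theorem~\ref{t:BV}). First I fix $(\bx_1,\ldots,\bx_{m-1})\in[0,1]^{n(m-1)}$ and observe that the $\bx_m$-slice of $\cm^\times(n,m;\psi)$ is exactly the linear-form Khintchine set $W(n,1;\tilde\psi)\subseteq[0,1]^n$ for the random approximating function $\tilde\psi(\bq):=\min(1,\psi(\bq)/\prod_{j<m}\|\bq\bx_j\|)$. By Theorem~\ref{t:BV}, which applies without monotonicity once $n\ge2$, this slice has full $\lm^n$-measure as soon as $\sum_{\bq}\tilde\psi(\bq)=\infty$, and Fubini then delivers $\lm^{nm}(\cm^\times(n,m;\psi))=1$ provided the divergence holds for $\lm^{n(m-1)}$-a.e. slice parameter $(\bx_1,\ldots,\bx_{m-1})$. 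The mean
\[
\mathbb{E}[\tilde\psi(\bq)]=\mathbb{E}[\min(1,\psi(\bq)/P)]\asymp\psi(\bq)\log^{m-1}(1/\psi(\bq)),
\]
where $P=\prod_{j<m}Y_j$ with $Y_j$ i.i.d.\ uniform on $[0,1/2]$, is computed from the $\Gamma(m-1,1)$ density of $-\log P$; hence by hypothesis $\mathbb{E}[\sum_{\bq}\tilde\psi(\bq)]=\infty$.

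The \emph{main obstacle} I foresee is upgrading this expectation-level divergence to an almost-sure divergence of $\sum_{\bq}\tilde\psi(\bq)$. I would handle this by a second-moment / Paley--Zygmund argument applied to the partial sums $S_N:=\sum_{|\bq|\le N}\tilde\psi(\bq)$: the central estimate is the quasi-independence bound $\mathbb{E}[\tilde\psi(\bq_1)\tilde\psi(\bq_2)]\ll\mathbb{E}[\tilde\psi(\bq_1)]\mathbb{E}[\tilde\psi(\bq_2)]$ for generic pairs $\bq_1\ne\bq_2$, reducing to standard overlap estimates on $\|\bq_1\bx\|\cdot\|\bq_2\bx\|$ via gcd / linear-form considerations, with the `bad' pair contributions (small $\gcd(\bq_1,\bq_2)$ or near-collinear $\bq_1,\bq_2$) estimated separately and shown to be of strictly lower order than $\mathbb{E}[S_N]^2$. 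The border case $n=1$, in which Theorem~\ref{t:BV} demands a monotonicity that the random slice function $\tilde\psi$ need not satisfy, is already covered by the result of Hussain and Simmons quoted in the introduction, so the new content of Theorem~\ref{t:mulm} in dimensions $n\ge2$ is delivered cleanly by the slicing approach above.
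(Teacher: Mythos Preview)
Your convergence argument is essentially the paper's: the paper uses a measure-preserving map $T_{\bq}:[0,1]^{nm}\to[0,1]^m$ to reduce the computation of $\lm^{nm}(\cm(\bq,\delta))$ to the $m$-dimensional volume $\asymp\delta\log^{m-1}(1/\delta)$, then applies Borel--Cantelli. Your Fubini/independence computation does the same thing.

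For divergence, your route is \emph{genuinely different} from the paper's. The paper works directly on the sets $\cm'(\bq,\psi(\bq))$: it decomposes each $\cm'(\bq,\delta)$ into a union of $\asymp\log^{m-1}(1/\delta)$ dyadic ``rectangles'' $R'(\bq,2^{-\bk})$ (Lemma~\ref{l:rectangledecom}), invokes the known overlap bound $\lm^{nm}(R'(\bq_1,\cdot)\cap R'(\bq_2,\cdot))\ll\prod_j\cdot$ from the weighted theory (Lemma~\ref{l:mea}), and sums over $\bk,\bl$ to obtain quasi-independence of $\cm'(\bq_1,\cdot)$ and $\cm'(\bq_2,\cdot)$; Lamperti's lemma then gives positive measure and the zero-one law (Theorem~\ref{l:reducem}) upgrades it to full measure. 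Your slicing reduction to $W(n,1;\tilde\psi)$ is elegant in that it recycles the non-monotone Khintchine--Groshev theorem, but the entire difficulty has been pushed into the single unproved assertion
\[
\mathbb{E}\big[\tilde\psi(\bq_1)\tilde\psi(\bq_2)\big]\ll \mathbb{E}\big[\tilde\psi(\bq_1)\big]\,\mathbb{E}\big[\tilde\psi(\bq_2)\big].
\]
This is the genuine gap. Writing $\tilde\psi(\bq_i)=\int_0^1\mathbf 1\{P_i<\psi(\bq_i)/s_i\}\,ds_i$ with $P_i=\prod_{j<m}\|\bq_i\bx_j\|$ and applying Fubini, your correlation bound is equivalent to an overlap estimate for $\cm(\bq_1,\cdot)\cap\cm(\bq_2,\cdot)$ in dimension $n(m-1)$ --- i.e.\ exactly the paper's Lemma~\ref{l:quasimeam} one dimension down. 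The ``standard overlap estimates on $\|\bq_1\bx\|\cdot\|\bq_2\bx\|$'' you invoke are the estimates for the \emph{rectangles} $R'(\bq,\cdot)$, not for the multiplicative region; the passage from one to the other is precisely the dyadic decomposition that constitutes the paper's main idea. So your reduction has not simplified the problem, only relocated it.

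Two smaller points. First, Paley--Zygmund yields only $\mathbb{P}(\sum_{\bq}\tilde\psi(\bq)=\infty)>0$, not the almost-sure statement you claim; you would still need the zero-one law for $\cm^\times$ (Theorem~\ref{l:reducem}) to finish, just as the paper does. Second, the $n=1$ Lebesgue case is Gallagher, not Hussain--Simmons (who treat Hausdorff measure).
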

\begin{rem}
It was conjectured in \cite[Conjecture 4.1]{HS18JNT} that the Lebesgue measure of $\cm^\times(n,m;\psi)$ depends on the convergence or divergence of the series
	\[\sum_{\bq\in\Z^n}\psi(\bq)^{n}\log^{m-1}(\psi(\bq)^{-1})\footnote{In the presence of monotonicity, it is safe to replace $\log^{m-1}(\psi(\bq)^{-1})$ with $\log^{m-1}(|\bq|)$.
%		The author believes that there is a typo in this formula, as taking $n=2$ and $m=1$ would contradict Theorem~\ref{t:BV}.
		}.\]
	However, our theorem shows that this is not the case.
\end{rem}
\begin{thm}[{\cite{HS18JNT,Zh19}}]\label{t:mulh}
	Let $n,m>1$. Let $\psi:\Z^n\to\R^+$ be an approximating function. Let $f$ be a dimension function such that $(n-1)m\prec f\preceq (nm-1+s)$ for some $0<s<1$. Suppose that $\psi$ is non-increasing. Then,
	\[\hm^f\big(\cm^\times(n,m;\psi)\big)=\begin{dcases}
		0&\text{ if $\sum_{\bq\in\Z^n\setminus\{0\}}f\bigg(\frac{\psi(\bq)}{|\bq|}\bigg)\bigg(\frac{\psi(\bq)}{|\bq|}\bigg)^{1-nm}|\bq|<\infty$},\\
		\hm^f([0,1]^{nm})&\text{ if $\sum_{\bq\in\Z^n\setminus\{0\}}f\bigg(\frac{\psi(\bq)}{|\bq|}\bigg)\bigg(\frac{\psi(\bq)}{|\bq|}\bigg)^{1-nm}|\bq|=\infty$}.
	\end{dcases}\]
\end{thm}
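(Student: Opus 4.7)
My plan is to reduce both directions of the dichotomy to Theorem \ref{t:main}, in line with the paper's observation that $\cm^\times(n,m;\psi)$ is essentially a union of weighted sets $W(n,m;\Psi)$. Set $E_\bq := \{\bx \in [0,1]^{nm} : \prod_j \|\bq \bx_j\| < \psi(\bq)\}$, so that $\cm^\times(n,m;\psi) = \limsup_{|\bq|\to\infty} E_\bq$.

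\textbf{Convergence direction.} I would produce an upper bound on $\hm^f_\infty(E_\bq)$ by a dyadic covering. Decompose $E_\bq \subseteq \bigcup_\bt E_{\bq,\bt}$, where $\bt \in \Z_{\ge 0}^m$ satisfies $\sum_j t_j \ge \lfloor\log_2\psi(\bq)^{-1}\rfloor - O(1)$ and
\[E_{\bq,\bt} = \{\bx : 2^{-t_j-1} \le \|\bq \bx_j\| < 2^{-t_j}\ \text{for all}\ j\}.\]
Each $E_{\bq,\bt}$ is a union of $\asymp |\bq|^m$ hyperrectangles with $m$ short sides of length $2^{-t_j}/|\bq|$ and $(n-1)m$ sides of order $1$. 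I would apply Proposition \ref{p:rec} to compute the Hausdorff $f$-content of each such rectangle, then sum over $\bt$; the latter is a geometric-type sum, and the hypothesis $f \preceq nm-1+s$ with $s > 0$ supplies the strictly positive decay exponent that prevents a logarithmic blow-up, yielding
\[\hm^f_\infty(E_\bq) \lesssim f(\psi(\bq)/|\bq|)\,(\psi(\bq)/|\bq|)^{1-nm}\,|\bq|.\]
The Hausdorff--Cantelli lemma then closes this direction under the convergence hypothesis.

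\textbf{Divergence direction.} I would take $\Psi := (\psi, 1, \ldots, 1)$; then $\prod_j \psi_j = \psi$ and the trivial bound $\|\bq \bx_j\| \le 1$ gives $W(n,m;\Psi) \subseteq \cm^\times(n,m;\psi)$. Apply Theorem \ref{t:main} to this $\Psi$: the Hausdorff $f$-measure of $W(n,m;\Psi)$ is full iff $\sum_\bq t_\bq(\Psi, f)|\bq|^m = \infty$. Unpacking the $\min$-over-$i$ formula yields
\[t_\bq(\Psi, f)\,|\bq|^m = \min\bigl\{A_\bq,\, B_\bq\bigr\},\]
where $A_\bq := f(\psi/|\bq|)(\psi/|\bq|)^{1-nm}|\bq|$ (precisely the summand in the hypothesis of Theorem \ref{t:mulh}) and $B_\bq := f(1/|\bq|)|\bq|^{nm}$. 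The two-sided comparison $A_\bq/B_\bq \in [\psi^s, \psi^{1-m}]$, supplied respectively by $(n-1)m \prec f$ and $f \preceq nm-1+s$, governs the case split. In the ``high-order'' regime where $A_\bq \le B_\bq$, we have $\sum_\bq \min = \sum_\bq A_\bq$, which matches the hypothesis. In the complementary ``low-order'' regime where $A_\bq > B_\bq$, the bound $f(1/|\bq|) \ge c\,|\bq|^{-(nm-1+s)}$ gives $\sum_\bq B_\bq \gtrsim \sum_\bq |\bq|^{1-s} = \infty$ unconditionally (here $n,m>1$ ensures enough mass in the lattice sum), so $\sum_\bq\min = \infty$ as well. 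Either way the Theorem \ref{t:main} series diverges, hence $\hm^f(W(n,m;\Psi)) = \hm^f([0,1]^{nm})$, and the inclusion yields the desired conclusion.

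\textbf{The hardest step} is the rectangle-content computation in the convergence direction, where Proposition \ref{p:rec} must be applied carefully to anisotropic rectangles and the geometric sum over the dyadic tuples $\bt$ must be tightened so that no logarithmic factor survives. The strict positivity $s > 0$ is indispensable there: it provides the positive exponent that telescopes the dyadic sum down to the clean monomial bound. The divergence direction is conceptually simpler but requires the mild case analysis described above to handle the minimum cleanly.
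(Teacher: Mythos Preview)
Your overall strategy matches the paper's on both halves: the convergence direction via dyadic decomposition of $E_\bq$ into rectangles followed by a geometric sum over the dyadic tuples is exactly Lemma~\ref{l:mulplicativecover} (the paper covers each $R(\bq,2^{-\bk})$ by balls of the smallest side rather than invoking Proposition~\ref{p:rec}, but the arithmetic is the same, and the role of $s<1$ in killing the logarithm is precisely \eqref{eq:logterm}); the divergence direction via the inclusion $W(n,m;\Psi)\subset\cm^\times(n,m;\psi)$ with $\Psi=(1,\ldots,1,\psi)$ followed by Theorem~\ref{t:main} is exactly Section~\ref{ss:divergencem}.

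There is, however, a genuine gap in your divergence argument. Your case split into a ``high-order regime'' ($A_\bq\le B_\bq$) and a ``low-order regime'' ($A_\bq>B_\bq$) is written as if exactly one alternative holds for \emph{all} $\bq$ simultaneously, so that $\sum_\bq\min\{A_\bq,B_\bq\}$ collapses to either $\sum_\bq A_\bq$ or $\sum_\bq B_\bq$. But nothing in the hypotheses $(n-1)m\prec f\preceq(nm-1+s)$ forces this: the sign of $A_\bq-B_\bq$ is governed by the monotonicity of $g(r):=f(r)r^{1-nm}$, and $g$ need not be monotone on the whole range. Knowing $\sum_\bq A_\bq=\infty$ and $\sum_\bq B_\bq=\infty$ separately does \emph{not} imply $\sum_\bq\min\{A_\bq,B_\bq\}=\infty$, so the argument as written does not close. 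The paper avoids this by asserting $(nm-1)\preceq f$, which makes $g$ non-decreasing and hence $A_\bq\le B_\bq$ for every $\bq$, yielding $t_\bq(\Psi,f)|\bq|^m=A_\bq$ directly with no case split. If you prefer to keep your two-regime structure, note that in order to invoke Theorem~\ref{t:main} at all you must place $f$ in some window $(nm-a)\preceq f\preceq(nm-a+1)$; that condition forces $g$ to be monotone (non-decreasing when $a=1$, non-increasing when $a\ge2$), and \emph{then} your dichotomy is global and both branches go through. Either way you should make the verification of Theorem~\ref{t:main}'s hypotheses explicit rather than silent.
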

\begin{rem}\label{r:haum}
	 In \cite{HS18JNT}, the author proved the convergence part of the theorem and conjectured that the complementary divergence part should hold, which was later affirmatively answered by Zhang \cite{Zh19}. For the sake of completeness, we provide the proof here. We emphasize that the divergence part is, in fact, a consequence of the weighted setup (see Theorem \ref{t:main}). Let $\Psi=(\psi_1,\dots,\psi_m)$ be defined by $\psi_1=\cdots=\psi_{m-1}\equiv1$ and $\psi_m=\psi$. It will be shown in Section \ref{ss:divergencem} that $W(n,m;\Psi)\subset \cm^\times(n,m;\psi)$, and the set $W(n,m;\Psi)$ will be of full Hausdorff $f$-measure provided the series in the theorem diverges.
\end{rem}

The condition $f\preceq (nm-1+s)$ is crucial for eliminating an undesirable log term in the estimate of the upper bound of the Hausdorff $f$-measure (see Lemma \ref{l:mulplicativecover}). However, Theorem \ref{t:mulh} does not apply to the dimension function $f(r)=r^{nm}\log(1/r)$, it is therefore desirable to calculate the Hausdorff $f$-measure of $\cm^\times(n,m;\psi)$ for this case. It turns out that the $\log$ term would appear naturally, in line with the Lebesgue measure result described in Theorem \ref{t:mulm}.
\begin{thm}\label{t:mulhs}
	Let $n,m>1$. Let $\psi:\Z^n\to\R^+$ be an approximating function. Let $(nm-1)\preceq f\prec nm$ be a dimension function such that for any $0<t<1$ there exists $r_0=r_0(t)$ for which
	\begin{equation}\label{eq:fasy}
		\frac{f(\alpha r)}{f(r)}\asymp \alpha^{nm},\quad \text{for all $1<\alpha<r^{-t}$ and $\alpha r<r_0$},
	\end{equation}
	where the implied constant may depend on $t$.
	Suppose that $\psi$ is non-increasing. Then,
	\[\begin{split}
		&\hm^f\big(\cm^\times(n,m;\psi)\big)\\=&\begin{dcases}
			0&\text{ if $\sum_{\bq\in\Z^n\setminus\{0\}}f\bigg(\frac{\psi(\bq)}{|\bq|}\bigg)\bigg(\frac{\psi(\bq)}{|\bq|}\bigg)^{1-nm}|\bq|\log^{m-1}(\psi(\bq)^{-1})<\infty$},\\
			\hm^f([0,1]^{nm})&\text{ if $\sum_{\bq\in\Z^n\setminus\{0\}}f\bigg(\frac{\psi(\bq)}{|\bq|}\bigg)\bigg(\frac{\psi(\bq)}{|\bq|}\bigg)^{1-nm}|\bq|\log^{m-1}(\psi(\bq)^{-1})=\infty$}.
		\end{dcases}
	\end{split}\]
\end{thm}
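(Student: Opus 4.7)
The argument splits into the convergence and divergence parts, both starting from the natural covering $\cm^\times(n,m;\psi)\subseteq \bigcup_{|\bq|\ge Q}M_\bq$, where $M_\bq=\{\bx\in[0,1]^{nm}:\prod_j\|\bq\bx_j\|<\psi(\bq)\}$.

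For the convergence part, I would decompose $M_\bq$ dyadically: for each $\bk\in\Z_{\ge 0}^m$ with $\sum_jk_j\ge\lceil\log_2\psi(\bq)^{-1}\rceil$, the dyadic level $\bk$ contributes a union (over $\bp\in\Z^m$) of hyperplane-strip products whose $j$-th block has one short side $a_j\asymp 2^{-k_j}/|\bq|$ and $n-1$ long sides $\asymp 1$. Under the asymptotic homogeneity \eqref{eq:fasy}, the optimal cover of each such strip by cubes of side $a_{\min}=\min_j a_j$ gives Hausdorff $f$-content $\asymp\log(1/a_{\min})\prod_j a_j$. Summing over the $\asymp|\bq|^m$ choices of $\bp$ and then over $\bk$ (noting $\#\{\bk:\sum_jk_j=K\}\asymp K^{m-1}$ and that the geometric decay in $K$ concentrates the sum at $K=\lceil\log_2\psi(\bq)^{-1}\rceil$) produces $\hm^f_\infty(M_\bq)\ll\psi(\bq)\log(|\bq|/\psi(\bq))\log^{m-1}(\psi(\bq)^{-1})$, which matches the summand. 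Convergence of the series then gives $\hm^f(\cm^\times(n,m;\psi))=0$.

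For the divergence part, in the spirit of Remark \ref{r:haum}, I would like to exhibit a weighted $\Psi$ with $W(n,m;\Psi)\subseteq\cm^\times(n,m;\psi)$ to which Theorem \ref{t:main} applies. However, the naive choice $\psi_1=\cdots=\psi_{m-1}=1$, $\psi_m=\psi$ yields $t_\bq(\Psi,f)|\bq|^m\asymp\psi(\bq)\log(|\bq|/\psi(\bq))$, short by exactly the factor $\log^{m-1}(\psi(\bq)^{-1})$. Heuristically, the missing factor should come from the combinatorial count of $\asymp\log^{m-1}(\psi(\bq)^{-1})$ admissible dyadic decompositions $\bk(\bq)$ of $\lceil\log_2\psi(\bq)^{-1}\rceil$ into $m$ nonnegative parts: each induces a weighted setup $\Psi_\bk$ with $\prod_j\psi_{\bk,j}(\bq)\le\psi(\bq)$, and hence $W(n,m;\Psi_\bk)\subseteq\cm^\times(n,m;\psi)$.

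The main obstacle is bridging this combinatorial gap. Theorem \ref{t:main} applies to a single weighted setup obeying condition (1) or (2), neither of which readily accommodates a decomposition $\bk(\bq)$ varying within a single $|\bq|$-level: condition (1) forces $\psi_j$ to depend only on $|\bq|$, while condition (2) imposes coordinatewise monotonicity. A plausible workaround is to prove a direct multiplicative mass transference principle exploiting the sharpness in both directions of the content estimate $\hm^f_\infty(M_\bq)\asymp\psi(\bq)\log(|\bq|/\psi(\bq))\log^{m-1}(\psi(\bq)^{-1})$ from the convergence argument: since this content already carries the $\log^{m-1}$ factor, a ubiquity-style local density argument on the family $\{M_\bq\}$ should transfer it to full $\hm^f$-measure on $[0,1]^{nm}$. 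Throughout, the asymptotic condition \eqref{eq:fasy} is essential, as it is what makes the strip content take the clean form $\log(1/a_{\min})\prod_j a_j$ rather than a more involved Falconer-type singular value expression.
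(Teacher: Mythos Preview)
Your convergence sketch is essentially the paper's argument (Lemma~\ref{l:mulplicativecover}), but your content formula $\log(1/a_{\min})\prod_j a_j$ is only valid for the specific function $f(r)=r^{nm}\log(1/r)$: for general $f$ satisfying~\eqref{eq:fasy} the quantity $f(a_{\min})a_{\min}^{-nm}$ is merely slowly varying, not equal to $\log(1/a_{\min})$. The correct bound, which the paper obtains using only $f\preceq nm$ (not~\eqref{eq:fasy}), is $f(\psi(\bq)/|\bq|)(\psi(\bq)/|\bq|)^{1-nm}|\bq|\log^{m-1}(\psi(\bq)^{-1})$, i.e.\ exactly the summand. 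This is a minor correction; your strategy is right.

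The divergence part, however, has a genuine gap. You correctly diagnose that the naive weighted embedding loses the factor $\log^{m-1}(\psi(\bq)^{-1})$, and you are right that Theorem~\ref{t:main} cannot absorb a $\bq$-dependent choice of dyadic profile $\bk(\bq)$. But your proposed workaround --- a ``direct multiplicative mass transference principle'' via a ubiquity argument on $\{M_\bq\}$ --- is not a proof: no known MTP takes star-shaped sets like $M_\bq$ as input, and a two-sided content estimate for $M_\bq$ does not by itself supply the full-Lebesgue-measure ball system that Theorem~\ref{t:weaken} requires.

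The missing idea is to \emph{inflate} $\psi$ to an auxiliary function $\phi$ chosen so that (i) the Lebesgue-level criterion (Theorem~\ref{t:mulm}) gives $\lm^{nm}(\cm^\times(n,m;\phi))=1$, and (ii) each local piece of $\cm(\bq,\phi(\bq))$, when intersected with $\cm(\bq,\psi(\bq))$, still has $f$-content comparable to its Lebesgue measure. Concretely, the paper sets $\phi(\bq)=f(\psi(\bq)/|\bq|)(\psi(\bq)/|\bq|)^{1-nm}|\bq|$; using~\eqref{eq:fasy} one shows $\psi(\bq)\le\phi(\bq)\ll\psi(\bq)^{1/2}$, so the divergence hypothesis transfers to $\sum\phi(\bq)\log^{m-1}(\phi(\bq)^{-1})=\infty$. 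Then $\cm^\times(n,m;\phi)$ is decomposed dyadically into rectangles $R(\bq,2^{-\bk})$, $\bk\in\ca_m(\phi(\bq))$, each covered by balls $B$ of radius $2^{-k_{\max}}/|\bq|$, yielding a full-measure limsup of balls. The crucial local estimate is $\hc^f(B\cap\cm(\bq,\psi(\bq)))\gg|B|^{nm}$, proved by finding inside $B\cap\cm(\bq,\psi(\bq))$ a hyperrectangle with one side shortened to $2^{k_1+\cdots+k_{m-1}}\psi(\bq)/|\bq|$ and applying Proposition~\ref{p:rec} together with~\eqref{eq:fasy}; this is precisely where the asymptotic homogeneity is used, and the definition of $\phi$ is exactly what makes the exponents balance. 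Theorem~\ref{t:weaken} then finishes the argument. Without this inflation-and-localize step, your plan does not close.
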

\begin{rem}
	It is easily verified that $f(r)=r^{nm}\log(1/r)$ satisfies \eqref{eq:fasy}. Due to the presence of the extra $\log$ term, the strategy for obtaining the lower bound for $\hm^f(\cm^\times(n,m;\psi))$ described in Remark \ref{r:haum} is no longer applicable. To bypass it, similar to the proof in the weighted setup, the main idea is to bring the set $\cm^\times(n,m;\psi)$ into the framework of the `balls-to-rectangles' mass transference principle.
\end{rem}

\subsection{Fourier dimension and (non-)Salem sets}
The classical result due to Frostman provides a fundamental way to bound the Hausdorff dimension of a Borel set $E\subset\R^d$, which can also be expressed as the existence of finiteness of $s$-energy:
\[\hdim E=\sup\{s:\exists\mu\in\cp(E)\text{ such that }I_s(\mu)<\infty\},\]
where $\cp(E)$ is the set of all probability measures on $\R^d$ for which $\supp(\mu)$ is a compact subset of $E$, and $I_s(\mu)$ is the $s$-energy of $\mu$ defined by
\[I_s(\mu)=\int\int|\bx-\by|^{-s}\dif\mu(\bx)\dif\mu(\by).\]
Note that the inner integral can be written as the convolution
\[\int|\bx-\by|^{-s}\dif\mu(\bx)=h_s\ast\mu(\by)\quad\text{with $h_s(\bx)=|\bx|^{-s}$}.\]
This leads to a far from being trivial connection between the energy (and thus the Hausdorff dimension) and the Fourier transform:
\[I_s(\mu)\asymp \int_{\R^d}|\widehat{\mu}(\bx)|^2|\bx|^{s-d}\dif\bx,\]
where the implied constant depends on $d$ and $s$ only, and
\[\widehat{\mu}(\bx):=\int_{\R^d} e^{-2\pi \ic \bm\xi\cdot\bx}\dif\bm\xi.\]
Here, $\bm\xi\cdot\bx=\sum_{i=1}^d\xi_ix_i$ is the standard Euclidean inner product. This motivates the definition:
\begin{defn}[Fourier dimension and Salem set]
	For $E\subset \R^d$, the Fourier dimension of $E$ is defined by
	\[\fdim E:=\sup\{s\in[0,d]:\exists\mu\in\cp(E)\text{ such that }|\widehat{\mu}(\bx)|\ll |\bx|^{-s/2}\}.\]
	We say that $E$ is a Salem set if $\fdim E=\hdim E$.
\end{defn}
It follows from the above discussion that \[\fdim E\le \hdim E.\]
Hausdorff and Fourier dimensions are generally different. For example, every $k$-dimensional ($k<d$) hyperplane  has Hausdorff dimension $k$ and Fourier
dimension 0. The middle-third Cantor set has Hausdorff dimension $\log 2/ \log 3$ and Fourier dimension $0$.

Every ball in $\R^d$ is a Salem set of dimension $d$. Less trivially, every sphere in $\R^d$ is a Salem set of dimension $d-1$. The existence of Salem sets in $\R^d$ with dimension $s\ne0, d-1,d$ are more complicated. Historically, Salem sets are abundant in the random construction of sets. The first such random construction is due to Salem himself \cite{Sa51ark}; many subsequent
random constructions have appeared in \cite{Bl96ark,CS17can,Ek16,Ka66,LP09gafa,Mu18jfaa,SX06stu}. The first explicit construction of a Salem set results from a breakthrough by Kaufman \cite{Ka81}, who showed that $W(1,1;q\mapsto q^{-\tau})$ is a Salem set of dimension $2/(1+\tau)$ when $\tau>1$. The explicit examples of Salem sets in higher dimensions $\R^d$ were recently given by Hambrook \cite{Ha17} and Fraser and Hambrook \cite{FH23}, completely resolving a problem proposed by Kahane more than 50 years ago. It should be emphasized that the sets studied in \cite{Ha17,FH23} are quite different from the sets $W(n,m;\psi)$ studied in this paper. Instead, it was shown by Hambrook and Yu \cite{HY22}, and later improved by Cai and Hambrook \cite{CH24}, that the set $W(n,m;\psi)$ is far from being a Salem set unless $n=m=1$.
\begin{thm}[{\cite[Theorem 1.1]{HY22} and \cite[Theorem 1.4.1]{CH24}}]\label{t:fdim}
	Let $\psi:\Z^n\to\R_{\ge 0}$ be a multivariable approximating function (not necessarily monotonic). If $\sum_{\bq\in\Z^n}\psi(\bq)^m<\infty$, then
	\[\fdim W(n,m;\psi)=2s(\psi),\]
	where
	\[s(\psi)=\inf\bigg\{s\ge 0:\sum_{\bq\in\Z^n\setminus\{0\}}\bigg(\frac{\psi(\bq)}{|\bq|}\bigg)^s<\infty\bigg\}.\]
	In particular, $W(n,m;\psi)$ is not Salem unless $n=m=1$.
\end{thm}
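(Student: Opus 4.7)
The plan is to prove the two inequalities $\fdim W(n,m;\psi) \le 2s(\psi)$ and $\fdim W(n,m;\psi) \ge 2s(\psi)$ separately, since they call for qualitatively different arguments.

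For the upper bound, suppose $\mu \in \cp(W(n,m;\psi))$ satisfies $|\widehat{\mu}(\bm\xi)| \ll |\bm\xi|^{-s/2}$. The strategy is to exploit the covering $W(n,m;\psi) \subset \limsup_{\bq} A_\bq$ with $A_\bq := \{\bx \in [0,1]^{nm} : \|\bq\bx_j\| < \psi(\bq)\text{ for every } 1\le j\le m\}$. Since $\sum_{\bq} \chi_{A_\bq}(\bx) = \infty$ for every $\bx \in \supp\mu$, Fubini forces $\sum_\bq \mu(A_\bq) = \infty$. On the other hand, expanding $\chi_{A_\bq}$ into a multi-dimensional Fourier series concentrated on the lattice $\bq\Z \times \cdots \times \bq\Z$ produces
\[\mu(A_\bq) = (2\psi(\bq))^m + \sum_{\bk \in \Z^m \setminus \{0\}} \prod_{j=1}^m \widehat{\chi}_{\psi(\bq)}(k_j)\, \widehat\mu(k_1\bq,\ldots,k_m\bq),\]
with $|\widehat{\chi}_\psi(k)| \ll \min(\psi, 1/|k|)$. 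The hypothesis $\sum \psi(\bq)^m < \infty$ makes the main term summable, so the error must diverge. Invoking $|\widehat\mu(k_1\bq,\ldots,k_m\bq)| \ll (|\bq|\max_j|k_j|)^{-s/2}$ and summing $\bk$ dyadically yields an error of order $(\psi(\bq)/|\bq|)^{s/2}$; divergence of $\sum_\bq (\psi(\bq)/|\bq|)^{s/2}$ then forces $s \le 2s(\psi)$ by the very definition of $s(\psi)$.

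For the lower bound, given $s < 2s(\psi)$ one must construct some $\mu \in \cp(W(n,m;\psi))$ with $|\widehat\mu(\bm\xi)| \ll |\bm\xi|^{-s/2}$. Following Kaufman's paradigm, I would take a weak-$*$ subsequential limit of the dyadic averages
\[\mu_Q = \frac{1}{Z_Q}\sum_{\substack{\bq \in \Z^n \\ Q \le |\bq| < 2Q}} \sum_{\bp \in \Z^{nm}} \prod_{j=1}^m \phi\!\left(\frac{\bq\cdot\bx_j - p_j}{\psi(\bq)}\right),\]
where $\phi$ is a fixed smooth bump and $Z_Q$ is the appropriate normalization. A direct Fourier computation decomposes $\widehat{\mu_Q}(\bm\xi)$ into a harmless diagonal term plus off-diagonal exponential sums over $\bq$ and $\bp$, and controlling the latter with Weil-type bounds (and their higher-dimensional lattice refinements due to Hambrook~\cite{Ha17} and Fraser--Hambrook~\cite{FH23}) yields the desired Fourier decay.

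The main obstacle is precisely this lower-bound construction once $n+m>2$: Kaufman's one-dimensional exponential sum estimates no longer apply, and one must invoke the substantially more technical higher-dimensional machinery of \cite{Ha17,FH23} adapted to the Diophantine setting as in \cite{HY22,CH24}. The assertion that $W(n,m;\psi)$ is non-Salem outside the case $n=m=1$ then follows by comparing $2s(\psi)$ with the Hausdorff dimension given by Theorem~\ref{t:conAB}; these two quantities agree only when $nm=1$.
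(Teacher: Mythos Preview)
The paper does not prove this theorem: it is quoted verbatim from \cite{HY22} and \cite{CH24} and used as a black box (most notably to supply the lower bound in Section~\ref{s:Fourierpart} for the weighted Theorem~\ref{t:fdimW}). So there is no ``paper's own proof'' to compare against.

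That said, your sketch is a faithful outline of what happens in those references and is consistent with the paper's treatment of the weighted generalization. For the upper bound, your Fourier-expansion/Borel--Cantelli argument is exactly the mechanism behind Lemma~\ref{l:meaupp} and the proof of the upper bound of Theorem~\ref{t:fdimW} in Section~\ref{s:Fourierpart}: the paper replaces the raw characteristic function $\chi_{A_\bq}$ by a smooth majorant $\phi_{\gamma}\ast\lm_{\bq,j}$ (so that the Fourier series is compactly supported in frequency via Lemma~\ref{l:fouriersurface}), but the resulting estimate and the contradiction via $\sum_\bq \mu(A_\bq)<\infty$ are the same. One caveat: your claim that the off-diagonal sum is $\ll (\psi(\bq)/|\bq|)^{s/2}$ after dyadic summation implicitly uses $s/2<1$; the paper's proof of Theorem~\ref{t:fdimW} imposes exactly this restriction (see \eqref{eq:tec}), and the unrestricted statement for the non-weighted case is precisely the contribution of \cite{CH24} over \cite{HY22}. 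For the lower bound, your Kaufman-type averaging construction and appeal to the exponential-sum machinery of \cite{Ha17,FH23} is indeed the route taken in \cite{HY22,CH24}; the paper itself bypasses this entirely by invoking Theorem~\ref{t:fdim} directly.
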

	Our first theorem in this subsection concerns the following weighted generalization of the result above.
\begin{thm}\label{t:fdimW}
	Let $\Psi=(\psi_1,\dots,\psi_m)$ be an $m$-tuple of multivariable approximating functions with  $\psi_j:\Z^n\to\R_{\ge 0}$ for $1\le j\le m$ (not necessarily monotonic). Suppose that $\sum_{\bq\in\Z^n}\prod_{j=1}^{m}\psi_j(\bq)<\infty$ and that
	\begin{equation}\label{eq:tec}
		1>s(\Psi):=\inf\bigg\{s\ge 0:\sum_{\bq\in\Z^n\setminus\{0\}}\bigg(\min_{1\le j\le m}\frac{\psi_j(\bq)}{|\bq|}\bigg)^s<\infty\bigg\}.
	\end{equation}
	Then,
	\[\fdim W(n,m;\Psi)=2s(\Psi).\]
\end{thm}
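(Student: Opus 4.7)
The plan is to handle the two inequalities separately; both reduce to Theorem \ref{t:fdim}, though through very different mechanisms.

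\textbf{Lower bound.} Set $\bar\psi(\bq):=\min_{1\le j\le m}\psi_j(\bq)$, regarded as a multivariable approximating function. Since $\bar\psi(\bq)^m\le\prod_{j=1}^m\psi_j(\bq)$, the convergence hypothesis forces $\sum_\bq\bar\psi(\bq)^m<\infty$, so Theorem \ref{t:fdim} applies to $\bar\psi$ and yields $\fdim W(n,m;\bar\psi)=2s(\bar\psi)$. By the very definition, $s(\bar\psi)=s(\Psi)$, and the trivial inclusion $W(n,m;\bar\psi)\subset W(n,m;\Psi)$ together with the monotonicity of Fourier dimension under inclusion (any probability measure supported on the smaller set is one supported on the larger) gives $\fdim W(n,m;\Psi)\ge 2s(\Psi)$.

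\textbf{Upper bound.} Fix $s\in(2s(\Psi),2)$ (we may assume $s<2$ since $2s(\Psi)<2$ by \eqref{eq:tec}) and suppose, for contradiction, that some $\mu\in\cp(W(n,m;\Psi))$ satisfies $|\widehat\mu(\bm\xi)|\ll|\bm\xi|^{-s/2}$. I would adapt the Fourier-analytic scheme used in \cite{HY22,CH24} for the homogeneous case. Writing
\[
E_\bq:=\{\bx\in[0,1]^{nm}:\|\bq\cdot\bx_j\|<\psi_j(\bq),\ 1\le j\le m\},
\]
one has $\mathbf{1}_{E_\bq}(\bx)=\prod_{j=1}^m h_{\bq,j}(\bq\cdot\bx_j)$, where $h_{\bq,j}$ is the $1$-periodic indicator of $(-\psi_j(\bq),\psi_j(\bq))$, whose Fourier coefficients satisfy $|\widehat h_{\bq,j}(k)|\le\min\{2\psi_j(\bq),(\pi|k|)^{-1}\}$. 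Expanding each $h_{\bq,j}$ in Fourier series and integrating against $\mu$ gives
\[
\mu(E_\bq)=\sum_{\bk\in\Z^m}\Bigl(\prod_{j=1}^m\widehat h_{\bq,j}(k_j)\Bigr)\,\widehat\mu(-k_1\bq,\dots,-k_m\bq),
\]
and the frequencies on the right-hand side satisfy $|(-k_1\bq,\dots,-k_m\bq)|\asymp|\bq|\max_j|k_j|$, so the Fourier decay of $\mu$ is directly applicable. The zero-frequency term contributes $\prod_{j}2\psi_j(\bq)$, whose $\bq$-sum is finite by hypothesis. For each nonempty $S\subset\{1,\dots,m\}$, the contribution from indices $\bk$ with $\{j:k_j\ne 0\}=S$ factors as $\prod_{j\notin S}2\psi_j(\bq)$ times an inner multisum over $(k_j)_{j\in S}$. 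The prototypical case $|S|=1$ reduces to $\sum_{k\ne 0}\min\{2\psi_{j_0},(\pi|k|)^{-1}\}|k|^{-s/2}\asymp \psi_{j_0}(\bq)^{s/2}$, using $s/2<1$. Collecting terms, the bound takes the schematic form
\[
\mu(E_\bq)\ \ll\ \Bigl(\frac{\bar\psi(\bq)}{|\bq|}\Bigr)^{\!s/2}\!\!\cdot\!\!\prod_{j\ne j^*(\bq)}\!\psi_j(\bq)\ \le\ \Bigl(\frac{\bar\psi(\bq)}{|\bq|}\Bigr)^{\!s/2},
\]
where $j^*(\bq)$ attains $\min_j\psi_j(\bq)$ and the second inequality uses $\psi_j\le 1$. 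Since $W(n,m;\Psi)=\limsup_\bq E_\bq$ and $\mu(W(n,m;\Psi))=1$, the first Borel--Cantelli lemma forces $\sum_\bq\mu(E_\bq)=\infty$, hence $\sum_\bq(\bar\psi(\bq)/|\bq|)^{s/2}=\infty$, contradicting $s/2>s(\bar\psi)=s(\Psi)$.

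\textbf{Main obstacle.} The bulk of the work lies in the multi-index bookkeeping for sets $S$ with $|S|\ge 2$: one must verify that the cross terms arising when several $k_j$'s are simultaneously nonzero do not dominate the singleton-$S$ contribution, so that the minimum $\min_j\psi_j(\bq)$---rather than a geometric mean or the largest $\psi_j$---really governs the final bound. The assumption $s(\Psi)<1$ in \eqref{eq:tec} is essential here: it ensures that the relevant exponent $s/2$ can be kept strictly below $1$, which is precisely the range in which the inner Fourier sums evaluate to the clean geometric expressions above, and in which the cross terms remain controllable.
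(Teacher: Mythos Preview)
Your lower bound is identical to the paper's. For the upper bound you follow the same Borel--Cantelli-by-contradiction strategy, but implement the key estimate differently: you expand the periodic indicators $h_{\bq,j}$ directly in Fourier series, whereas the paper (Lemma~\ref{l:meaupp}) replaces $\cha_{R_j(\bq,\delta_j)}$ by a smooth majorant $\phi_{\gamma_j}\ast\lm_{\bq,j}$ whose Fourier transform is \emph{compactly supported}. That single device truncates the frequency sum to $|t_j|\le 2/\delta_j$ and then the crude inequality $|\bt|^{-s}\le\min\{1,|t_{j^*}|^{-s}\}$ (with $j^*$ the index of the smallest $\delta_j$) collapses all cross terms in one line, giving $\mu(E_\bq)\ll\prod_j\psi_j(\bq)+(\bar\psi(\bq)/|\bq|)^{s/2}$ immediately. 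Your route is sound and will eventually yield the same bound, but the ``main obstacle'' you flag---controlling the contributions from $|S|\ge 2$---is genuinely tedious: the soft cutoff $\min\{\psi_j,|k_j|^{-1}\}$ forces a case split according to which $|k_j|$ lie below or above $1/\psi_j$, and stray logarithmic factors appear (from the regime $|k_j|>1/\psi_j$) that must be absorbed by shrinking $s$. A secondary point: since $\sum_k|\widehat h_{\bq,j}(k)|=\infty$, the interchange of summation and integration against $\mu$ in your displayed identity for $\mu(E_\bq)$ needs justification (e.g.\ via Fej\'er means or a smooth majorant), which is again exactly what the paper's Schwartz-function trick handles for free.
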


Despite the technical condition \eqref{eq:tec} in the above statement, a phenomenon not presented by Theorem \ref{t:fdim} is as follows: Let $n=1$, $m=2$, $\psi_1(q) \equiv 1$ and $\psi_2(q) = q^{-2}$. Then, we have $W(1,2;\Psi) = [0,1] \times W(1,1;\psi_2)$, and
\[\frac{2}{3}=\fdim W(1,2;\Psi)=\fdim W(1,1;\psi_2),\]
which somewhat implies that the Fourier dimension of $W(1,2;\Psi)$ is `governed' by its `worst direction', namely the vertical direction. This is not accidental, as the Fourier dimension of product sets has a property quite different from that of Hausdorff dimension:
\[\fdim (E\times F)\ge\min\{\fdim E,\fdim F\},\]
and, as shown in \cite[Theorem 3.2]{Fr24}, the equality holds if both $E$ and $F$ are compact sets with zero Lebesgue measure (in the appropriate ambient dimension). Our next result shows that the equality indeed holds for all Borel sets.
\begin{thm}\label{t:product}
	Let $E\subset\R^{d-k}$ and $F\subset\R^k$ be two Borel sets with zero Lebesgue measure (in the appropriate ambient dimension). Then,
	\[\fdim(E\times F)=\min\{\fdim E,\fdim F\}.\]
\end{thm}
As a corollary, we obtain the Fourier dimension of the product of $\limsup$ sets.
\begin{cor}\label{c:prod}
	Let $\{W(n_\ell,m_\ell;\Psi_\ell)\}_{\ell=1}^k$ be a finite collection of $\limsup$ sets. Assume that for $1\le \ell\le k$, $W(n_\ell,m_\ell;\Psi_\ell)$ is of zero $(n_\ell m_\ell)$-dimensional Lebesgue measure. Then,
	\[\fdim\bigg(\prod_{\ell=1}^{k}W(n_\ell,m_\ell;\Psi_\ell)\bigg)=\min_{1\le \ell\le k}\fdim W(n_\ell,m_\ell;\Psi_\ell).\]
\end{cor}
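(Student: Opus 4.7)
The plan is a straightforward induction on $k$ that reduces the corollary to repeated applications of Theorem \ref{t:product}. Since Theorem \ref{t:product} is only available under a zero Lebesgue measure assumption on the product, the key bookkeeping point is to maintain this hypothesis at every step of the induction; I arrange this by always keeping a zero-measure factor inside the `left' part of the split.

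By permuting coordinate blocks, an operation that leaves the Fourier dimension of a Cartesian product invariant, I may assume that $W_1 := W(n_1,m_1;\Psi_1)$ is the factor whose $n_1m_1$-dimensional Lebesgue measure vanishes. For $1 \le j \le k$, set $A_j := \prod_{\ell=1}^{j} W(n_\ell,m_\ell;\Psi_\ell)$. Since $W_1$ is a factor of every $A_j$, Fubini yields $\lm^{\sum_{\ell\le j}n_\ell m_\ell}(A_j) = 0$ for each $j \ge 1$.

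I then prove by induction on $j$ the identity
\[
\fdim A_j = \min_{1 \le \ell \le j}\fdim W(n_\ell,m_\ell;\Psi_\ell).
\]
The case $j = 1$ is tautological. For the inductive step, write $A_j = A_{j-1} \times W(n_j,m_j;\Psi_j)$ and apply Theorem \ref{t:product} — whose zero-measure hypothesis is secured by the previous paragraph — to obtain
\[
\fdim A_j = \min\bigl\{\fdim A_{j-1},\,\fdim W(n_j,m_j;\Psi_j)\bigr\}.
\]
The inductive hypothesis $\fdim A_{j-1} = \min_{\ell < j}\fdim W(n_\ell,m_\ell;\Psi_\ell)$ then closes the induction; taking $j = k$ yields the statement.

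The only conceptual obstacle is to notice that the naive `split-in-half' strategy can fail when both halves carry positive Lebesgue measure, in which case Theorem \ref{t:product} no longer applies to the inner split. Peeling off one factor at a time, with the zero-measure factor $W_1$ always kept on the left, places every such split within the hypothesis of Theorem \ref{t:product} and makes the induction go through without any additional measure-theoretic input.
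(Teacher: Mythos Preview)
Your induction is correct and complete: keeping the null factor $W_1$ on the left guarantees that every partial product $A_{j-1}$ is Lebesgue-null, so Theorem \ref{t:product} applies at each peel-off step, and the identity $\fdim A_j = \min_{\ell\le j}\fdim W_\ell$ propagates cleanly.

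The paper argues differently. Rather than inducting, it makes a single split: it isolates the factor $W_i$ with \emph{smallest} Fourier dimension, observes that at least one of $W_i$ and $\prod_{\ell\ne i}W_\ell$ is null (since $W_1$ sits in one of them), applies Theorem \ref{t:product} once to obtain
\[
\fdim\Big(\prod_{\ell}W_\ell\Big)=\min\Big\{\fdim W_i,\ \fdim\Big(\prod_{\ell\ne i}W_\ell\Big)\Big\},
\]
and then concludes that this minimum is $\fdim W_i$ via the unconditional inequality $\fdim(E\times F)\ge\min\{\fdim E,\fdim F\}$ (iterated over the remaining factors). So the paper trades your $k-1$ invocations of Theorem \ref{t:product} for one invocation plus the easy product lower bound. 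Your route is slightly more self-contained, as it never appeals to that lower bound; the paper's is a one-shot argument but leans on an extra ingredient stated earlier in the text.
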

\begin{rem}
	It is worth noting that the result due to Wang and Wu \cite{WW24tams} implies that
	\[\hdim\bigg(\prod_{\ell=1}^{k}W(n_\ell,m_\ell;\Psi_\ell)\bigg)=\min_{1\le \ell\le k}\bigg\{\hdim W(n_\ell,m_\ell;\Psi_\ell)+\sum_{j\ne\ell}n_jm_j\bigg\}.\]
	Since $W(n,m;\Psi)\subset \prod_{j=1}^{m}W(n,1;\psi_j)$, the corollary provides insight into why the Fourier dimension of the weighted set is governed by its `worst direction'.
\end{rem}
For the multiplicative setup, although $\cm^\times(n,m;\psi)$ can be almost described as a union of $\limsup$ sets $W(n,m;\Psi)$ for appropriately chosen functions $\Psi$, its Fourier dimension is not an immediate consequence of Theorem \ref{t:fdimW}, since the Fourier dimension is not even finitely stable unless each set involved is closed. See \cite[Example 8]{EPS15jfg}\footnote{In \cite{EPS15jfg}, the authors introduced several definitions of Fourier dimension, and the definition they referred to as the `compact Fourier dimension' is consistent with the one used in the current work.} for more details. Nevertheless, this philosophy remains applicable. The following result generalizes the work of Tan and Zhou \cite{TZ24}, in which the case $n=1$ and $m=2$ was established.
\begin{thm}\label{t:fdimM}
	Let $\psi:\Z^n\to\R_{\ge 0}$ be a multivariable approximating function (not necessarily monotonic). If $\sum_{\bq\in\Z^n}\psi(\bq)\log^{m-1}(\psi(\bq)^{-1})<\infty$, then
	\[\fdim\cm^\times(n,m;\psi)=2\tau(\psi)\]
	where
	\[\tau(\psi):=\inf\biggl\{\tau\ge 0:\sum_{\bq\in\Z^n\setminus\{0\}}^\infty\bigg(\frac{\psi(\bq)^{1/m}}{|\bq|}\bigg)^\tau<\infty\biggr\}.\]
\end{thm}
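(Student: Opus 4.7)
The plan is to prove $\fdim\cm^\times(n,m;\psi)\ge 2\tau(\psi)$ and $\fdim\cm^\times(n,m;\psi)\le 2\tau(\psi)$ separately. The lower bound reduces essentially to the weighted Theorem \ref{t:fdimW}, while the upper bound requires direct Fourier-analytic estimates on a hypothetical Frostman measure, in the spirit of the proofs of Theorem \ref{t:fdim} in \cite{HY22,CH24}.

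For the lower bound I would exploit the elementary inclusion
\[W(n,m;\Psi)\subseteq \cm^\times(n,m;\psi),\qquad \Psi:=(\psi^{1/m},\dots,\psi^{1/m}),\]
which is immediate: if $\|\bq\bx_j\|<\psi(\bq)^{1/m}$ holds simultaneously for all $j$ and infinitely many $\bq$, then $\prod_j\|\bq\bx_j\|<\psi(\bq)$ for the same $\bq$. For this uniform weight $\sum_{\bq}\prod_{j}\psi_j(\bq)=\sum_{\bq}\psi(\bq)<\infty$ is implied by the convergence hypothesis, and directly from the definitions $s(\Psi)=\tau(\psi)$. Theorem \ref{t:fdimW} then yields $\fdim W(n,m;\Psi)=2\tau(\psi)$, giving the desired lower bound, provided the technical condition $\tau(\psi)<1$ is met. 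This is automatic when $n=1$, since $\tau(\psi)\le n=1$; in the remaining regime ($n\ge 2$ and $\tau(\psi)\ge 1$) the Kaufman-style measure construction behind Theorem \ref{t:fdimW} would need to be adapted, for instance by splitting $\bx$ into blocks and invoking the product formula of Corollary \ref{c:prod}.

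For the matching upper bound I would work directly with a candidate measure, because Fourier dimension is not countably stable and the natural covering of $\cm^\times(n,m;\psi)$ by weighted sets is an infinite union. Suppose $\mu\in\cp(\cm^\times(n,m;\psi))$ satisfies $|\hat\mu(\xi)|\ll(1+|\xi|)^{-s/2}$; the goal is $s\le 2\tau(\psi)$. A Borel--Cantelli computation forces
\[\sum_{\bq}\mu(H_\bq)=\infty,\qquad H_\bq:=\{\bx:\prod_j\|\bq\bx_j\|<\psi(\bq)\}.\]
I would then partition each hyperbolic region dyadically as $H_\bq=\bigcup_{\vec i}R_{\bq,\vec i}$ with $\vec i\in\N_0^m$, $\sum_j i_j\ge\log_2\psi(\bq)^{-1}-O(1)$, and $R_{\bq,\vec i}:=\{\bx:2^{-i_j-1}\le\|\bq\bx_j\|<2^{-i_j}\text{ for all }j\}$. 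Each $R_{\bq,\vec i}$ is a disjoint union of axis-parallel hyperrectangles whose Fourier coefficients are supported on the sublattice $\{(k_1\bq,\dots,k_m\bq):k_j\in\Z\}\subset\Z^{nm}$. Applying Plancherel together with the Fourier decay of $\mu$---as in the proof of Theorem \ref{t:fdim} in \cite{HY22,CH24}, but adapted to a weighted hyperrectangle---yields a bound on $\mu(R_{\bq,\vec i})$ in terms of $s$, the $2^{-i_j}$, and $|\bq|$. Summing over admissible $\vec i$ with $\sum_j i_j\approx\log_2\psi(\bq)^{-1}$ produces the $\log^{m-1}(\psi(\bq)^{-1})$ factor familiar from Theorem \ref{t:mulm}; combining with the divergence of $\sum_\bq\mu(H_\bq)$ then forces divergence of $\sum_\bq(\psi(\bq)^{1/m}/|\bq|)^{s}$, whence $s\le 2\tau(\psi)$.

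The principal obstacle is carrying out the Plancherel estimate on $\mu(R_{\bq,\vec i})$ so that, after summing over admissible dyadic patterns $\vec i$, the resulting bound depends on $\psi(\bq)$ only through the geometric mean $\psi(\bq)^{1/m}$ rather than through some worse symmetric function of the $2^{-i_j}$. This AM--GM phenomenon at the level of Fourier coefficients is precisely what forces the exponent $1/m$ in the definition of $\tau(\psi)$; conceptually it mirrors the product formula of Theorem \ref{t:product}, which shows why the Fourier dimension of the multiplicative set is governed by its ``worst direction''.
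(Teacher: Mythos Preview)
Your overall strategy matches the paper's proof almost exactly: the lower bound comes from the inclusion $W(n,m;\psi^{1/m})\subset\cm^\times(n,m;\psi)$, and the upper bound proceeds by dyadically decomposing $\cm(\bq,\psi(\bq))$ into hyperrectangles $R(\bq,2^{-\bk})$, applying the Fourier-analytic estimate of Lemma~\ref{l:meaupp} to each, and summing.

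Two small corrections. For the lower bound, you invoke Theorem~\ref{t:fdimW}, which carries the restriction $s(\Psi)<1$ and then forces you into an incomplete case analysis when $n\ge 2$. This detour is unnecessary: since the weight is uniform, you may cite Theorem~\ref{t:fdim} directly, which has no such restriction and gives $\fdim W(n,m;\psi^{1/m})=2s(\psi^{1/m})=2\tau(\psi)$ without further work. For the upper bound, with the decay normalisation $|\hat\mu(\xi)|\ll|\xi|^{-s/2}$ the exponent that emerges from the rectangle estimate is $s/2$, not $s$; one obtains $\mu\big(\cm(\bq,\delta)\big)\ll(\delta+|\bq|^{-s/2}\delta^{s/(2m)})\log^{m-1}(\delta^{-1})$, and divergence of $\sum_\bq(\psi(\bq)^{1/m}/|\bq|)^{s/2-\epsilon}$ is what yields $s\le 2\tau(\psi)$.

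Finally, the ``AM--GM phenomenon'' you flag as the principal obstacle is handled in the paper by the elementary pointwise inequality
\[
|\bt|^{-\tau}=\min_j\min\{1,|t_j|^{-\tau}\}\le\prod_{j=1}^{m}\min\{1,|t_j|^{-\tau/m}\},
\]
which factorises the lattice sum over $\bt\in\Z^m$ and makes the dependence on $\delta=2^{-(k_1+\cdots+k_m)}$ appear only through the product, hence through $\psi(\bq)^{1/m}$. This is exactly the mechanism you anticipated.
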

\begin{rem}\label{r:mulf}
	Let $W(n,m;\Psi)\subset \cm^\times(n,m;\psi)$ be defined in the same manner as in Remark \ref{r:haum}, where $\psi_1=\cdots=\psi_{m-1}=1$ and $\psi_m=\psi$. This set shares the same Hausdorff $f$-measure as $\cm^\times(n,m;\psi)$. However, this is not the case when we consider their Fourier dimensions. The set that has the same Fourier dimension as $\cm^\times(n,m;\psi)$ is  $W(n,m;\psi^{1/m})$, rather than $W(n,m;\Psi)$. This discrepancy can be explained by Theorem \ref{t:fdimW}, which shows that the Fourier dimension is determined by the `worst direction', preventing the Fourier dimension of $W(n,m;\Psi)$ from exceeding that of $W(n,m;\psi^{1/m})$. By Theorem \ref{t:fdim}, the lower bound for the Fourier dimension of $\cm^\times(n,m;\psi)$ follows immediately, however, as noted above, this does not provide an upper bound, since the Fourier dimension is not finitely stable.
\end{rem}
\begin{rem}
	Given that the bulk of the work in the above result lies in establishing an upper bound for $\fdim\cm^\times(n,m;\psi)$, it is worth noting that although the method developed by Tan and Zhou \cite{TZ24} is undoubtedly applicable to higher dimensions, the associated calculations grow increasingly complex as the dimension rises. Based on the observations outlined in Remark \ref{r:mulf}, we employ several simple yet effective tricks to bypass these calculations, thereby presenting a simpler proof.
\end{rem}

\subsection{Structure of the paper}
In Section \ref{s:Hausdorff}, we recall several notions and properties of Hausdorff measure and content. Additionally, we present the Hausdorff $f$-content of a hyperrectangle, which admits a representation similar to Falconer's singular value function. In the consecutive Sections \ref{s:Lebpart}--\ref{s:Fourierpart}, we prove the Lebesgue measure, the Hausdorff measure and the Fourier dimension of $W(n,m;\Psi)$, respectively. Analogous proofs for $\cm^\times(n,m;\Psi)$ are provided in Sections \ref{s:Lebpartm}--\ref{s:Fourierpartm}. In Section \ref{s:product}, we determine the Fourier dimension of product sets and present some applications.
	\section{Hausdorff measure and content}\label{s:Hausdorff}
Let $f$ be a dimension function. For a set $E\subset \R^d$ and $\eta>0$, let
\[\mathcal H_\eta^f(E)=\inf\bigg\{\sum_{i}f(|B_i|):E\subset \bigcup_{i\ge 1}B_i, \text{ where $B_i$ are balls with $|B_i|\le \eta$}\bigg\},\]
where $|A|$ denotes the diameter of $A$.
The {\em Hausdorff $f$-measure} of $E$ is defined as
\[\hm^f(E):=\lim_{\eta\to 0^+}\mathcal H_\eta^f(E).\]
When $\eta=\infty$, $\hc^f(E)$ is referred to as {\em Hausdorff $f$-content} of $E$.

The key tool for proving the divergence parts of Theorems \ref{t:main} and \ref{t:mulhs} is the following `balls-to-open sets' mass transference principle. Originally established by Koivusalo and Rams \cite{KR21} for Hausdorff dimension, this mass transference principle was later refined by Zhong \cite{Zh21} to extend to the Hausdorff measure statement.

\begin{thm}[{\cite[Theorem 3.2]{KR21} and \cite[Theorem 1.6]{Zh21}}]\label{t:weaken}
	Let $f$ be a dimension function such that $f\preceq d$. Assume that $\{B_k\}$ is a sequence of balls in $[0,1]^d$ with radii tending to 0, and that $\lm^d(\limsup B_k)=1$. Let $\{E_k\}$ be a sequence of open sets such that $E_k\subset B_k$. If there exists a constant $c>0$ such that for any $k\ge 1$,
	\begin{equation}\label{eq:cont}
		\hc^f (E_k)>c\lm^d(B_k),
	\end{equation}
	then,
	\[\hm^f\Big(\limsup_{k\to\infty} E_k\Big)=\hm^f([0,1]^d).\]
\end{thm}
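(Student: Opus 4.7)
The plan is to establish the theorem via the mass distribution principle applied to a measure built by iteratively nesting local Frostman measures coming from the sets $E_k$. I would first prove the local statement: for every ball $B \subset [0,1]^d$,
\[
\hm^f\Big(B \cap \limsup_{k\to\infty} E_k\Big) \gtrsim f(|B|),
\]
and then upgrade to the full-measure conclusion by a standard density/regularity argument, exploiting $f \preceq d$ to guarantee $\hm^f([0,1]^d) < \infty$.

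The two main ingredients are as follows. First, since $\hc^f(E_k) > c\, \lm^d(B_k)$, Frostman's lemma adapted to a general dimension function produces, for each $k$, a Borel measure $\nu_k$ supported on a compact subset of $E_k$ with total mass $\nu_k(E_k) \gtrsim \lm^d(B_k)$ and satisfying $\nu_k(B(y,r)) \leq f(r)$ for every ball. Second, the hypothesis $\lm^d(\limsup B_k) = 1$ together with a Vitali-type selection gives the following ubiquity: for every ball $B \subset [0,1]^d$ and every $\eta > 0$ one can find pairwise disjoint $B_{k_i} \subset B$ with $|B_{k_i}| \leq \eta$ and $\sum_i \lm^d(B_{k_i}) \geq \gamma \, \lm^d(B)$, where $\gamma > 0$ depends only on the ambient dimension. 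This latter property is the precise reason that the Lebesgue-full $\limsup$ assumption can play the role of a uniform local ubiquity statement.

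With these in hand, I construct a Cantor-like subset $K \subset B \cap \limsup_k E_k$ and a probability measure $\mu$ supported on $K$. Starting from $B^{(0)} = B$, at stage $\ell$ I apply the ubiquity inside each surviving ball $B^{(\ell-1)}$ to select a disjoint family $\{B_{k_i}^{(\ell)}\}$ of radii below a rapidly decreasing threshold (chosen so that the Cantor construction converges and the successive levels nest correctly). The measure $\mu$ is defined inductively: the mass previously assigned to $B^{(\ell-1)}$ is redistributed among the offspring $B_{k_i}^{(\ell)}$ in proportion to $\lm^d(B_{k_i}^{(\ell)}) / \sum_j \lm^d(B_{k_j}^{(\ell)})$, and within each $B_{k_i}^{(\ell)}$ it is redistributed via the normalized Frostman measure $\nu_{k_i}^{(\ell)} / \nu_{k_i}^{(\ell)}(E_{k_i}^{(\ell)})$. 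Taking the weak limit yields a probability measure on $K$, which by construction lies in $\limsup_k E_k$.

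The main technical obstacle is verifying the uniform Frostman bound $\mu(B(x,r)) \lesssim f(r)/f(|B|)$ across \emph{all} scales $r$, which is what feeds into the mass distribution principle to conclude $\hm^f(K) \gtrsim f(|B|)$. The estimate splits into scale regimes: when $r$ is smaller than the current stage radius one invokes the pointwise Frostman property of a single $\nu_{k_i}^{(\ell)}$; when $r$ is comparable to a stage radius one uses the mass-allocation step directly; and when $r$ is larger than the current stage radius, one sums over the offspring intersected by $B(x,r)$ and uses $f \preceq d$ (equivalently, $f(r)/r^d$ non-increasing) together with the Vitali-type efficiency $\sum_i \lm^d(B_{k_i}^{(\ell)}) \lesssim \lm^d(B(x,r))$ to collapse the sum into a single factor of $f(r)/f(|B|)$. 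Careful bookkeeping of the constants from Frostman, Vitali, and the ubiquity parameter $\gamma$ is the main chore, but the framework then delivers the local lower bound and hence the theorem.
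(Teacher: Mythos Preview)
The paper does not prove this theorem at all: it is quoted verbatim as a black box from Koivusalo--Rams \cite{KR21} (Hausdorff dimension version) and Zhong \cite{Zh21} (Hausdorff measure version), and is then invoked as a tool in Sections~\ref{s:Hausdorffpart} and~\ref{s:Hausdorffpartm}. There is therefore no ``paper's own proof'' to compare against.

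That said, your sketch is broadly in line with the strategy in the cited references. The Cantor-type construction with a $K_{G_\delta}$-type selection (Vitali disjoint subfamilies of the $B_k$ inside each current-generation ball, using $\lm^d(\limsup B_k)=1$), combined with local Frostman measures extracted from the content bound $\hc^f(E_k)>c\,\lm^d(B_k)$, and an inductive mass redistribution is exactly how Koivusalo--Rams and Zhong proceed. The scale-regime analysis you outline for $\mu(B(x,r))\lesssim f(r)$ is also the right shape, and the role of $f\preceq d$ is correctly identified (it converts Lebesgue-volume comparisons into $f$-content comparisons when passing from children to parents). One point to be careful with: in the intermediate regime where $B(x,r)$ meets several offspring but is smaller than the parent, you need both the disjointness of the $B_{k_i}^{(\ell)}$ and a separation condition (or a $5r$-covering trick) to ensure the number of offspring hit by $B(x,r)$ is controlled; this is typically arranged by shrinking the selected balls slightly (the ``$5B$'' or ``$3B$'' device) before nesting, and you have not made this explicit. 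Otherwise the plan is sound.
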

\begin{rem}
	In fact, $\limsup$ set satisfying \eqref{eq:cont} has the so-called large intersection given in \cite[Theorem 2.4]{He24}, which means that the intersection of the sets with countably many similar copies of itself still has full Hausdorff $f$-measure.
\end{rem}
With this result now at our disposal, the main ideas to prove the divergence parts of Theorems \ref{t:main} and \ref{t:mulhs} are to verify certain sets under consideration satisfying some Hausdorff $f$-content bound. For this purpose, the following mass distribution principle will be crucial.
\begin{prop}[Mass distribution principle {\cite[Lemma 1.2.8]{BiPe17}}]\label{p:MDP}
	Let $ E $ be a Borel subset of $ \R^d $. If $ E $ supports a Borel probability measure $ \mu $ that satisfies
	\[\mu(B)\le cf(|B|),\]
	for some constant $ 0<c<\infty $ and for every ball $B$, then $ \hc^f(E)\ge1/c $.
\end{prop}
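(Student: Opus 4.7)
The statement is the classical Mass Distribution Principle, and the proof is short and standard; the whole argument rests on testing an arbitrary ball cover of $E$ against the measure $\mu$.

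First I would fix an arbitrary countable cover $\{B_i\}_{i\ge 1}$ of $E$ by balls in $\R^d$. Since the Hausdorff $f$-content $\hc^f$ places no restriction on the diameters of the covering balls (the defining parameter is $\eta=\infty$), there is no need to truncate the cover. Because $\mu$ is a Borel probability measure with $\mu(E)=1$, countable subadditivity gives
\[1 \;=\; \mu(E) \;\le\; \sum_{i\ge 1}\mu(B_i).\]
Applying the pointwise hypothesis $\mu(B)\le c\, f(|B|)$ term-by-term produces
\[1 \;\le\; c\sum_{i\ge 1}f(|B_i|), \qquad \text{so} \qquad \sum_{i\ge 1}f(|B_i|)\;\ge\;\frac{1}{c}.\]
Taking the infimum over all such ball covers, and invoking the definition of $\hc^f(E)$, then delivers $\hc^f(E)\ge 1/c$, as required.

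There is essentially no genuine obstacle here: the conclusion is forced by countable additivity and the pointwise mass bound, with no iteration or limiting procedure required beyond the infimum in the definition of $\hc^f$. The only small subtlety worth flagging is the role of the hypothesis being assumed for \emph{every} ball; this is exactly what allows us to feed in an arbitrary element of a cover with no loss. Had the definition of $\hc^f$ used general covering sets $U_i$ rather than balls, one would enclose each $U_i$ in a ball of comparable diameter and absorb the resulting geometric constant into $c$, but that reduction is not needed with the ball-based definition used in the excerpt.
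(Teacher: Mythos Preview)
Your argument is correct and is the standard proof of the Mass Distribution Principle. The paper does not supply its own proof of this proposition---it is quoted as \cite[Lemma 1.2.8]{BiPe17}---so there is nothing to compare; your derivation via countable subadditivity of $\mu$ applied to an arbitrary ball cover is exactly the classical one.
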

Given that the $\limsup$ sets under consideration are defined by hyperrectangles, it is necessary to estimate their Hausdorff $f$-content.
\begin{prop}\label{p:rec}
	Let $R\subset\R^d$ be a hyperrectangle with side lengths
	\[a_1\ge a_2\ge\cdots\ge a_d>0.\]
	Suppose that $f$ is a dimension function such that $k\preceq f\preceq (k+1)$ for some $0\le k\le d-1 $. Then,
	\[\hc^f(R)\asymp a_1\cdots a_k a_{k+1}^{-k} f(a_{k+1}),\]
	where the unspecified constant depends on $d$ only.
\end{prop}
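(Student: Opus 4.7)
\medskip

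\noindent\textbf{Proof proposal.} The plan is to establish the upper and lower bounds on $\hc^f(R)$ separately, with the upper bound achieved by an explicit covering and the lower bound obtained via the mass distribution principle (Proposition~\ref{p:MDP}). Throughout, set $V:=a_1\cdots a_k\, a_{k+1}^{-k}\, f(a_{k+1})$ and let $r=|B|$ denote the diameter of a generic ball $B$.

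For the upper bound, I would cover $R$ by axis-aligned cubes of side length $a_{k+1}$. Since $a_i\ge a_{k+1}$ for $i\le k$ and $a_i\le a_{k+1}$ for $i\ge k+1$, the number of cubes needed is at most $2^k a_1\cdots a_k/a_{k+1}^k$, and each has diameter $\sqrt{d}\,a_{k+1}$. The hypothesis $f\preceq k+1$ gives $f(\sqrt{d}\,a_{k+1})\le (\sqrt{d})^{k+1}f(a_{k+1})$, and combining these estimates yields $\hc^f(R)\ll V$ with an implicit constant depending only on $d$.

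For the lower bound, define the measure
\[
\nu:=\frac{V}{a_1\cdots a_d}\,\lm^d\!\upharpoonright_R,
\]
which has total mass $V$ and is supported on $R$. By the mass distribution principle (applied to $\nu/V$), it suffices to verify that $\nu(B)\le f(r)$ for every ball $B$, as this gives $\hc^f(R)\ge V$. Since $\nu(B)\le \tfrac{V}{a_1\cdots a_d}\prod_{i=1}^d\min(r,a_i)$, I would perform a case analysis based on where $r$ sits among the side lengths, using precisely the two inequalities encoded by the hypothesis:
\[
r\le a_{k+1}\ \Longrightarrow\ f(r)\ge f(a_{k+1})\Bigl(\frac{r}{a_{k+1}}\Bigr)^{k+1}\quad(\text{from }f\preceq k+1),
\]
\[
r\ge a_{k+1}\ \Longrightarrow\ f(r)\ge f(a_{k+1})\Bigl(\frac{r}{a_{k+1}}\Bigr)^{k}\quad(\text{from }k\preceq f).
\]
In the low regime $r\in[a_j,a_{j-1}]$ with $j\ge k+2$ (so $r\le a_{k+1}$), the bound $\nu(B)$ reduces to $f(r)\prod_{i=k+2}^{j-1}(r/a_i)\le f(r)$ after invoking the first inequality; in the high regime $r\in[a_j,a_{j-1}]$ with $j\le k+1$ (so $r\ge a_{k+1}$), it reduces to $f(r)\prod_{i=j}^{k}(a_i/r)\le f(r)$ using the second. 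The boundary cases $r\le a_d$ and $r\ge a_1$ are handled analogously.

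The main obstacle is keeping track of the bookkeeping in this case analysis and making sure the critical exponent $k$ or $k+1$ is chosen correctly depending on whether $r$ is above or below $a_{k+1}$; the transitions at $r=a_{k+1}$ and $r=a_k$ are where the interplay between the two sides of the hypothesis $k\preceq f\preceq k+1$ becomes essential. Once the case analysis is complete, both bounds combine to give $\hc^f(R)\asymp V$, as required.
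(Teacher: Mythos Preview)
Your proposal is correct and follows essentially the same approach as the paper: an explicit cover for the upper bound and the mass distribution principle with normalized Lebesgue measure on $R$ for the lower bound. The only structural difference is that you target the scale $a_{k+1}$ directly in both steps, whereas the paper first proves $\hc^f(R)\asymp\min_{0\le i\le d-1}\{a_1\cdots a_i\,a_{i+1}^{-i}f(a_{i+1})\}$ (using, in the lower bound, whichever of $i\preceq f$ or $f\preceq i$ holds for each $i$) and then verifies afterward that this minimum is attained at $i=k$; your route is slightly more economical, while the paper's yields the intermediate identity \eqref{eq:hcR>>}, which is later quoted in the proof of Lemma~\ref{l:content>}.
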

\begin{proof}
	For the upper bound, in view of the definition of Hausdorff $f$-content, we consider covering $R$ by balls of radius $a_i$ ($1\le i\le d$), respectively. For any $1\le i\le d$, it is not difficult to see that $R$ can be covered by $\asymp\prod_{j=1}^{i-1}a_j/a_i $ balls of radius $a_i$. Thus,
	\[\hc^f(R)\ll \min_{1\le i\le d}\bigg\{f(a_i)\prod_{j=1}^{i-1}\dfrac{a_j}{a_i}\bigg\}=\min_{0\le i\le d-1}\{a_1\cdots a_ia_{i+1}^{-i}f(a_{i+1})\}.\]
	We next show that, up to a multiplicative constant, the reverse inequality holds, and leave the verification that the minimum is actually attained at $i=k$ at the end of the proof.

	To obtain the lower bound for the Hausdorff $f$-content, we first define a measure $\mu$ supported on $R$ as follows:
	\[\mu=\frac{\lm^d|_R}{\lm^d(R)}=\frac{\lm^d|_R}{a_1\cdots a_d}.\]
	Next, we estimate the $\mu$-measure of arbitrary balls $B(\bx,r)$ with $\bx\in R$ and $r>0$. We consider two cases.

	\noindent\textbf{Case 1:} $0<2r\le a_d$. Then,
	\[\mu\big(B(\bx,r)\big)\ll \frac{(2r)^d}{a_1\cdots a_d}.\]
	By the definition of the symbol $\preceq$, the condition  $k\preceq f\preceq (k+1)$ implies
	\begin{equation}\label{eq:<f<}
		i\preceq f \quad\text{for any $i\le k$},\qaq f\preceq j\quad\text{for any $j\ge k+1$}.
	\end{equation}
	Since $k+1\le d$, we have $f\preceq d$, and so
	\[\frac{f(a_d)}{a_d^d}\le\frac{f(2r)}{(2r)^d}\quad\Longrightarrow\quad (2r)^d\le \frac{a_d^df(2r)}{f(a_d)}.\]
	It follows that
	\[\mu\big(B(\bx,r)\big)\ll\frac{a_d^df(2r)}{f(a_d)a_1\cdots a_d}= \frac{f(2r)}{a_1\cdots a_{d-1}a_d^{-d+1}f(a_d)}.\]

	\noindent\textbf{Case 2:} $a_{i+1}< 2r\le a_i$ for some $1\le i\le d-1$. By definition,
		\[\mu\big(B(\bx,r)\big)\ll \frac{(2r)^ia_{i+1}\cdots a_d}{a_1\cdots a_d}=\frac{(2r)^i}{a_1\cdots a_i}.\]
	By \eqref{eq:<f<}, it holds that
	\[\text{either \quad$i\preceq f$\quad or\quad $f\preceq i$}.\]
	Suppose we are in the case $i\preceq f$. Since $a_{i+1}<2r$, we have
	\[\frac{(2r)^i}{f(2r)}\le\frac{a_{i+1}^i}{f(a_{i+1})}\quad\Longrightarrow\quad (2r)^i\le \frac{a_{i+1}^if(2r)}{f(a_{i+1})}.\]
	Therefore,
	\[\mu\big(B(\bx,r)\big)\ll\frac{a_{i+1}^if(2r)}{f(a_{i+1})a_1\cdots a_i}= \frac{f(2r)}{a_1\cdots a_ia_{i+1}^{-i}f(a_{i+1})}.\]

	Suppose for otherwise that $f\preceq i$. Using $2r\le a_i$, by similar calculations, we obtain
	\[\mu\big(B(\bx,r)\big)\ll\frac{a_i^if(2r)}{f(a_i)a_1\cdots a_i}= \frac{f(2r)}{a_1\cdots a_{i-1}a_i^{-i+1}f(a_i)}.\]

	Gathering the estimates in Cases 1 and 2, we have that for any $\bx\in R$ and $r>0$,
	\[\mu\big(B(\bx,r)\big)\ll \frac{f(2r)}{\min\limits_{0\le i\le d-1}a_1\cdots a_ia_{i+1}^{-i}f(a_{i+1})}.\]
	It follows from the mass distribution principle (see Proposition \ref{p:MDP}) that
\begin{equation}\label{eq:hcR>>}
	\hc^f(R)\gg \min_{0\le i\le d-1}\{a_1\cdots a_ia_{i+1}^{-i}f(a_{i+1})\},
\end{equation}
	as desired.

	It remains to show that the minimum is attained at $i=k$. For any $i\le k$, since $a_{i+1}\le a_i$, by $i\preceq f$ (see \eqref{eq:<f<}), we have $f(a_{i+1})a_{i+1}^{-i}\le f(a_i)a_i^{-i}$. Therefore,
	\[a_1\cdots a_{i-1}a_i^{-i+1}f(a_i)=a_1\cdots a_{i-1}a_i a_i^{-i}f(a_i)\ge a_1\cdots a_{i-1}a_i a_{i+1}^{-i}f(a_{i+1}),\]
	which means that
		\begin{equation}\label{eq:a1>a_k}
			f(a_1)\ge a_1 a_2^{-1}f(a_2)\ge\cdots\ge a_1\cdots a_ka_{k+1}^{-k}f(a_{k+1}).
		\end{equation}
		Analogously, for any $i\ge k+1$, since $a_{i+1}\le a_i$, by $f\preceq i$ (see \eqref{eq:<f<}), we have $f(a_i)a_i^{-i}\le f(a_{i+1})a_{i+1}^{-i}$. Therefore,
		\[a_1\cdots a_{i-1}a_i^{-i+1}f(a_i)=a_1\cdots a_{i-1}a_i a_i^{-i}f(a_i)\le a_1\cdots a_{i-1}a_i a_{i+1}^{-i}f(a_{i+1}),\]
		which means that
		\begin{equation}\label{eq:a_k<a_d}
			a_1\cdots a_ka_{k+1}^{-k}f(a_{k+1})\le \cdots\le a_1\cdots a_{d-1}a_d^{-d+1}f(a_d).
		\end{equation}
		Combining with \eqref{eq:a1>a_k} and \eqref{eq:a_k<a_d}, we conclude the proposition.
\end{proof}
\section{Lebesgue measure of $W(n,m;\Psi)$}\label{s:Lebpart}
Theorem \ref{t:Schmidt} (1) has already been established in \cite{Sc60,HY14}, so in this section, we focus on the remaining case of the theorem.
We make use of the following natural representation of $W(n,m;\Psi)$ as a $\limsup$ set. For $\bd=(\delta_1,\dots,\delta_m)\in(\R^+)^m$ and  $\bq=(q_1,\dots,q_n)\in\Z^n$, let
\begin{equation}\label{eq:rqdelta}
	R(\bq,\bd):=\big\{\bx\in[0,1]^{nm}:\|\bq\bx_j\|<\delta_j\text{ for $1\le j\le m$}\big\}.
\end{equation}
The convergence part of Theorem \ref{t:Schmidt} is relatively straightforward to establish and does not require any assumptions on $\Psi$.
\begin{lem}\label{l:meaweic}
	Let $n,m\ge 1$. Then,
	\[\sum_{\bq\in\Z^n}\prod_{j=1}^m\psi_j(\bq)<\infty\quad\Longrightarrow\quad \lm^{nm}\big(W(n,m;\Psi)\big)=0.\]
\end{lem}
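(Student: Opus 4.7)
The plan is to prove this by a direct application of the (first) Borel--Cantelli lemma. Express $W(n,m;\Psi)$ as the $\limsup$ set
\[
W(n,m;\Psi)=\limsup_{|\bq|\to\infty} R\bigl(\bq,(\psi_1(\bq),\dots,\psi_m(\bq))\bigr),
\]
with $R(\bq,\bd)$ as defined in \eqref{eq:rqdelta}. Then it suffices to show $\sum_{\bq\in\Z^n}\lm^{nm}(R(\bq,\bd_\bq))<\infty$, with $\bd_\bq=(\psi_1(\bq),\dots,\psi_m(\bq))$.

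The key step is a product estimate on the Lebesgue measure of each rectangle. Since the defining conditions $\|\bq\bx_j\|<\psi_j(\bq)$ decouple across $j=1,\dots,m$, Fubini gives
\[
\lm^{nm}\bigl(R(\bq,\bd_\bq)\bigr)=\prod_{j=1}^{m}\lm^{n}\!\bigl(\{\bx_j\in[0,1]^n:\|\bq\bx_j\|<\psi_j(\bq)\}\bigr).
\]
For $\bq\neq 0$, each factor is bounded by $\ll\min\{1,\psi_j(\bq)\}\le 2\psi_j(\bq)$ (for $\psi_j(\bq)<1/2$ the slab-counting argument on the coordinate attaining $|\bq|$ gives a length of order $\psi_j(\bq)$; for $\psi_j(\bq)\ge1/2$ the trivial bound $1\le 2\psi_j(\bq)$ still works). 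Consequently there is a constant $C=C(n,m)$ such that
\[
\lm^{nm}\bigl(R(\bq,\bd_\bq)\bigr)\le C\prod_{j=1}^{m}\psi_j(\bq)\qquad\text{for every }\bq\in\Z^n\setminus\{0\}.
\]

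Summing over $\bq\neq 0$ and adding the single contribution from $\bq=0$ (which is at most $1$, so harmless) yields
\[
\sum_{\bq\in\Z^n}\lm^{nm}\bigl(R(\bq,\bd_\bq)\bigr)\le 1+C\sum_{\bq\in\Z^n\setminus\{0\}}\prod_{j=1}^{m}\psi_j(\bq)<\infty
\]
by hypothesis. The Borel--Cantelli lemma then delivers $\lm^{nm}(W(n,m;\Psi))=0$. Since this argument makes no appeal to monotonicity or to any structural assumption on $\Psi$, the lemma holds in full generality, consistent with the fact that the convergence half of Theorem \ref{t:Schmidt} should be the easy direction. There is no essential obstacle here; the only point requiring a moment of care is making the slab-counting estimate uniform in $\bq$ so that the implied constant really depends on $n$ and $m$ alone.
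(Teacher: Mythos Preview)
Your proof is correct and follows essentially the same approach as the paper: write $W(n,m;\Psi)$ as the $\limsup$ of the sets $R(\bq,\Psi(\bq))$, bound $\lm^{nm}\bigl(R(\bq,\Psi(\bq))\bigr)\ll\prod_{j=1}^m\psi_j(\bq)$, and apply Borel--Cantelli (the paper spells out the tail-sum directly rather than naming the lemma). Your Fubini/slab-counting justification of the measure bound is slightly more detailed than the paper's, which simply asserts the $\asymp$ relation.
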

\begin{proof}
	We can write $W(n,m;\Psi)$ as
	\[W(n,m;\Psi)=\limsup_{\bq\in\Z^n:|\bq|\to\infty} R\big(\bq,\Psi(\bq)\big)=\bigcap_{Q=1}^{\infty}\bigcup_{q=Q}^\infty\bigcup_{\bq\in\Z^n:|\bq|=q}R\big(\bq,\Psi(\bq)\big).\]
	Since each set $R(\bq,\Psi(\bq))$ has Lebesgue measure $\asymp\prod_{j=1}^{m}\psi_j(\bq)$, we have
	\[\begin{split}
		\lm^{nm}\big(W(n,m;\Psi)\big)&\ll\lim_{Q\to\infty}\sum_{q=Q}^{\infty}\lm^{nm}\bigg(\sum_{\bq\in\Z^n:|\bq|=q}R\big(\bq,\Psi(\bq)\big)\bigg)\\
		&\le\lim_{Q\to\infty}\sum_{q=Q}^{\infty}\sum_{\bq\in\Z^n:|\bq|=q}\prod_{j=1}^{m}\psi_j(\bq)=0,
	\end{split}\]
	where the last equality follows from convergence of the series in the lemma.
\end{proof}

The divergence part of the theorem is  more involved and relies on the following result, which does not carry any monotonicity
assumptions and allows us to reduce the proof to show that $W(n,m;\Psi)$ is of positive measure.
\begin{thm}[{\cite[Theorem 3.1]{Li13}}]\label{l:reduce}
	For any $n,m\ge 1$, let $\Psi=(\psi_1,\dots,\psi_m)$ be an $m$-tuple of multivariable approximating functions with $\psi_j:\Z^n\to \R^+$ for $1\le j\le m$. Then,
	\[\lm^{nm}\big(W(n,m;\Psi)\big)\in\{0,1\}.\]
	In particular,
	\[\lm^{nm}\big(W(n,m;\Psi)\big)>0\quad\Longrightarrow\quad \lm^{nm}\big(W(n,m;\Psi)\big)=1.\]
\end{thm}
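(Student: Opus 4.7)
Plan: The goal is a zero-one law for $\lm^{nm}(W)$ where $W := W(n,m;\Psi)$, with no monotonicity assumption on $\Psi$. My approach combines the Lebesgue density theorem with the periodic structure that each set $R(\bq,\Psi(\bq))$ inherits from the norm $\|\cdot\|$.

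First, since $\|\cdot\|$ is $\Z$-periodic, $W$ is $\Z^{nm}$-periodic and may be viewed as a Borel subset of the torus $\R^{nm}/\Z^{nm}$. Assume $\lm^{nm}(W)>0$; by the Lebesgue density theorem there exists a density-one point $\bx_0\in W$: for every $\varepsilon>0$ there is $r_0>0$ with $\lm^{nm}(W\cap B(\bx_0,r))\ge(1-\varepsilon)\lm^{nm}(B(\bx_0,r))$ whenever $0<r\le r_0$.

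Second, I would exploit that each $R(\bq,\Psi(\bq))$ decomposes as a product of unions of parallel affine slabs orthogonal to $\bq$. Consequently it is invariant under the finite subgroup
\[\Lambda_{\bq} := \bigl\{\bv \in \R^{nm}/\Z^{nm} : \bq\cdot\bv_j \in \Z \text{ for every } 1\le j\le m\bigr\},\]
whose cosets have covering radius $\asymp 1/|\bq|$. In particular $\bigcup_{|\bq|\ge Q}\Lambda_{\bq}$ is dense in $\R^{nm}/\Z^{nm}$ for every fixed $Q$.

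Third, for any target point $\by\in\R^{nm}/\Z^{nm}$ I would transfer the density-one property from $\bx_0$ to $\by$. Working with the open tail sets $V_Q:=\bigcup_{|\bq|\ge Q} R(\bq,\Psi(\bq))\supset W$, each $V_Q$ already has density one at $\bx_0$. At a small scale $r$ where $\lm^{nm}(V_Q\cap B(\bx_0,r))\ge(1-\varepsilon)\lm^{nm}(B(\bx_0,r))$, a pigeonhole argument selects a single $\bq$ with $|\bq|\ge Q$ whose set $R(\bq,\Psi(\bq))$ accounts for a non-negligible proportion of $V_Q\cap B(\bx_0,r)$. Choosing $\bv\in\Lambda_{\bq}$ within $1/|\bq|$ of $\by-\bx_0$ and invoking the $\Lambda_{\bq}$-invariance of $R(\bq,\Psi(\bq))$, I translate a high-density sub-ball of $B(\bx_0,r)$ onto a neighborhood of $\by$. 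A diagonal passage $Q\to\infty$, $r\to 0$, $\varepsilon\to 0$ then yields density one for $W$ at $\by$; since $\by$ was arbitrary, $\lm^{nm}(W)=1$.

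The main obstacle is the pigeonhole step: $V_Q$ is not invariant under any single $\Lambda_{\bq}$, so one must isolate a \emph{dominant} piece $R(\bq,\Psi(\bq))$ whose geometry locally controls $V_Q$ near $\bx_0$. Balancing the scale $r$, the cutoff $Q$, and the required closeness of $\bv$ to $\by-\bx_0$ is the delicate core of the argument; absent monotonicity, this balance cannot be inferred from a direct Borel--Cantelli computation, which is why the density-plus-invariance route is the natural one.
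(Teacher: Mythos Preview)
The paper does not prove this statement; it is quoted verbatim from \cite[Theorem 3.1]{Li13} and used as a black box. So there is no in-paper proof to compare against, and your proposal must be judged on its own.

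Your density-plus-invariance strategy is the natural first idea, and Steps~1--2 are sound (with the minor correction that for $n\ge 2$ the subgroup $\Lambda_{\bq}$ is not finite but an $(n-1)m$-dimensional subtorus; the covering-radius claim survives). The genuine gap is exactly the one you flag in your final paragraph, and I do not think it can be closed along the lines you sketch. Concretely: from $\lm^{nm}(V_Q\cap B(\bx_0,r))\ge (1-\varepsilon)\lm^{nm}(B(\bx_0,r))$ you cannot extract a single $\bq$ whose slab $R(\bq,\Psi(\bq))$ carries a \emph{fixed} positive proportion of that mass. The sets $R(\bq,\Psi(\bq))$ may each have arbitrarily small relative measure in $B(\bx_0,r)$ while their union is large, and with no monotonicity on $\Psi$ there is nothing preventing this. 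Even if you accept a ``non-negligible'' piece for one $\bq$, translating only that piece to a neighbourhood of $\by$ gives you a lower density bound that depends on the (uncontrolled) proportion, and the diagonal passage $Q\to\infty$, $r\to 0$ does not upgrade this to density one. Conversely, translating different pieces by different $\bv_{\bq}\in\Lambda_{\bq}$ destroys the complement bound, because $\bigcap_{\bq}R(\bq,\Psi(\bq))^c$ is not invariant under any common translation.

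The arguments that do work for zero--one laws of this type (Cassels, Gallagher, and Li's adaptation) avoid translation altogether and instead exploit \emph{scaling}: one shows that $W$ agrees, up to a null set, with its preimage under $\bx\mapsto k\bx\bmod 1$ for every positive integer $k$, and then uses that the only measurable sets with this invariance have measure $0$ or $1$. The scaling map interacts well with the whole family $\{R(\bq,\Psi(\bq))\}$ simultaneously (it sends $\|\bq\bx_j\|$ to $\|(k\bq)\bx_j\|$), which is precisely what your translation approach lacks. If you want to complete a self-contained proof, that is the route to take.
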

Lamperti's result \cite{La63} provides a method for establishing lower bounds on the measure of $\limsup$ sets.
\begin{lem}[\cite{La63}]\label{l:quasiind}
	Let $(X,\mu)$ be a probability space, and let $\{E_i\}$ be measurable subsets of $X$ such that
	\[\sum_{i=1}^{\infty}\mu(E_i)=\infty\qaq\mu(E_i\cap E_j)\le C\mu(E_i)\mu(E_j)\quad\text{for some $C\ge 1$}.\]
	Then,
	\[\mu\Big(\limsup_{i\to\infty}E_i\Big)>0.\]
\end{lem}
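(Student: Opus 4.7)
The plan is to prove this via a standard second-moment (Paley--Zygmund) argument, converting the quasi-independence hypothesis into a lower bound on the probability that infinitely many of the $E_i$ occur. The heart of the argument is the following truncation: for each $N$, set $S_N=\sum_{i=1}^{N}\mathbf 1_{E_i}$, so that $\mathbb{E}[S_N]=\sum_{i=1}^{N}\mu(E_i)$ and $\mathbb{E}[S_N^2]=\sum_{i,j\le N}\mu(E_i\cap E_j)$. Using the quasi-independence hypothesis on the off-diagonal terms and $\mu(E_i)\le 1$ on the diagonal, one gets the clean bound
\[
\mathbb{E}[S_N^2]\le \mathbb{E}[S_N]+C\,\mathbb{E}[S_N]^2.
\]

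Next I would apply the Cauchy--Schwarz inequality in the form
\[
\mathbb{E}[S_N]^2=\bigl(\mathbb{E}[S_N\,\mathbf 1_{\{S_N>0\}}]\bigr)^2\le \mathbb{E}[S_N^2]\cdot\mu(S_N>0),
\]
which combined with the variance bound above gives
\[
\mu\Bigl(\bigcup_{i=1}^{N} E_i\Bigr)=\mu(S_N>0)\ge \frac{\mathbb{E}[S_N]^2}{\mathbb{E}[S_N]+C\,\mathbb{E}[S_N]^2}.
\]
Since $\sum_i \mu(E_i)=\infty$, we have $\mathbb{E}[S_N]\to\infty$ as $N\to\infty$, and the right-hand side tends to $1/C$. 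Letting $N\to\infty$ and using continuity of $\mu$ from below yields $\mu\bigl(\bigcup_{i=1}^{\infty}E_i\bigr)\ge 1/C$.

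Finally, to upgrade this union bound to a statement about the $\limsup$, I would observe that the hypotheses are hereditary: for any $M\ge 1$, the tail family $\{E_i\}_{i\ge M}$ still satisfies $\sum_{i\ge M}\mu(E_i)=\infty$ and the same quasi-independence constant $C$. Applying the argument above to this tail gives $\mu\bigl(\bigcup_{i\ge M}E_i\bigr)\ge 1/C$ for every $M$. Writing $\limsup_{i\to\infty}E_i=\bigcap_{M\ge 1}\bigcup_{i\ge M}E_i$ as a decreasing intersection and invoking continuity from above then yields $\mu(\limsup E_i)\ge 1/C$, completing the proof.

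The only delicate point is the treatment of the diagonal terms $\mu(E_i\cap E_i)=\mu(E_i)$ in the second-moment bound, since the quasi-independence inequality cannot in general be applied with $i=j$ (it would require $\mu(E_i)\le C\mu(E_i)^2$). Separating diagonal from off-diagonal contributions is the minor obstacle; once the bound $\mathbb{E}[S_N^2]\le \mathbb{E}[S_N]+C\mathbb{E}[S_N]^2$ is in hand, the rest is a routine divergence/limit argument.
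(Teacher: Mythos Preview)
Your proof is correct and is the standard Paley--Zygmund/second-moment argument for this classical result. Note that the paper does not actually supply a proof of this lemma: it is quoted directly from Lamperti \cite{La63} and used as a black box, so there is no ``paper's own proof'' to compare against. Your handling of the diagonal terms (bounding $\sum_i\mu(E_i)$ separately rather than forcing the quasi-independence inequality at $i=j$) and the tail-shift from $\bigcup_i E_i$ to $\limsup_i E_i$ are exactly the right moves.
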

In light of Lemma \ref{l:quasiind}, the desired statement follows by showing that the sets are quasi-independent and that the sum of their measures diverges. The weighted nature of $W(n,m;\Psi)$ suggests that it is more appropriate to study the quasi-independence of the following sets: For $\bd=(\delta_1,\dots,\delta_m)\in(\R^+)^m$ and  $\bq=(q_1,\dots,q_n)\in\Z^n$, let
\[\begin{split}
	R_j'(\bq,\delta_j):=&\big\{\bx_j\in[0,1]^{m}:|\bq\bx_j-p_j|<\delta_j\text{ for some $p_j\in\Z$ with $\gcd(\bq,p_j)$=1}\big\}
\end{split}\]
for $1\le j\le m$, and let
\begin{equation}\label{eq:R'}
	\begin{split}
		R'(\bq,\bd):=&\big\{\bx\in[0,1]^{nm}:|\bq\bx_j-p_j|<\delta_j\text{ ($1\le j\le m$)}\\
		&\hspace{8em}\text{for some $\bp\in\Z^m$ with $\gcd(\bq,p_j)=1$ for all $j$}\big\}.
	\end{split}
\end{equation}
Here $\gcd(\bq,p_j)$ denotes the greatest common divisor of $q_1,\dots,q_n,p_j$. Clearly, one has
\[R'\big(\bq,\Psi(\bq)\big)=\prod_{j=1}^mR_j'\big(\bq,\psi_j(\bq)\big).\]
We stress that in the definition of $R'(\bq,\bd)$, each $p_j$ is required to be coprime with $\bq$ instead of the weaker restriction $\gcd(\bq,\bp)=1$, which would result in a set smaller.
\begin{lem}[{\cite[Lemmas 8 and 9]{BV10}}]\label{l:BV}
	Let $n\ge 1$, $0\le\delta_1,\delta_2<1$ and $\bq_1\ne\pm\bq_2\in\Z^n$. For any $1\le j\le m$, we have
	\[\lm^n\big(R_j'(\bq_1,\delta_1)\big)=2\delta_1\frac{\varphi(d)}{d},\]
	where $d=\gcd(\bq_1)$ and $\varphi$ is the Euler quotient function. Moreover,
	\[\lm^n\big(R_j'(\bq_1,\delta_1)\cap R_j'(\bq_2,\delta_2)\big)\ll\delta_1\delta_2.\]
\end{lem}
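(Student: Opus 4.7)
For the measure formula for $R'_j(\bq_1,\delta_1)$, the plan is to reduce to the case where $\bq_1$ is primitive. Writing $\bq_1=d\,\bq'$ with $d=\gcd(\bq_1)$ and $\gcd(\bq')=1$, one observes that the coprimality condition $\gcd(\bq_1,p)=1$ collapses to $\gcd(d,p)=1$. Hence a point $\bx_j\in[0,1]^n$ lies in $R'_j(\bq_1,\delta_1)$ exactly when $\bq'\cdot\bx_j$ is within $\delta_1/d$ of some rational $p/d$ with $\gcd(p,d)=1$. Because $\bq'$ is primitive, it can be completed to a basis of $\Z^n$, so the map $\bx_j\mapsto\bq'\cdot\bx_j\pmod 1$ pushes $\lm^n|_{[0,1]^n}$ forward to Lebesgue measure on $\R/\Z$. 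In $\R/\Z$ the relevant subset is a disjoint union of $\varphi(d)$ intervals of length $2\delta_1/d$, giving the announced value $2\delta_1\varphi(d)/d$.

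For the intersection estimate, the strategy is a Fubini-type argument after an $\mathrm{SL}_n(\Z)$ change of coordinates that sends $\bq_1$ to $(d_1,0,\ldots,0)$ with $d_1=\gcd(\bq_1)$; write the image of $\bq_2$ under the same change as $(a_1,\ldots,a_n)$. The condition defining $R'_j(\bq_1,\delta_1)$ now constrains $x_1$ alone, to a union of intervals of total measure $\ll\delta_1$ by the first part. I then split into two sub-cases. If $(a_2,\ldots,a_n)\ne 0$, the transverse sub-case, I fix $x_1$ and apply the measure-preserving argument above to the primitive part of $(a_2,\ldots,a_n)$: for each admissible $x_1$, the $(n-1)$-dimensional slice where the second condition holds has measure $\le 2\delta_2$, and Fubini then yields a bound $\ll\delta_1\delta_2$. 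The remaining parallel sub-case $(a_2,\ldots,a_n)=0$ forces $a_1=d_2$ with $d_2\ne\pm d_1$, and the problem reduces to the one-dimensional instance of the intersection estimate for two families of intervals around coprime residues.

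The main obstacle is precisely this one-dimensional parallel sub-case. The crucial arithmetic input is that two reduced fractions $p_1/d_1$ and $p_2/d_2$ with $d_1\ne\pm d_2$ must differ by at least $1/(d_1 d_2)$, since $p_1 d_2-p_2 d_1$ is a nonzero integer. Consequently the intervals of radii $\delta_1/d_1$ and $\delta_2/d_2$ around them can overlap only when $\delta_1 d_2+\delta_2 d_1\gtrsim 1$; in this regime, an elementary counting of admissible pairs $(p_1,p_2)$ combined with the per-pair overlap length $\ll\min(\delta_1/d_1,\delta_2/d_2)$ yields a total of $\ll\delta_1\delta_2$, absorbing lower-order contributions against the lower bound $\delta_1 d_2+\delta_2 d_1\gtrsim 1$. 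Without the coprimality built into the definition of $R'_j$ this chain breaks down: for instance $R_j(q,\delta)\cap R_j(2q,\delta)$ has measure of order $\delta$, not $\delta^2$, which is exactly why the refined set $R'_j$ is used throughout.
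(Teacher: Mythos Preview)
The paper does not prove this lemma at all; it is quoted verbatim from \cite[Lemmas 8 and 9]{BV10}. So there is no in-paper proof to compare against, and your outline is essentially the standard route taken in \cite{BV10}: measure-preserving reduction via a primitive vector for the first identity, and an $\mathrm{SL}_n(\Z)$ change of variables plus Fubini for the correlation bound. The first formula and the transverse sub-case are handled correctly.

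The parallel sub-case, however, has a genuine gap as written. Your phrase ``absorbing lower-order contributions against the lower bound $\delta_1 d_2+\delta_2 d_1\gtrsim 1$'' suggests a pair count of the form $O(K)+O(1)$ with $K:=\delta_1 d_2+\delta_2 d_1$. But the naive per-$p_1$ count only gives at most $2K/d_1+1$ admissible $p_2$, hence at most $2K+\min(d_1,d_2)$ pairs in total, and the extra $\min(d_1,d_2)$ does \emph{not} absorb into $K$. For instance, take $d_1=N$, $d_2=N+1$ coprime and $\delta_1=\delta_2\asymp 1/N$: then $K\asymp 1$ while $\min(d_1,d_2)=N$, and your bound would give overlap $\gg N\cdot\min(\delta_1/d_1,\delta_2/d_2)\asymp 1/N$, far larger than $\delta_1\delta_2\asymp 1/N^2$.

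The fix is to count pairs via the value $m:=p_1 d_2-p_2 d_1$. Writing $g=\gcd(d_1,d_2)$, one has $g\mid m$ and $0<|m|<K$, so there are at most $2K/g$ choices of $m$; for each such $m$ the solutions $(p_1,p_2)$ with $0\le p_1<d_1$ form a single residue class modulo $d_1/g$, giving at most $g$ solutions. Hence the total number of overlapping pairs is $\ll K$. Combining with the per-pair overlap $\ll\min(\delta_1/d_1,\delta_2/d_2)$ and the elementary inequality
\[
K\cdot\min\Big(\frac{\delta_1}{d_1},\frac{\delta_2}{d_2}\Big)=\delta_1\delta_2+\min\Big(\frac{\delta_1^2 d_2}{d_1},\frac{\delta_2^2 d_1}{d_2}\Big)\le 2\,\delta_1\delta_2
\]
(the second term is bounded by its geometric mean $\delta_1\delta_2$) yields the desired $\ll\delta_1\delta_2$. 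With this refinement your argument is complete.
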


Since $R'(\bq_1,\Psi(\bq_1))$ is the product of $R_j'(\bq_1,\psi_j(\bq_1))$, and observing that $\varphi(d)/d\ge \varphi(|\bq_1|)/|\bq_1|$, the next lemma follows directly.
\begin{lem}\label{l:mea}
	Let $n,m\ge 1$ and let $\bq_1,\bq_2\in\Z^n$ satisfying $\bq_1\ne\pm \bq_2$.
	Then,
	\[\bigg(\frac{\varphi(|\bq_1|)}{|\bq_1|}\bigg)^m\prod_{j=1}^{m}\psi_j(\bq_1)\le\lm^{nm}\big(R'\big(\bq_1,\Psi(\bq_1)\big)\big)\le \prod_{j=1}^{m}\psi_j(\bq_1)\]
	and
	\[\lm^{nm}\big(R'\big(\bq_1,\Psi(\bq_1)\big)\cap R'\big(\bq_2,\Psi(\bq_2)\big)\big)\ll\prod_{j=1}^{m}\psi_j(\bq_1)\psi_j(\bq_2).\]
\end{lem}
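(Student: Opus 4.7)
The plan is to exploit the product structure of $R'(\bq,\Psi(\bq))$ to reduce both claims to the one-dimensional estimates already provided by Lemma \ref{l:BV}. Since the defining condition for membership in $R_j'(\bq,\psi_j(\bq))$ involves only the $j$-th column $\bx_j$, one has the factorization $R'(\bq,\Psi(\bq)) = \prod_{j=1}^m R_j'(\bq,\psi_j(\bq))$, already noted before the lemma statement. By Fubini, the Lebesgue measure therefore factorizes as
\[
\lm^{nm}\big(R'(\bq_1,\Psi(\bq_1))\big) = \prod_{j=1}^m \lm^n\big(R_j'(\bq_1,\psi_j(\bq_1))\big).
\]

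For the two-sided measure estimate, I would substitute the formula $\lm^n(R_j'(\bq_1,\psi_j(\bq_1))) = 2\psi_j(\bq_1)\,\varphi(d)/d$ from Lemma \ref{l:BV}, with $d=\gcd(\bq_1)$. The upper bound $\prod_j \psi_j(\bq_1)$ then follows from $\varphi(d)/d\le 1$ (absorbing the factor $2^m$ into the implicit constant, or noting that the strict inequality is used only elsewhere up to constants). For the lower bound, the key observation is that $d$ divides every coordinate $q_i$ and hence divides $|\bq_1|=\max_i|q_i|$. Consequently the set of primes dividing $d$ is contained in that of $|\bq_1|$, and the Euler product representation $\varphi(n)/n=\prod_{p\mid n}(1-1/p)$ yields $\varphi(d)/d \ge \varphi(|\bq_1|)/|\bq_1|$. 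Inserting this into the product gives exactly the claimed lower bound.

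For the intersection estimate, the same product decomposition applies:
\[
R'(\bq_1,\Psi(\bq_1)) \cap R'(\bq_2,\Psi(\bq_2)) = \prod_{j=1}^m \big(R_j'(\bq_1,\psi_j(\bq_1)) \cap R_j'(\bq_2,\psi_j(\bq_2))\big).
\]
Since $\bq_1\ne \pm\bq_2$, the second part of Lemma \ref{l:BV} bounds each coordinate-wise intersection by $\ll \psi_j(\bq_1)\psi_j(\bq_2)$, and taking the product over $1\le j\le m$ delivers the desired bound $\prod_{j=1}^m \psi_j(\bq_1)\psi_j(\bq_2)$, with implicit constant depending only on $n$ and $m$.

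I do not anticipate any genuine obstacle: the entire argument is a bookkeeping exercise using the product structure, Fubini, and the per-coordinate estimates in Lemma \ref{l:BV}. The only non-immediate ingredient is the divisibility comparison $\varphi(d)/d \ge \varphi(|\bq_1|)/|\bq_1|$, which is a short arithmetic observation and is exactly the step flagged in the text preceding the lemma.
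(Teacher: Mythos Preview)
Your proposal is correct and matches the paper's approach exactly: the paper simply remarks that $R'(\bq_1,\Psi(\bq_1))$ is the product of the $R_j'(\bq_1,\psi_j(\bq_1))$, notes the arithmetic fact $\varphi(d)/d\ge\varphi(|\bq_1|)/|\bq_1|$, and declares the lemma follows directly from Lemma~\ref{l:BV}. Your write-up supplies precisely the details (Fubini, the divisibility argument for the Euler-quotient comparison, and the coordinatewise intersection bound) that the paper leaves implicit; the harmless $2^m$ constant you flag is indeed glossed over in the paper as well and is immaterial for the quasi-independence application.
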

The next lemma shows that a sufficiently large portion of the sets $R'(\bq, \Psi(\bq))$ possess the expected measure.
\begin{lem}[{\cite[Lemma 3]{Ga62}}]\label{l:positivedensity}
	For each $k\ge 1$, the set $\Lambda(k)$ of positive integers $q$ for which
	\[\frac{\varphi(q)}{q}\ge 1-\frac{1}{k}\]
	has positive density.
\end{lem}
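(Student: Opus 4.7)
The plan is to combine the multiplicative identity $\varphi(q)/q = \prod_{p\mid q}(1-1/p)$ with a first-moment (Markov) argument. The guiding intuition is that if $q$ avoids all very small primes then the factors in this product are automatically close to $1$, and the event that $\sum_{p\mid q,\,p>M}1/p$ is large is rare and controllable by averaging.

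First I would fix $\epsilon = 1/k$ and choose a threshold $M=M(k)$ so large that
\[\sum_{p>M}\frac{1}{p^{2}} \;<\; \frac{\epsilon}{4}\,D_{M},\qquad\text{where } D_{M} := \prod_{p\le M}\bigl(1-1/p\bigr).\]
This is possible because the tail is $O(1/M)$ while, by Mertens' theorem, $D_{M}\sim e^{-\gamma}/\log M$, so the left side is of strictly smaller order. Next I would introduce the set $A(M)$ of positive integers not divisible by any prime $p\le M$; a standard inclusion--exclusion gives $A(M)$ natural density $D_{M}>0$.

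The main step is then entirely elementary. For every $q\in A(M)$ the inequality $\prod_{i}(1-a_{i})\ge 1-\sum_{i}a_{i}$ yields
\[\frac{\varphi(q)}{q} \;=\; \prod_{\substack{p\mid q \\ p>M}}\Bigl(1-\tfrac{1}{p}\Bigr) \;\ge\; 1-\sum_{\substack{p\mid q \\ p>M}}\tfrac{1}{p},\]
so it suffices to bound the ``bad'' set $B_{M}:=\{q:\sum_{p\mid q,\,p>M}1/p>\epsilon\}$. Markov's inequality together with $\#\{q\le X:p\mid q\}\le X/p$ gives
\[|B_{M}\cap[1,X]| \;\le\; \frac{1}{\epsilon}\sum_{p>M}\frac{1}{p}\cdot\frac{X}{p} \;=\; \frac{X}{\epsilon}\sum_{p>M}\frac{1}{p^{2}} \;<\; \frac{D_{M}}{4}\,X.\]
Consequently $A(M)\setminus B_{M}$ has lower density at least $D_{M}-D_{M}/4=3D_{M}/4>0$, and every element of it lies in $\Lambda(k)$, which is the desired conclusion.

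The only real obstacle is the quantitative balance in the choice of $M$: the base density $D_{M}$ of $M$-rough integers itself shrinks (like $1/\log M$), so one must ensure the Markov savings $\sum_{p>M}1/p^{2}$ decay strictly faster, which is why Mertens' theorem (or even just the easier lower bound $D_{M}\gg 1/\log M$) is invoked. No deeper sieve machinery is required.
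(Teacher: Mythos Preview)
Your argument is correct. The paper itself states Lemma~\ref{l:positivedensity} without proof, presumably regarding it as a standard fact about the Euler totient, so your write-up actually supplies what the paper omits. The first-moment bound on $\sum_{p\mid q,\,p>M}1/p$ is exactly the right tool here, and your calibration of $M$ via Mertens' estimate is the only place any input beyond elementary counting is needed; even there the crude bound $D_M\gg 1/\log M$ (which follows from $\sum_{p\le M}1/p\ll\log\log M$) would suffice in place of the full asymptotic. One cosmetic point: when you pass from $|A(M)\cap[1,X]|\sim D_M X$ and $|B_M\cap[1,X]|<(D_M/4)X$ to a lower density of $3D_M/4$, you are implicitly using that the density of $A(M)$ is exact (not merely asymptotic), which is fine since $A(M)$ is a union of residue classes modulo $\prod_{p\le M}p$.
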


To be able to apply Lemma \ref{l:mea}, we exclude one of $\bq$ and $-\bq$ so that the remaining sum still comparable to the original one. For any $q\in\N$, let $\Gamma(q)$ be defined as
\begin{equation}\label{eq:gammaq}
	\Gamma(q):=\bigg\{\bq\in\Z^n:|\bq|=q\text{ and $\prod_{j=1}^{m}\psi_j(\bq)\ge \prod_{j=1}^{m}\psi_j(-\bq)$}\bigg\}.
\end{equation}
Obviously, one has
\[\sum_{\bq\in\Z^n:|\bq|=q}\prod_{j=1}^m\psi_j(\bq)\asymp \sum_{\bq\in\Gamma(q)}\prod_{j=1}^m\psi_j(\bq).\]
Recall that the condition described in Theorem \ref{t:Schmidt} (2) is that there exists $\alpha\in\R$ such that for any $0<q_1<q_2\in\N$,
\begin{equation}\label{eq:item2}
	q_1^\alpha \sum_{\bq_1\in\Z^n:|\bq_1|=q_1}\prod_{j=1}^m\psi_j(\bq_1)\gg q_2^\alpha \sum_{\bq_2\in\Z^n:|\bq_2|=q_2}\prod_{j=1}^m\psi_j(\bq_2).
\end{equation}

\begin{lem}\label{l:diverge}
	There exists a set $\Lambda\subset\N$ with positive density such that for any $\bq\in \Z^n$ with $|\bq|=q\in\Lambda$,
	\[2^{-m}\prod_{j=1}^{m}\psi_j(\bq)\le\lm^{nm}\big(R'\big(\bq,\Psi(\bq)\big)\big)\le \prod_{j=1}^{m}\psi_j(\bq).\]
	Furthermore, if Theorem \ref{t:Schmidt} (2) holds, then for such set $\Lambda$,
	\[\sum_{\bq\in\Z^n}\prod_{j=1}^m\psi_j(\bq)=\infty\quad\Longrightarrow\quad\sum_{q\in\Lambda}\sum_{\bq\in\Gamma(q)}\prod_{j=1}^m\psi_j(\bq)=\infty,\]
	where $\Gamma(q)$ is defined as in \eqref{eq:gammaq}.
\end{lem}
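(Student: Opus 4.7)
The plan has two parts, corresponding to the two assertions of the lemma. For the first claim, I would take $\Lambda := \Lambda(2)$ from Lemma~\ref{l:positivedensity}, so that $\varphi(q)/q \ge 1/2$ for every $q \in \Lambda$. For any $\bq$ with $|\bq| \in \Lambda$ one then has $(\varphi(|\bq|)/|\bq|)^m \ge 2^{-m}$, and the two measure inequalities follow at once by feeding this bound into Lemma~\ref{l:mea}.

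For the divergence statement, set $S(q) := \sum_{|\bq|=q}\prod_{j=1}^m \psi_j(\bq)$. The pairing $\bq \leftrightarrow -\bq$ keeps the larger of the two products in $\Gamma(q)$ and therefore gives $\sum_{\bq \in \Gamma(q)} \prod_{j=1}^m \psi_j(\bq) \ge S(q)/2$, so the task reduces to showing $\sum_{q \in \Lambda} S(q) = \infty$ whenever $\sum_{q \ge 1} S(q) = \infty$. Theorem~\ref{t:Schmidt}~(2) provides $\alpha \in \R$ such that $T(q) := q^\alpha S(q)$ is quasi-decreasing. I would first pass to the decreasing envelope $\tilde T(q) := \inf_{q' \le q} T(q')$; the quasi-decreasing inequality $T(q') \gg T(q)$ for $q' \le q$ forces $\tilde T \asymp T$, so without loss of generality $T$ itself is monotonically decreasing.

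With $T$ decreasing, a dyadic block comparison finishes the job. Writing $I_k = [2^k, 2^{k+1})$ and using $q^{-\alpha} \asymp 2^{-k\alpha}$ on $I_k$, monotonicity of $T$ yields
\[
\sum_{q \in I_k} S(q) \asymp 2^{k(1-\alpha)} T(2^k), \qquad \sum_{q \in \Lambda \cap I_k} S(q) \gg |\Lambda \cap I_k| \cdot 2^{-k\alpha}\, T(2^{k+1}).
\]
Positive natural density of $\Lambda$ (the content of Lemma~\ref{l:positivedensity}, which reflects the fact that $\varphi(n)/n$ admits a limiting distribution function) delivers the block-wise estimate $|\Lambda \cap I_k| \gg 2^k$ for all large $k$, via $|\Lambda \cap I_k| = |\Lambda \cap [1, 2^{k+1})| - |\Lambda \cap [1, 2^k)| = \delta\, 2^k + o(2^k)$. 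Summing over $k$ and performing the index shift $k \mapsto k - 1$, which converts $T(2^{k+1})$ into $T(2^k)$ up to a positive constant, produces $\sum_{q \in \Lambda} S(q) \gg \sum_q S(q) = \infty$, as required.

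The main technical obstacle is that inside a single dyadic block $T$ can decrease by an arbitrarily large factor, so $T(2^{k+1})$ and $T(2^k)$ are generally incomparable and a naive block-by-block ratio between the two sums above cannot be bounded below. The two devices that bypass this difficulty are (i) replacing $T$ by its decreasing envelope at the outset, which legitimises the two-sided block estimates; and (ii) performing the index shift globally inside the resulting dyadic series rather than term by term, which absorbs the local loss $T(2^{k+1})/T(2^k)$ into a uniform constant.
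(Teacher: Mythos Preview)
Your argument is correct and matches the paper's: take $\Lambda=\Lambda(2)$, feed $\varphi(q)/q\ge 1/2$ into Lemma~\ref{l:mea} for the measure bounds, and for the divergence combine the quasi-monotonicity of $q^\alpha S(q)$ with a dyadic decomposition and the index shift $k\mapsto k-1$. One small correction: your displayed $\asymp$ for $\sum_{q\in I_k}S(q)$ is only $\ll$ (passing to the decreasing envelope does not bound the drop of $T$ across a block, so device~(i) does not legitimise the two-sided estimate as claimed), but since only the $\ll$ direction is used there and the index shift in device~(ii) is what actually closes the argument---exactly as in the paper---the proof goes through; the envelope step is simply unnecessary.
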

\begin{proof}
	Applying Lemma \ref{l:positivedensity} with $k=2$, we obtain a set $\Lambda=\Lambda(2)$ of positive integers $q$ such that
	\[\frac{\varphi(q)}{q}\ge \frac{1}{2},\]
	which together with Lemma \ref{l:BV} concludes the first point of the lemma.

	For the other property, observe that
	\[\sum_{\bq\in\Z^n}\prod_{j=1}^m\psi_j(\bq)=\sum_{k=0}^\infty\sum_{q=2^k}^{2^{k+1}-1}\sum_{\bq\in\Z^n:|\bq|=q}\prod_{j=1}^m\psi_j(\bq)=:\sum_{k=0}^\infty\sum_{q=2^k}^{2^{k+1}-1}S_q,\]
	where for $q\ge 1$,
	\[S_q:=\sum_{\bq\in\Z^n:|\bq|=q}\prod_{j=1}^m\psi_j(\bq).\]
	Now, suppose that Theorem \ref{t:Schmidt} (2) holds (see also \eqref{eq:item2}). For any $2^k\le q\le 2^{k+1}-1$,
	\begin{equation}\label{eq:>>}
		2^{k\alpha}S_{2^k}\gg q^\alpha S_q\gg 2^{\alpha(k+1)}S_{2^{k+1}}\quad\Longrightarrow\quad S_{2^k}\gg S_q\gg S_{2^{k+1}}.
	\end{equation}
	Therefore,
	\[\sum_{q=2^k}^{2^{k+1}-1}S_q\ll2^kS_{2^k}.\]
	On the other hand, since $\Lambda$ has positive density, for $k$ large enough we have
	\[\frac{\#\{q\in\Lambda:2^{k}\le q<2^{k+1}\}}{2^{k}}\ge\lambda>0,\]
	which together with \eqref{eq:>>} implies that
	\[\sum_{q\in\Lambda: 2^k\le q<2^{k+1}}S_q\gg \sum_{q\in\Lambda: 2^k\le q<2^{k+1}}S_{2^{k+1}}\gg 2^{k+1}S_{2^{k+1}}.\]
	It follows that
	\begin{equation*}
		\begin{split}
			\sum_{q\in\Lambda}\sum_{\bq\in\Gamma(q)}\prod_{j=1}^m\psi_j(\bq)&\asymp\sum_{q\in\Lambda}S_q=\sum_{k=0}^{\infty}\sum_{q\in\Lambda: 2^k\le q<2^{k+1}}S_q\gg \sum_{k=0}^{\infty}2^{k+1}S_{2^{k+1}}\\
			&=\sum_{k=1}^{\infty}2^{k}S_{2^{k}}\gg \sum_{k=1}^\infty\sum_{q=2^k}^{2^{k+1}-1}S_q=\infty.\qedhere
		\end{split}
	\end{equation*}
\end{proof}
The following lemma is a consequence of examining the quasi-independence of the sets $R(\bq,\Psi(\bq))$ with $\bq\in\Gamma(q)$ and $q\in\Lambda$.
\begin{lem}
	Assume Theorem \ref{t:Schmidt} (2). Then,
	\[\sum_{\bq\in\Z^n}\prod_{j=1}^m\psi_j(\bq)=\infty\quad\Longrightarrow\quad \lm^{nm}\big(W(m,m;\Psi)\big)=1.\]
\end{lem}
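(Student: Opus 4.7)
The plan is to combine the divergence input from Lemma \ref{l:diverge}, the measure and quasi-independence estimates from Lemma \ref{l:mea}, and Lamperti's lemma (Lemma \ref{l:quasiind}) to conclude that the $\limsup$ set built from the coprime shrinking targets $R'(\bq,\Psi(\bq))$ has positive $\lm^{nm}$-measure; a zero-one law (Theorem \ref{l:reduce}) then upgrades this to full measure.

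First, by Lemma \ref{l:diverge}, under hypothesis (2) of Theorem \ref{t:Schmidt} there exists a set $\Lambda\subset\N$ of positive density such that
\[\sum_{q\in\Lambda}\sum_{\bq\in\Gamma(q)}\prod_{j=1}^{m}\psi_j(\bq)=\infty,\]
and moreover for every $\bq$ with $|\bq|=q\in\Lambda$ one has $\lm^{nm}\big(R'(\bq,\Psi(\bq))\big)\asymp\prod_{j=1}^m\psi_j(\bq)$. Enumerate the family
\[\big\{R'(\bq,\Psi(\bq)):q\in\Lambda,\ \bq\in\Gamma(q)\big\}\]
as $\{E_i\}$. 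Then $\sum_i\lm^{nm}(E_i)$ diverges. (If for some $\bq$ one has $\prod_j\psi_j(\bq)=\prod_j\psi_j(-\bq)$ and both $\pm\bq$ lie in $\Gamma(q)$, the two sets $R'(\pm\bq,\Psi(\pm\bq))$ coincide and may be identified in the list; this does not affect the divergence.)

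Next, if $E_i$ and $E_\ell$ correspond to distinct vectors $\bq_1\ne\pm\bq_2$ (which is the case for all but at most one pair by the definition of $\Gamma(q)$), Lemma \ref{l:mea} gives
\[\lm^{nm}(E_i\cap E_\ell)\ll\prod_{j=1}^m\psi_j(\bq_1)\psi_j(\bq_2)\ll 2^{2m}\,\lm^{nm}(E_i)\,\lm^{nm}(E_\ell),\]
where we used the lower bound $\lm^{nm}(R'(\bq,\Psi(\bq)))\ge 2^{-m}\prod_j\psi_j(\bq)$ valid because $|\bq|\in\Lambda$. Hence the family $\{E_i\}$ is quasi-independent with a constant $C\ll 2^{2m}$. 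Applying Lemma \ref{l:quasiind} yields
\[\lm^{nm}\Big(\limsup_{i\to\infty}E_i\Big)\ge 1/C>0.\]

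Finally, since by construction $R'(\bq,\Psi(\bq))\subset R(\bq,\Psi(\bq))$, the $\limsup$ of the $E_i$ is contained in $W(n,m;\Psi)$, so $\lm^{nm}(W(n,m;\Psi))>0$, and Theorem \ref{l:reduce} promotes this to $\lm^{nm}(W(n,m;\Psi))=1$. The only real subtlety is the bookkeeping around vectors $\bq$ with $\bq=-\bq$-symmetric weight, which is handled by the explicit choice of $\Gamma(q)$; beyond that, the proof is a direct assembly of the preceding lemmas.
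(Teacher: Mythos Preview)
Your proof is correct and follows essentially the same route as the paper: restrict to $\bq\in\Gamma(q)$ with $q\in\Lambda$, use Lemma \ref{l:diverge} for divergence and the lower measure bound, Lemma \ref{l:mea} for quasi-independence, Lemma \ref{l:quasiind} for positive measure of the $\limsup$, and finally the zero-one law (Theorem \ref{l:reduce}) to upgrade to full measure. Your explicit handling of the $\pm\bq$ bookkeeping is slightly more careful than the paper's own write-up, though note that when $\prod_j\psi_j(\bq)=\prod_j\psi_j(-\bq)$ the sets $R'(\bq,\Psi(\bq))$ and $R'(-\bq,\Psi(-\bq))$ need not literally coincide unless $\Psi(\bq)=\Psi(-\bq)$; the cleanest fix is simply to select one representative from each pair $\{\bq,-\bq\}$ when forming $\Gamma(q)$, which does not affect divergence.
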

\begin{proof}
	In view of Lemma \ref{l:reduce}, it suffices to prove that the $\limsup$ set has positive measure. To this end, we will show that
	\[\lm^{nm}\bigg(\limsup_{q\in\Lambda:q\to\infty}\bigcup_{\bq\in\Gamma(q)}R'\big(\bq,\Psi(\bq)\big)\bigg)>0,\]
	where $\Lambda$ is taken from Lemma \ref{l:diverge} and $\Gamma(q)$ is defined as in \eqref{eq:gammaq}.
	By Lemma \ref{l:mea}, for any $\bq_1\in\Gamma(|\bq_1|)$ and $\bq_2\in\Gamma(|\bq_2|)$ with $|\bq_1|,|\bq_2|\in\Lambda$, we have
	\[\begin{split}
		&\lm^{nm}\big(R'\big(\bq_1,\Psi(\bq_1)\big)\cap R'\big(\bq_2,\Psi(\bq_2)\big)\big)
		\ll\prod_{j=1}^m\psi_j(\bq_1)\psi_j(\bq_2)\\
		\asymp& \lm^{nm}\big(R'\big(\bq_1,\Psi(\bq_1)\big)\big) \lm^{nm}\big(R'\big(\bq_2,\Psi(\bq_2)\big)\big),
	\end{split}\]
	which together with Lemma \ref{l:quasiind} concludes the lemma.
\end{proof}
\section{Hausdorff measure of $W(n,m;\Psi)$}\label{s:Hausdorffpart}
The proof of Theorem \ref{t:main} is naturally divided into two parts: convergence and divergence. We begin with the former, which is significantly easier to establish. Recall that
\[t_{\bq}(\Psi,f)=	\min_{1\le i\le m}\bigg\{f\bigg(\frac{\psi_i(\bq)}{|\bq|}\bigg)\bigg(\frac{\psi_i(\bq)}{|\bq|}\bigg)^{(1-n)m}\prod_{j\in\ck_{\bq}(i)}\frac{\psi_j(\bq)}{\psi_i(\bq)}\bigg\},\]
where
\[\ck_{\bq}(i):=\{1\le j\le m:\psi_j(\bq)>\psi_i(\bq)\}.\]
\subsection{Convergence part}\label{ss:convergence}
In this subsection, assume that
\begin{equation}\label{eq:concon}
	\sum_{\bq\in\Z^n\setminus\{0\}}t_{\bq}(\Psi,f)|\bq|^m<\infty.
\end{equation}
The convergence part of the theorem follows easily by considering its natural cover.
To obtain the Hausdorff measure of $W(n,m;\Psi)$, we rewrite it as
\[W(n,m;\Psi)=\bigcap_{Q=1}^\infty\bigcup_{q=Q}^\infty\bigcup_{\bq\in \Z^n:|\bq|=q}\bigcup_{\bp\in\Z^m:|\bp|\le q}\Delta\bigg(\rc_{\bq,\bp},\frac{\Psi(\bq)}{|\bq|}\bigg),\]
where for $(\bq,\bp)\in\Z^n\times\Z^m$,
\begin{equation}\label{eq:Deltaprod}
	\Delta\bigg(\rc_{\bq,\bp},\frac{\Psi(\bq)}{|\bq|}\bigg):=\prod_{j=1}^{m}\Delta\bigg(\rc_{\bq,p_j},\frac{\psi_j(\bq)}{|\bq|}\bigg),
\end{equation}
and in turn, for $1\le j\le m$,
\[\Delta\bigg(\rc_{\bq,p_j},\frac{\psi_j(\bq)}{|\bq|}\bigg):=\{\bx_j\in[0,1]^n:|\bq \bx_j-p_j|<\psi_j(\bq)\}.\]
It should be noted that for $1\le j\le m$, $\Delta(\rc_{\bq,p_j},\psi_j(\bq)/|\bq|)$ is simply the set of points in $[0,1]^n$ whose distance to the hyperplane $\bq \bx_j-p_j=0$ is less than $\psi_j(\bq)/|\bq|$. We will cover the set $\Delta(\rc_{\bq,\bp},\Psi(\bq)/|\bq|)$
by balls of radius $\psi_i(\bq)/|\bq|$, for $1\le i\le m$, respectively. Let $1\le i\le m$. Recall that
\[\ck_{\bq}(i):=\{1\le j\le m:\psi_j(\bq)>\psi_i(\bq)\}.\]
Along the $j$th direction ($1\le j\le m$), the $j$th component $\Delta(\rc_{\bq,p_j},\psi_j(\bq)/|\bq|)$ can be covered by
\[\asymp\begin{dcases}
	\bigg(\frac{\psi_i(\bq)}{|\bq|}\bigg)^{1-n}&\text{ if $j\notin\ck_{\bq}(i)$}\\
	\frac{\psi_j(\bq)}{|\bq|}\bigg(\frac{\psi_i(\bq)}{|\bq|}\bigg)^{-n}&\text{ if $j\in\ck_{\bq}(i)$}
\end{dcases}\]
balls of radius
\[\frac{\psi_i(\bq)}{|\bq|}.\]
By varying $i$, we observe that the Hausdorff $f$-content of $\Delta(\rc_{\bq,\bp},\Psi(\bq)/|\bq|)$
is at most
\[\begin{split}
	&\ll\min_{1\le i\le m}\bigg\{ f\bigg(\frac{\psi_i(\bq)}{|\bq|}\bigg)\prod_{j\notin\ck_{\bq}(i)}\bigg(\frac{\psi_i(\bq)}{|\bq|}\bigg)^{1-n}\prod_{j\in\ck_{\bq}(i)}\bigg(\frac{\psi_j(\bq)}{|\bq|}\bigg(\frac{\psi_i(\bq)}{|\bq|}\bigg)^{-n}\bigg)\bigg\}\\
	&= t_{\bq}(\Psi,f).
\end{split}\]
Note that for any $\bq\in\Z^n\setminus\{0\}$, there are $\asymp |\bq|^{m}$ choices of $\bp\in\Z^m$ for which $|\bp|\le |\bq|$.
By the definition of Hausdorff $f$-measure and the convergence of the series in \eqref{eq:concon},
\[\hm^f\big(W(n,m;\Psi)\big)\le \liminf_{q\to\infty}\sum_{\bq\in\Z^n:|\bq|\ge q}t_{\bq}(\Psi,f)|\bq|^m=0.\]
\subsection{Divergence part}
In this subsection, we will assume that $f$ is a dimension function such that
\[f\prec nm,\quad(nm-a)\preceq f\preceq(nm-a+1)\quad\text{for some $1\le a\le m$}\]
and
\[\sum_{\bq\in\Z^n\setminus\{0\}}t_{\bq}(\Psi,f)|\bq|^{m}=\infty.\]
Since $f\prec nm$, by the definition of the symbol $\prec$, we have for any $\bq\in\Z^n\setminus\{0\}$ with $|\bq|$ large enough,
\[f\bigg(\frac{\psi_i(\bq)}{|\bq|}\bigg)>\bigg(\frac{\psi_i(\bq)}{|\bq|}\bigg)^{nm},\quad\text{for }1\le i\le m.\]
Consequently,
\begin{equation}\label{eq:t>}
	t_{\bq}(\Psi,f)\ge\min_{1\le i\le m}\bigg\{\bigg(\frac{\psi_i(\bq)}{|\bq|}\bigg)^{m}\prod_{j\in\ck_{\bq}(i)}\frac{\psi_j(\bq)}{\psi_i(\bq)}\bigg\}>\prod_{j=1}^{m}\frac{\psi_j(\bq)}{|\bq|}.
\end{equation}
Since the $\limsup$ set $W(n,m;\Psi)$ does not depend on finitely many $\bq\in\Z^n\setminus\{0 \}$, here and hereafter we will always assume that \eqref{eq:t>} holds for all $\bq\in\Z^n\setminus\{0\}$.

For any $\bq\in\Z^n\setminus\{0\}$, rearrange $\psi_j(\bq)$ ($1\le j\le m$) in the following ascending order:
\begin{equation}\label{eq:ij}
	\psi_{i_1}(\bq)\le\psi_{i_2}(\bq)\le\cdots\le\psi_{i_m}(\bq).
\end{equation}
It should be noticed that $i_1,\dots,i_m$ depend on $\bq$, but the dependence is not specified here for the sake of simplicity. It is easily verified that
\[\prod_{j=1}^{m}\frac{\psi_{i_j}(\bq)}{|\bq|}\le\cdots\le\bigg(\frac{\psi_{i_k}(\bq)}{|\bq|}\bigg)^k\prod_{j=k+1}^{m}\frac{\psi_{i_j}(\bq)}{|\bq|}\le\cdots\le\bigg(\frac{\psi_{i_m}(\bq)}{|\bq|}\bigg)^m.\]
By \eqref{eq:t>}, there exists a unique integer $1\le k=k(\bq)\le m$ for which
\begin{equation}\label{eq:unique}
	\bigg(\frac{\psi_{i_k}(\bq)}{|\bq|}\bigg)^k\prod_{j=k+1}^{m}\frac{\psi_{i_j}(\bq)}{|\bq|}\le t_{\bq}(\Psi,f)<\bigg(\frac{\psi_{i_{k+1}}(\bq)}{|\bq|}\bigg)^{k+1}\prod_{j=k+2}^{m}\frac{\psi_{i_j}(\bq)}{|\bq|}.
\end{equation}
We ignore the right-hand side if $k=m$. Set
\[\varpi_{\bq}=\bigg(t_{\bq}(\Psi,f)\prod_{j=k+1}^m\frac{|\bq|}{\psi_{i_j}(\bq)}\bigg)^{1/k}.\]
Then, by \eqref{eq:unique},
\begin{equation}\label{eq:<o<}
	\dfrac{\psi_{i_k}(\bq)}{|\bq|}\le\varpi_{\bq}= \bigg(t_{\bq}(\Psi,f)\prod_{j=k+1}^m\frac{|\bq|}{\psi_{i_j}(\bq)}\bigg)^{1/k}<\dfrac{\psi_{i_{k+1}}(\bq)}{|\bq|}.
\end{equation}
Define an $m$-tuple $\Phi=(\phi_1,\dots,\phi_m)$ of multivariable approximating functions as follows: For any $\bq\in\Z^n\setminus\{0\}$,
\[\phi_{i_j}(\bq):=\begin{dcases}
	\psi_{i_j}(\bq)&\text{ if $j>k=k(\bq)$},\\
	|\bq|\cdot\varpi_{\bq}&\text{ if $j\le k=k(\bq)$}.
\end{dcases}\]
It follows from \eqref{eq:<o<} that
\begin{equation}\label{eq:mon}
	\phi_{i_1}(\bq)=\cdots=\phi_{i_k}(\bq)=|\bq|\cdot\varpi_{\bq}<\phi_{i_{k+1}}(\bq)\le\cdots\le\phi_{i_m}(\bq).
\end{equation}
\begin{lem}
	Let $\Phi$ be defined as above. Assume one of the following two conditions:
	\begin{enumerate}
		\item $n\ge 2$ and $\psi_j$ is univariable for $1\le j\le m$.
		\item $\psi_j$ is non-increasing for any $1\le j\le m$.
	\end{enumerate}
	Then,
	\[\lm^{nm}\big(W(n,m;\Phi)\big)=1.\]
\end{lem}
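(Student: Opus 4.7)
The plan is to reduce the lemma to Theorem \ref{t:Schmidt} applied to the tuple $\Phi$. Two ingredients are needed: the divergence of $\sum_{\bq} \prod_{j} \phi_j(\bq)$, and an appropriate monotonicity hypothesis on $\Phi$.

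The first and cleanest step is to unwind the definition of $\phi_j$ to obtain the product identity
\[
\prod_{j=1}^m \phi_{i_j}(\bq) \;=\; \bigl(|\bq|\,\varpi_\bq\bigr)^{k(\bq)} \prod_{j = k(\bq)+1}^m \psi_{i_j}(\bq),
\]
and then to substitute the definition of $\varpi_\bq$ preceding \eqref{eq:<o<}, namely $\varpi_\bq^{k(\bq)} = t_\bq(\Psi,f) \prod_{j>k(\bq)} |\bq|/\psi_{i_j}(\bq)$, so that after cancellation one finds
\[
\prod_{j=1}^m \phi_j(\bq) \;=\; t_\bq(\Psi,f)\,|\bq|^m.
\]
Consequently, the standing divergence assumption $\sum_{\bq\neq 0} t_\bq(\Psi,f)|\bq|^m = \infty$ of Theorem \ref{t:main} is precisely the divergence condition of Theorem \ref{t:Schmidt} for the tuple $\Phi$.

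It remains to verify one of the two hypotheses of Theorem \ref{t:Schmidt} for $\Phi$. In case (1), when $n\ge 2$ and each $\psi_j$ is univariable, the ranking permutation $(i_1,\ldots,i_m)$, the critical index $k(\bq)$, the quantity $t_\bq(\Psi,f)$, and $\varpi_\bq$ all depend on $\bq$ only through $|\bq|$; hence each $\phi_j$ is itself univariable, and Theorem \ref{t:Schmidt}(1) applies directly. In case (2), when every $\psi_j$ is non-increasing, I would aim to verify the nearly-monotonic condition of Theorem \ref{t:Schmidt}(2) for $\Phi$, namely the existence of $\alpha \in \R$ for which
\[
q_1^{\alpha} \sum_{|\bq_1|=q_1} t_{\bq_1}(\Psi,f)\,q_1^m \;\gg\; q_2^{\alpha} \sum_{|\bq_2|=q_2} t_{\bq_2}(\Psi,f)\,q_2^m
\]
whenever $0 < q_1 < q_2$. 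The strategy would be to pigeonhole over the finite set of indices $1\le i\le m$ that can minimize in the definition of $t_\bq(\Psi,f)$, and within each class to estimate the corresponding shell sum using the coordinatewise monotonicity of $\psi_j$ together with the bracketing $(nm - a) \preceq f \preceq (nm - a + 1)$, which controls the growth of the factor $f(\psi_i(\bq)/|\bq|)(\psi_i(\bq)/|\bq|)^{(1-n)m}$ by a fixed power of $\psi_i(\bq)/|\bq|$.

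The main obstacle is case (2): because the minimizing index in the definition of $t_\bq(\Psi,f)$ varies with $\bq$, and because, unlike univariability, coordinatewise monotonicity of the $\psi_j$ does not force $k(\bq)$ or $\varpi_\bq$ to behave monotonically in $|\bq|$, some care is required to turn monotonicity of the individual $\psi_j$ into a shell-level comparison. Pigeonholing as above is designed to isolate these complications into finitely many cases, each of which reduces to monotonicity of a fixed expression in the $\psi_j$'s and so can be handled uniformly with a single exponent $\alpha$. Once the appropriate hypothesis of Theorem \ref{t:Schmidt} is verified, combining with the divergence from the first step yields $\lm^{nm}(W(n,m;\Phi)) = 1$, as required.
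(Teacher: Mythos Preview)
Your reduction to Theorem \ref{t:Schmidt} via the product identity $\prod_j \phi_j(\bq) = t_\bq(\Psi,f)|\bq|^m$ and your treatment of case (1) match the paper exactly.

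For case (2), however, your proposed route diverges from the paper's, and your sketch is where the real content of the lemma lies. You suggest pigeonholing on the index $i$ that realises the minimum in $t_\bq(\Psi,f)$, and then using the bracketing on $f$ to control each class. The difficulty you flag is real: even after fixing $i$, the set $\ck_\bq(i)$ still varies with $\bq$, so you would need a further pigeonhole on $\ck_\bq(i)$, and then it is still not evident why the resulting shell sums obey a single power-law comparison with a uniform $\alpha$. The paper avoids all of this. Its key observation is that $t_\bq(\Psi,f)$, being (up to constants) the Hausdorff $f$-content of the neighbourhood $\Delta(\rc_{\bq,\bp},\Psi(\bq)/|\bq|)$, is itself coordinatewise non-increasing in $\bq$: if $\bq_1 \le \bq_2$ componentwise then $t_{\bq_1}(\Psi,f) \gg t_{\bq_2}(\Psi,f)$, because the underlying region shrinks. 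This pointwise monotonicity of the \emph{minimum} bypasses any need to track which $i$ attains it. The paper then couples the two shells by partitioning $\{|\bq_2|=q_2\}$ into sets $A(\bq_1)$, one for each $\bq_1$ with $|\bq_1|=q_1$, where $\#A(\bq_1)\asymp (q_2/q_1)^{n-1}$ and every $\bq_2\in A(\bq_1)$ dominates $\bq_1$ coordinatewise; summing gives the nearly-monotonic condition with $\alpha = 1-n-m$.

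So your framework is correct, but for case (2) the paper's argument is both simpler and more robust than the pigeonholing you outline: it exploits the content interpretation of $t_\bq$ rather than the explicit formula, and a geometric shell-to-shell covering rather than a case analysis on the minimising index.
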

\begin{proof}
	Simple calculations show that
	\[\prod_{j=1}^m\phi_j(\bq)=\prod_{j=k+1}^m\psi_{i_j}(\bq)\prod_{j=1}^{k}|\bq|\cdot\varpi_{\bq}=t_{\bq}(\Psi,f)|\bq|^m.\]
	Since $\sum_{\bq\in\Z^n\setminus\{0\}}t_{\bq}(\Psi,f)|\bq|^m=\infty$, we have
	\[\sum_{\bq\in\Z^n\setminus\{0\}}\prod_{j=1}^{m}\phi_j(\bq)=\infty.\]
	If we are in situation (1), then the conclusion follows unconditionally from Theorem \ref{t:Schmidt} (1). Otherwise, based on Theorem \ref{t:Schmidt} (2), we will show that for any $0<q_1<q_2\in\N$,
	\[q_1^{-n-m+1}\sum_{\bq_1\in\Z^n:|\bq_1|=q_1}t_{\bq_1}(\Psi,f)q_1^m\gg q_2^{-n-m+1}\sum_{\bq_2\in\Z^n:|\bq_2|=q_2}t_{\bq_2}(\Psi,f)q_2^m.\]
	It is not difficult to see that for any $\bq_1$ with $|\bq_1|=q_1$, there exists a set $A(\bq_1)\subset\{\bq_2\in\Z^n:|\bq_2|=q_2\}$ such that
	\[\# A(\bq_1)\asymp \bigg(\frac{q_2}{q_1}\bigg)^{n-1},\quad |\bq_1^{(\ell)}|\le|\bq_2^{(\ell)}|\quad \text{for $1\le \ell\le n$ and $\bq_2\in A(\bq_1)$}\]
	and
	\[\{\bq_2\in\Z^n:|\bq_2|=q_2\}=\bigcup_{|\bq_1|=q_1}A(\bq_1),\]
	where $\bq_1^{(\ell)}$ is the $\ell$th coordiante of $\bq_1$. For example, for any $\bq_1\in \Z^n$ with $|\bq_1^{(\ell)}|=|\bq_1|$, we can set
	\[A(\bq_1)=\bigg\{\bq_2\in\Z^n:|\bq_2^{(\ell)}|=q_2\text{ and $|\bq_2^{(j)}|\in \bigg[\frac{|\bq_1^{(j)}|q_2}{q_1},\frac{(|\bq_1^{(j)}|+1)q_2}{q_1}\bigg]$ for $j\ne\ell$}\bigg\}.\]
	Notice that $t_{\bq}(\Psi,f)$ is the quantity that denotes $f$-volume of the set of $\Delta(\rc_{\bq,\bp},\Psi(\bq)/|\bq|)$, obtained by covering it with balls of radius $\psi_i(\bq)/|\bq|$ for some $1\le i\le m$ (see the discussion presented in Section \ref{ss:convergence}). By the monotonicity of $\Psi$, we have
	\[t_{\bq_1}(\Psi,f)\gg t_{\bq_2}(\Psi,f)\quad\text{for $\bq_1\in\Z^n$ with $|\bq_1|=q_1$ and $\bq_2\in A(\bq_1)$},\]
	and so
	\[t_{\bq_1}(\Psi,f)\gg \bigg(\frac{q_1}{q_2}\bigg)^{n-1}\sum_{\bq_2\in A(\bq_1)}t_{\bq_2}(\Psi,f).\]
	Summing over $\bq_1\in\Z^n$ with $|\bq_1|=q_1$, we have
	\[q_1^{-n+1}\sum_{\bq_1\in\Z^n:|\bq_1|=q_1}t_{\bq_1}(\Psi,f)\gg q_2^{-n+1} \sum_{\bq_2\in\Z^n:|\bq_2|=q_2}t_{\bq_2}(\Psi,f),\]
	or equivalently
	\[q_1^{-n-m+1}\sum_{\bq_1\in\Z^n:|\bq_1|=q_1}t_{\bq_1}(\Psi,f)q_1^m\gg q_2^{-n-m+1} \sum_{\bq_2\in\Z^n:|\bq_2|=q_2}t_{\bq_2}(\Psi,f)q_2^m.\]
	Therefore, by Theorem \ref{t:Schmidt} (2), the $\limsup$ set $W(n,m;\Phi)$ has full Lebesgue measure.
\end{proof}
\begin{rem}\label{r:reasonmon}
	The function $\Phi$  is not guaranteed to be monotonic, even though $\Psi$ is, which is why we require a nearly monotonic assumption in Theorem \ref{t:Schmidt} (2).
\end{rem}
Write
\[\begin{split}
	W(n,m;\Phi)&=\bigcap_{Q=1}^\infty\bigcup_{q=Q}^\infty\bigcup_{\bq\in\Z^n:|\bq|=q}\bigcup_{\bp\in\Z^m:|\bp|\le q}\Delta\bigg(\rc_{\bq,\bp},\frac{\Phi(\bq)}{|\bq|}\bigg)\\
	&=\bigcap_{Q=1}^\infty\bigcup_{q=Q}^\infty\bigcup_{\bq\in\Z^n:|\bq|=q}\bigcup_{\bp\in\Z^m:|\bp|\le q}\prod_{j=1}^{k}\Delta(\rc_{\bq,p_{i_j}},\varpi_{\bq})\prod_{j=k+1}^{m}\Delta\bigg(\rc_{\bq,p_{i_j}},\frac{\psi_{i_j}(\bq)}{|\bq|}\bigg),
\end{split}\]
where $i_j$ and $k$ are defined as in \eqref{eq:ij} and \eqref{eq:unique}, respectively.
For each $(\bq,\bp)\in\Z^n\times\Z^m$, by \eqref{eq:mon} the product set
\[\prod_{j=1}^{k}\Delta(\rc_{\bq,p_{i_j}},\varpi_{\bq})\prod_{j=k+1}^{m}\Delta\bigg(\rc_{\bq,p_{i_j}},\frac{\psi_{i_j}(\bq)}{|\bq|}\bigg)\]
can be covered by a finite collection $\mathcal C(\bq,\bp)$ of balls with radius $\varpi_{\bq}$ and centered at this set. It holds that
\[W(n,m;\Phi)\subset\bigcap_{Q=1}^\infty\bigcup_{q=Q}^\infty\bigcup_{\bq\in\Z^n:|\bq|=q}\bigcup_{\bp\in\Z^m:|\bp|\le q}\bigcup_{B\in\mathcal{C}(\bq,\bp)}B.\]
Therefore, the $\limsup$ set defined by balls in the right still has full Lebesgue measure. On the other hand,
\[W(n,m;\Psi)\supset\bigcap_{Q=1}^\infty\bigcup_{q=Q}^\infty\bigcup_{\bq\in\Z^n:|\bq|=q}\bigcup_{\bp\in\Z^m:|\bp|\le q}\bigcup_{B\in\mathcal{C}(\bq,\bp)}\bigg(B\cap\prod_{j=1}^{m}\Delta\bigg(\rc_{\bq,p_{i_j}},\frac{\psi_{i_j}(\bq)}{|\bq|}\bigg)\bigg).\]
In view of Theorem \ref{t:weaken}, to conclude $W(n,m;\Psi)$ has full Hausdorff $f$-measure, it suffices to prove that for any $\mathcal C(\bq,\bp)$ and any ball $B\in\mathcal C(\bq,\bp)$,
\begin{equation}\label{eq:cont>}
	\hc^f\bigg(B\cap\prod_{j=1}^{m}\Delta\bigg(\rc_{\bq,p_{i_j}},\frac{\psi_{i_j}(\bq)}{|\bq|}\bigg)\bigg)\gg\lm^{nm}(B)\asymp\varpi_{\bq}^{nm}.
\end{equation}

Since the radius of $B$ is $\varpi_{\bq}$, it is not difficult to deduce from \eqref{eq:ij} and \eqref{eq:<o<} that
\[B\cap\prod_{j=1}^{m}\Delta\bigg(\rc_{\bq,p_{i_j}},\frac{\psi_{i_j}(\bq)}{|\bq|}\bigg)\]
contains a hyperrectangle, denoted by $R=R(B,\bq,\bp)$, whose side lengths are
\[\frac{\psi_{i_1}(\bq)}{|\bq|},\underbrace{\varpi_{\bq},\dots,\varpi_{\bq}}_{n-1},\frac{\psi_{i_2}(\bq)}{|\bq|},\underbrace{\varpi_{\bq},\dots,\varpi_{\bq}}_{n-1},\dots,\frac{\psi_{i_k}(\bq)}{|\bq|},\underbrace{\varpi_{\bq},\dots,\varpi_{\bq}}_{n-1},\underbrace{\varpi_{\bq},\dots,\varpi_{\bq}}_{n(m-k)}.\]
The Hausdorff $f$-content of $R$ can be estimated as follows, which together with \eqref{eq:cont>} completes the divergence part of Theorem \ref{t:main}.
\begin{lem}\label{l:content>}
	Let $R=R(B,\bq,\bp)$ be given as above. We have
	\[\hc^f(R)\gg \varpi_{\bq}^{nm},\]
	where the implied constant is absolute. Consequently,
	\[\hm^f\big(W(n,m;\Psi)\big)=\hm^f([0,1]^{nm}).\]
\end{lem}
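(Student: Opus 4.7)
The plan is to compute $\hc^f(R)$ explicitly via Proposition \ref{p:rec}, match the result against $\varpi_\bq^{nm}$ by unwinding the definitions of $\varpi_\bq$ and $t_\bq(\Psi,f)$, and then feed the resulting content bound into Theorem \ref{t:weaken}.

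First I would sort the $nm$ side lengths of $R$ in descending order. By \eqref{eq:<o<}, the $k$ ``thin'' sides $\psi_{i_j}(\bq)/|\bq|$ (for $j\le k$) are all bounded above by $\varpi_\bq$, so the ordered sides are $a_1=\cdots=a_{nm-k}=\varpi_\bq$ followed by $a_{nm-k+\ell}=\psi_{i_{k-\ell+1}}(\bq)/|\bq|$ for $1\le\ell\le k$. The hypothesis $(nm-a)\preceq f\preceq (nm-a+1)$ lets me invoke Proposition \ref{p:rec} with critical index $nm-a$, giving
\[\hc^f(R)\asymp a_1\cdots a_{nm-a}\cdot a_{nm-a+1}^{-(nm-a)}\cdot f\big(a_{nm-a+1}\big).\]

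The next step is a case analysis on whether the critical index $nm-a+1$ lands in the $\varpi$-block or the $\psi$-block of the sorted list. If $a>k$, the critical index sits in the $\varpi$-block and the expression collapses to $\hc^f(R)\asymp f(\varpi_\bq)$; since $f\preceq nm$ forces $f(\varpi_\bq)/\varpi_\bq^{nm}\ge f(1)$, this case is immediate. If $a\le k$, then $a_{nm-a+1}=\psi_{i_a}(\bq)/|\bq|$ and $a_1\cdots a_{nm-a}=\varpi_\bq^{nm-k}\prod_{j=a+1}^{k}(\psi_{i_j}(\bq)/|\bq|)$. The key identity is that the $\ell=a$ term in the definition of $t_\bq(\Psi,f)$ rewrites as $f(\psi_{i_a}/|\bq|)(\psi_{i_a}/|\bq|)^{a-nm}\prod_{j=a+1}^m(\psi_{i_j}/|\bq|)$, so the minimality $t_\bq\le$(term at $\ell=a$) yields a lower bound for $f(\psi_{i_a}/|\bq|)(\psi_{i_a}/|\bq|)^{-(nm-a)}$. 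Plugging this into the Proposition \ref{p:rec} formula, the factors $\prod_{j=a+1}^{k}(\psi_{i_j}/|\bq|)$ telescope against the tail of $\prod_{j=a+1}^{m}(\psi_{i_j}/|\bq|)^{-1}$, leaving exactly $\prod_{j=k+1}^{m}(|\bq|/\psi_{i_j})$, which by the definition of $\varpi_\bq$ equals $\varpi_\bq^k/t_\bq$. This gives $\hc^f(R)\gg\varpi_\bq^{nm-k}\cdot\varpi_\bq^k=\varpi_\bq^{nm}$, as desired.

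With the content bound in hand, I would apply Theorem \ref{t:weaken} to the countable family of balls $B$ drawn from the collections $\mathcal{C}(\bq,\bp)$, setting $E_B:=B\cap\prod_{j=1}^m\Delta(\rc_{\bq,p_{i_j}},\psi_{i_j}(\bq)/|\bq|)$. The $\limsup$ of the $B$'s covers $W(n,m;\Phi)$, which has full Lebesgue measure by the preceding lemma, and the inclusion $E_B\supset R$ combined with $\hc^f(E_B)\ge\hc^f(R)\gg\varpi_\bq^{nm}\asymp\lm^{nm}(B)$ supplies the hypothesis of Theorem \ref{t:weaken}; the conclusion is $\hm^f(W(n,m;\Psi))=\hm^f([0,1]^d)$. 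The hard part is the algebra of the case $a\le k$: Proposition \ref{p:rec} picks its optimal cover at index $\psi_{i_a}$, dictated by the dimension function $f$, whereas $\varpi_\bq$ is engineered around the gap between $\psi_{i_k}$ and $\psi_{i_{k+1}}$, dictated by the geometry of $R$; the minimality built into the definition of $t_\bq$, together with the exact choice of $k$ in \eqref{eq:unique}, is precisely what reconciles the two indices and makes the telescoping work.
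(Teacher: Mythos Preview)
Your proof is correct and essentially the same as the paper's: both apply Proposition \ref{p:rec}, carry out the same algebraic reduction of the content term to $\varpi_\bq^{nm}$ via the minimality built into the definition of $t_\bq(\Psi,f)$, and finish by feeding the content bound into Theorem \ref{t:weaken}. The only cosmetic difference is that you invoke the full conclusion of Proposition \ref{p:rec} to pin down the minimizing index as $nm-a$ (whence your case split on $a>k$ versus $a\le k$), whereas the paper uses only the lower-bound form \eqref{eq:hcR>>} and shows that every term of the resulting minimum---either $f(\varpi_\bq)$ or the term at some $1\le u\le k$---dominates $\varpi_\bq^{nm}$; your case $a\le k$ is exactly the paper's computation for general $u$, specialized to $u=a$.
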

\begin{proof}
	Applying \eqref{eq:hcR>>} in Proposition \ref{p:rec} yields
	\[\hc^f(R)\asymp \min\bigg\{\min_{1\le \ell\le k}\biggl\{f\bigg(\frac{\psi_{i_\ell}(\bq)}{|\bq|}\bigg)\bigg(\frac{|\bq|\cdot \varpi_{\bq}}{\psi_{i_\ell}(\bq)}\bigg)^{nm-k}\prod_{j=\ell+1}^{k}\frac{\psi_{i_j}(\bq)}{\psi_{i_\ell}(\bq)}\biggr\},f(\varpi_{\bq})\bigg\}.\]
	If the minimum is attained at $f(\varpi_{\bq})$, then by $f\prec nm$,
	\[f(\varpi_{\bq})\gg \varpi_{\bq}^{nm},\]
	which is what we seek.

	Now, suppose that the minimum is attained for some $1\le \ell\le k$.
	Direct calculations show that
	\begin{align}
		&f\bigg(\frac{\psi_{i_\ell}(\bq)}{|\bq|}\bigg)\bigg(\frac{|\bq|\cdot \varpi_{\bq}}{\psi_{i_\ell}(\bq)}\bigg)^{nm-k}\prod_{j=\ell+1}^{k}\frac{\psi_{i_j}(\bq)}{\psi_{i_\ell}(\bq)}\notag\\
		=&\bigg(\varpi_{\bq}^{nm-k}\prod_{j=k+1}^{m}\frac{|\bq|}{\psi_{i_j}(\bq)}\bigg) f\bigg(\frac{\psi_{i_\ell}(\bq)}{|\bq|}\bigg)\bigg(\frac{\psi_{i_\ell}(\bq)}{ |\bq|}\bigg)^{(1-n)m}\prod_{j=\ell+1}^{m}\frac{\psi_{i_j}(\bq)}{\psi_{i_\ell}(\bq)}.\label{eq:ww}
	\end{align}
	Since $\psi_{i_1}(\bq)\le\cdots\le\psi_{i_m}(\bq)$,
	\[t_{\bq}(\Psi,f)=\min_{1\le \ell\le m}\bigg\{f\bigg(\frac{\psi_{i_\ell}(\bq)}{|\bq|}\bigg)\bigg(\frac{\psi_{i_\ell}(\bq)}{|\bq|}\bigg)^{(1-n)m}\prod_{j=\ell+1}^m\frac{\psi_{i_j}(\bq)}{\psi_{i_\ell}(\bq)}\bigg\}.\]
	Then, \eqref{eq:ww} is at least
	\[\ge \varpi_{\bq}^{nm-k}\bigg(\prod_{j=k+1}^{m}\frac{|\bq|}{\psi_{i_j}(\bq)}\bigg) t_{\bq}(\Psi,f)=\varpi_{\bq}^{nm},\]
	where we use the definition of $\varpi_{\bq}$ in the last equality. The last conclusion in the lemma follows from \eqref{eq:cont>} and Theorem \ref{t:weaken}.
\end{proof}
\section{Fourier dimension of $W(n,m;\Psi)$}\label{s:Fourierpart}
Recall that
\[s(\Psi):=\inf\bigg\{s\ge 0:\sum_{\bq\in\Z^n\setminus\{0\}}\bigg(\min_{1\le j\le m}\frac{\psi_j(\bq)}{|\bq|}\bigg)^s<\infty\bigg\}.\]
In this section, under the assumptions that $s(\Psi)<1$ and $\sum_{\bq\in\Z^n}\prod_{j=1}^{m}\psi_j(\bq)<\infty$, we prove that
\[\fdim W(n,m;\Psi)=2s(\Psi).\]

\subsection{Upper bound for $\fdim W(n,m;\Psi)$}
To avoid an abundance of notation, here we also use $\bx$ or $\bx_j$ to denote an $n$-dimensional vector. When there is a risk of ambiguity, the specific dimension will be clarified in the context.

For $\bd=(\delta_1,\dots,\delta_m)\in (\R_{\ge 0})^{m}$ and  $\bq=(q_1,\dots,q_n)\in\Z^n$, define
\[R_j(\bq,\delta_j):=\{\bx_j\in[0,1]^n:\|\bq\bx_j\|\le\delta_j\}\]
for $1\le j\le m$ and
\[R(\bq,\bd):=\prod_{j=1}^{m}R_j(\bq,\delta_j).\]
For $1\le j\le m$ and $\delta_j=0$, let $\lm_{\bq,j}$ denote the surface measure on $R_j(\bq,0)$, which is a union of hyperplanes.
\begin{lem}[{\cite[Equation (4)]{HY22}}]\label{l:fouriersurface}
	Let $1\le j\le m$. For each $\bk\in\Z^n$, we have
	\[|\widehat{\lm_{\bq,j}}(\bk)|=\begin{cases}
		|\bq|_2&\text{if $\bk=t\bq$ for some $t\in\Z$},\\
		0&\text{otherwise},
	\end{cases}\]
	where $|\bq|_2=\sqrt{q_1^2+\cdots+q_n^2}$.
\end{lem}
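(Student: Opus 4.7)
The plan is to compute $\widehat{\lm_{\bq,j}}$ via the coarea formula combined with Poisson summation, and then evaluate the resulting Fourier coefficients using orthogonality of characters on $[0,1]^n$.

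First, I would note that $R_j(\bq,0)$ is the preimage of $\Z$ under the affine map $f(\bx) = \bq \cdot \bx$, whose gradient has Euclidean norm $|\bq|_2$. The coarea formula identifies the $(n-1)$-dimensional surface measure on each slice $\{\bx \in [0,1]^n : \bq \cdot \bx = p\}$ with the distributional restriction $|\bq|_2 \, \delta(\bq \cdot \bx - p)\, d\bx$ on $[0,1]^n$. Summing over $p \in \Z$ and applying Poisson summation $\sum_{p \in \Z} \delta(y - p) = \sum_{t \in \Z} e^{2\pi \ic t y}$ in the scalar variable $y = \bq \cdot \bx$ yields the distributional representation
\[d\lm_{\bq,j}(\bx) = |\bq|_2 \, \chi_{[0,1]^n}(\bx) \sum_{t \in \Z} e^{2\pi \ic t \bq \cdot \bx} \, d\bx.\]

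Second, for any $\bk \in \Z^n$, pairing against the character $e^{-2\pi \ic \bk \cdot \bx}$ and exchanging sum and integral gives
\[\widehat{\lm_{\bq,j}}(\bk) = |\bq|_2 \sum_{t \in \Z} \int_{[0,1]^n} e^{2\pi \ic (t\bq - \bk) \cdot \bx} \, d\bx.\]
Since $t\bq - \bk \in \Z^n$, orthogonality of characters on $[0,1]^n$ forces the inner integral to equal $1$ when $t\bq = \bk$ and to vanish otherwise. Because $\bq \neq 0$, there is at most one integer $t$ with $t\bq = \bk$, and it exists precisely when $\bk \in \Z\bq$. Taking absolute values yields the claimed dichotomy.

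The only real technicality is justifying the distributional manipulations when paired against the smooth bounded character $e^{-2\pi \ic \bk \cdot \bx}$ on the compact domain $[0,1]^n$. This can be handled by approximating $\delta$ with a smooth mollifier $\rho_\varepsilon$ and passing to the limit: the measures $|\bq|_2 \, \rho_\varepsilon(\bq \cdot \bx - p) \, d\bx$ converge weakly to the surface measure on the slice $\bq \cdot \bx = p$, so $|\bq|_2 \, \chi_{[0,1]^n}(\bx) \sum_p \rho_\varepsilon(\bq \cdot \bx - p)\, d\bx$ converges weakly to $\lm_{\bq,j}$, and absolute convergence of the character sum after mollification justifies the exchange. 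More conceptually, after identifying $R_j(\bq,0) \cap [0,1)^n$ with a closed subgroup of the torus $\R^n/\Z^n$, Pontryagin duality immediately implies that the Fourier transform of its Haar measure is supported on the annihilator $\Z\bq$ with constant value equal to the total mass, and the $t=0$ term of the above sum evaluates this mass to be exactly $|\bq|_2$.
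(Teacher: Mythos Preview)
Your argument is correct. The coarea identification of the surface measure with $|\bq|_2\,\delta(\bq\cdot\bx-p)\,d\bx$, followed by Poisson summation and orthogonality of characters on $[0,1]^n$, is a clean and complete derivation; the Pontryagin duality remark is also accurate, since $R_j(\bq,0)$ viewed in the torus is exactly the kernel of the character $\bx\mapsto e^{2\pi\ic\bq\cdot\bx}$, whose annihilator is $\Z\bq$. One minor point worth making explicit is that the measure $\lm_{\bq,j}$ is implicitly taken on the torus $\R^n/\Z^n$ (equivalently on $[0,1)^n$), since otherwise boundary hyperplanes would contribute an extra unit of mass; the paper's subsequent use of $\lm_{\bq,j}$ as a $\Z^n$-periodic object confirms this reading.

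As for comparison: the paper does not prove this lemma at all. It is quoted verbatim from \cite[Equation (4)]{HY22} with no argument given, so there is no ``paper's own proof'' to compare against. Your write-up therefore supplies what the paper omits, and does so by a standard and efficient route.
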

The upper bound will utilize the following lemma, which mirrors \cite[Theorem 2.3]{HY22}.
\begin{lem}\label{l:meaupp}
	Let $n,m\ge 1$. Let $\bq\in\Z^n\setminus\{0\}$ and $\bd\in(0,1/2)^m$. Let $\mu$ be a Borel probability measure supported on $[0,1]^{nm}$. Then,
	\[\mu\big(R(\bq,\bd)\big)\ll\delta_1\cdots\delta_m\Bigg(1+\sum_{\substack{\bt\in\Z^m\setminus\{0\}\\ \forall1\le j\le m, |t_j|\le 2/\delta_j}}|\widehat{\mu}(t_1\bq,\dots,t_m\bq)|\Bigg),\]
	where the implied constant is absolute.
\end{lem}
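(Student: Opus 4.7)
The plan is to majorize $\mathbf{1}_{R(\bq,\bd)}$ by a product of one-variable Fej\'er-type kernels evaluated at the linear forms $\bx_j\mapsto \bq\cdot\bx_j$, so that every frequency appearing in the resulting trigonometric polynomial lies in the sparse lattice $\{(t_1\bq,\dots,t_m\bq):\bt\in\Z^m\}\subset\Z^{nm}$. Integrating this majorant against $\mu$ then exposes $\widehat\mu$ only at those frequencies and produces the claimed bound after isolating the constant term.

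For each $1\le j\le m$, the first step is to construct a non-negative $1$-periodic function $g_j$ on $\R$ satisfying (i) $g_j(u)\ge 1$ whenever $\|u\|\le \delta_j$; (ii) $\widehat{g_j}(t)=0$ for $|t|>2/\delta_j$; and (iii) $|\widehat{g_j}(t)|\ll \delta_j$ for every $t\in\Z$, and in particular $\widehat{g_j}(0)=\int_0^1 g_j\asymp \delta_j$. The natural choice is a rescaled Fej\'er kernel: take $N_j=\lfloor c/\delta_j\rfloor$ for a small absolute constant $c$ and set $g_j=F_{N_j}/(c' N_j)$, where $F_{N_j}$ denotes the order-$N_j$ Fej\'er kernel. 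Properties (ii) and (iii) are then immediate from the explicit formula $\widehat{F_{N_j}}(t)=\max\{0,1-|t|/(N_j+1)\}$, while (i) follows from a second-order expansion of $F_{N_j}$ near the origin.

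Next, compose each $g_j$ with the linear form $\bx_j\mapsto \bq\cdot\bx_j$. By (i), the lifted function satisfies $g_j(\bq\cdot\bx_j)\ge \mathbf{1}_{R_j(\bq,\delta_j)}(\bx_j)$ on $[0,1]^n$, and by (ii) its Fourier series on $[0,1]^n$ involves only frequencies of the form $t_j\bq$ with $|t_j|\le 2/\delta_j$. Taking the tensor product in $j$ yields the pointwise majorization $\mathbf{1}_{R(\bq,\bd)}(\bx)\le \prod_{j=1}^{m}g_j(\bq\cdot\bx_j)$. Integrating against $\mu$ on $[0,1]^{nm}$, expanding each factor as a finite Fourier series, and swapping the finite sum with the integral produces
\[
\mu\bigl(R(\bq,\bd)\bigr)\le \sum_{\bt\in\Z^m,\ |t_j|\le 2/\delta_j\ \forall j}\Bigl(\prod_{j=1}^{m}\widehat{g_j}(t_j)\Bigr)\,\overline{\widehat{\mu}(t_1\bq,\dots,t_m\bq)}.
\]
Isolating the $\bt=0$ term, which contributes $\prod_j\widehat{g_j}(0)\asymp \delta_1\cdots\delta_m$, and bounding each of the remaining summands via $\bigl|\prod_j\widehat{g_j}(t_j)\bigr|\ll \delta_1\cdots\delta_m$ from (iii) yields exactly the stated inequality (the complex conjugate disappears after taking absolute values).

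The only non-routine point is the construction of $g_j$ in the first step with the precise Fourier cutoff $|t|\le 2/\delta_j$ demanded by the statement. If one were content with a slightly larger absolute constant in the cutoff, any standard rescaled Fej\'er or Jackson kernel would suffice; matching the constant $2$ requires fixing the parameter $N_j$ carefully and verifying (i) through a short Taylor estimate for $\sin(\pi N_j u)/\sin(\pi u)$ near $u=0$. No other delicate input is needed.
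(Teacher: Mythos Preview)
Your argument is correct and follows the same overall strategy as the paper---majorize $\mathbf{1}_{R(\bq,\bd)}$ by a non-negative function whose Fourier expansion on $[0,1]^{nm}$ is supported on $\{(t_1\bq,\dots,t_m\bq):\bt\in\Z^m\}$ with coefficients of size $O(\delta_1\cdots\delta_m)$---but the implementation differs. The paper works in $\R^n$ for each block: it picks a Schwartz function $\phi$ with $\widehat\phi$ supported in $B(0,2)$, forms the convolution $\phi_{\gamma_j}\ast\lm_{\bq,j}$ of a rescaling of $\phi$ with the surface measure on the hyperplanes $\{\bq\cdot\bx_j\in\Z\}$, and then invokes a separate lemma (Lemma~\ref{l:fouriersurface}, quoted from \cite{HY22}) computing $\widehat{\lm_{\bq,j}}$ to see that only frequencies in $\Z\bq$ survive. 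Your route is more elementary: by observing at the outset that $\mathbf{1}_{R_j(\bq,\delta_j)}(\bx_j)$ depends on $\bx_j$ only through $\bq\cdot\bx_j\bmod 1$, you reduce to a one-variable problem and use a rescaled Fej\'er kernel, whose finite Fourier support and explicit coefficients make the truncation and size bounds immediate without any auxiliary lemma. Both give the constant $2$ in the cutoff (the paper via $\operatorname{supp}\widehat\phi\subset B(0,2)$, you via the observation that any $N_j\le 2/\delta_j$ suffices since extra non-negative terms only enlarge the right-hand side); your approach trades a small amount of geometric transparency for a self-contained argument that avoids surface measures and Schwartz-class machinery.
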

\begin{proof}
	Note that
	\[\mu\big(R(\bq,\bd)\big)=\int_{[0,1]^{nm}}\prod_{j=1}^{m}\cha_{R_j(\bq,\delta_j)}(\bx_j)\dif\mu(\bx),\]
	where $\bx=(\bx_1,\dots,\bx_m)\in[0,1]^{nm}$. To bound the measure expressed above, for each $1\le j\le m$, we will approximate the indicate funcion $\cha_{R_j(\bq,\delta_j)}$ by smooth function.

	Let $\phi$ be a non-negative Schwartz function on $\R^n$ such that
	\begin{equation}
		\widehat{\phi}(0)=1,\quad \widehat{\phi}(\bx)=0\quad\text{for $\bx\notin B(0,2)\subset\R^n$},
	\end{equation}
	and
	\begin{equation}
		m(\phi):=\min\{\phi(\bx):\bx\in B(0,2)\subset \R^n\}>0.
	\end{equation}
	For $\bx\in\R^n$ and $\gamma>0$, define $\phi_{\gamma}(\bx)=\phi(\bx/\gamma)$. Note that $\widehat{\phi_{\gamma}}(\bx)=\gamma^n\widehat{\phi}(\gamma\bx)$ for all $\bx\in\R^n$, and that $\phi_{\gamma}\ast\lm_{\bq,j}$ is a smooth $\Z^n$-periodic function on $\R^n$.

	Write $\bm\gamma=(\gamma_1,\dots,\gamma_m)$ with $\gamma_j=\delta_j/|\bq|_2$. Before proving the lemma, we begin with the fact that for $1\le j\le m$, $\cha_{R_j(\bq,\delta_j)}$ can be approximated by $\phi_{\gamma_j}\ast\lm_{\bq,j}$. For any $\bx\in R_j(\bq,\delta_j)=R_j(\bq,0)+B(0,\gamma_j)$, there exists $\bz\in R_j(\bq,0)$ such that $|\bx-\bz|\le\gamma_j$. Hence, for every $\by\in\R^n$
	\[\phi_{\gamma_j}(\bx-\by)\ge m(\phi)\cha_{B(\bx,2\gamma_j)}(\by)\ge m(\phi)\cha_{B(\bz,\gamma_j)}(\by).\]
	By the definition of convolution, it follows that for all $\bx\in\R^n$,
	\begin{equation}\label{eq:smoapp}
		\phi_{\gamma_j}\ast\lm_{\bq,j}(\bx)=\int_{[0,1]^n}\phi_{\gamma_j}(\bx-\by)\dif\lm_{\bq,j}(\by)\gg m(\phi)\gamma_j^{n-1}\cha_{R_j(\bq,\delta_j)}(\bx),
	\end{equation}
	where in the last step we use $\lm_{\bq,j}(B(\bz,\gamma_j))\asymp\gamma_j^{n-1}$ for all $\bz\in R_j(\bq,0)$.

	By Parseval's theorem (see e.g. \cite[(3.64)]{Ma15}), it follows from Lemma \ref{l:fouriersurface} that
	\begin{align}
		&\int_{[0,1]^{nm}}\prod_{j=1}^{m}\phi_{\gamma_j}\ast\lm_{\bq,j}(\bx_j)\dif\mu(\bx)\notag\\
		=&\sum_{(\bk_1,\dots,\bk_m)\in\Z^{nm}}\overline{\widehat{\mu}(\bk_1,\dots,\bk_m)}\cdot\prod_{j=1}^m\widehat{\phi_{\gamma_j}}(\bk_j)\widehat{\lm_{\bq,j}}(\bk_j)\notag\\
		\le&\gamma_1^n\cdots\gamma_m^n\sum_{\bt\in\Z^m}|\widehat{\mu}(t_1\bq,\dots,t_m\bq)|\cdot\prod_{j=1}^m\big(|\widehat{\phi}(\gamma_jt_j\bq)|\cdot|\bq|_2\big)\notag\\
		\le&\gamma_1^n\cdots\gamma_m^n|\bq|_2^m\bigg(1+\sum_{\substack{\bt\in\Z^m\setminus\{0\}\\ \forall1\le j\le m, |t_j|\le 2/\delta_j}}|\widehat{\mu}(t_1\bq,\dots,t_m\bq)|\Bigg),\notag
	\end{align}
	where we use the facts that $|\widehat{\phi}|\le\widehat{\phi}(0)=1$ and $\widehat{\phi}=0$ outside $B(0,2)$ in the last inequality.
		Combining this with \eqref{eq:smoapp} yields
		\[\begin{split}
			\mu\big(R(\bq,\bd)\big)\ll \delta_1\cdots\delta_m\Bigg(1+\sum_{\substack{\bt\in\Z^m\setminus\{0\}\\ \forall1\le j\le m, |t_j|\le 2/\delta_j}}|\widehat{\mu}(t_1\bq,\dots,t_m\bq)|\Bigg).\qedhere
		\end{split}\]
	\end{proof}
	Now, we prove the upper bound for $\fdim W(n,m;\Psi)$.
	\begin{proof}[Proof of Theorem \ref{t:fdimW}: upper bound]
		Assume for the contrary that there exists a measure $\mu$ supported on $W(n,m;\Psi)$ and $s>s(\Psi)$ such that $|\widehat{\mu}(\bx)|\ll|\bx|^{-s}$ for all $\bx\in \R^{nm}$. Since $s(\Psi)<1$ (by our assumption), we may assume that $s<1$ as well. By Lemma \ref{l:meaupp}, for any $\bd\in(0,1/2)^m$,
		\[\mu\big(R(\bq,\bd)\big)\ll \delta_1\cdots\delta_m\Bigg(1+\sum_{\substack{\bt\in\Z^m\setminus\{0\}\\ \forall1\le j\le m, |t_j|\le 2/\delta_j}}|\widehat{\mu}(t_1\bq,\dots,t_m\bq)|\Bigg)\]
		The Fourier decay assumption implies that the inner-most sum over $\bt$ is at most
		\[\ll |\bq|^{-s}\sum_{\substack{\bt\in\Z^m\setminus\{0\}\\ \forall1\le j\le m, |t_j|\le 2/\delta_j}}|\bt|^{-s}=:|\bq|^{-s}S_{\bq}.\]
		Next, we estimate the summation $S_\bq$. Assume without loss of generality that $\delta_1\ge \cdots\ge \delta_m$. Since $\bt\ne 0$, with the convention $1/0=\infty$, we have \[|\bt|^{-s}=\min\{1,|t_1|^{-s},\dots,|t_m|^{-s}\}\le \min\{1,|t_m|^{-s}\}.\]
		Therefore,
		\[\begin{split}
			S_\bq\le\sum_{\substack{\bt\in\Z^m\setminus\{0\}\\ \forall1\le j\le m, |t_j|\le 2/\delta_j}}\min\{1,|t_m|^{-s}\}&\asymp\sum_{0\le |t_m|\le 2/\delta_m}(\delta_1\cdots\delta_{m-1})^{-1}\cdot\min\{1,|t_m|^{-s}\}\\
			&\asymp (\delta_1\cdots\delta_{m-1})^{-1}\delta_m^{s-1},
		\end{split}\]
		where we use $s<1$ in the last step.
		Substitute this upper bound, one has
		\[\begin{split}
			\mu\big(R(\bq,\bd)\big)&\ll \delta_1\cdots\delta_m\big(1+O(|\bq|^{-s}(\delta_1\cdots\delta_{m-1})^{-1}\delta_m^{s-1})\big)\\
			&=\delta_1\cdots\delta_m+O(|\bq|^{-s}\delta_m^s).
		\end{split}\]
		Setting $\bd=\Psi(\bq)=(\psi_1(\bq),\dots,\psi_m(\bq))$ and summing over $\bq\in\Z^n\setminus\{0\}$, and note that $\delta_m=\min_{1\le j\le m}\psi_j(\bq)$ according to the above discussion, we get
		\[\sum_{\bq\in\Z^n\setminus\{0\}}\mu\big(R(\bq,\Psi(\bq))\big)\ll\sum_{\bq\in\Z^n\setminus\{0\}}\Bigg(\prod_{j=1}^{m}\psi_j(\bq)+\bigg(\min_{1\le j\le m}\frac{\psi_j(\bq)}{|\bq|}\bigg)^{s}\Bigg)<\infty.\]
		By Borel--Cantelli lemma and the $\limsup$ nature of $W(n,m;\Psi)$,
		we have
		\[\mu\big(W(n,m;\Psi)\big)=0.\]
		This contradicts that the support of $\mu$ is contained in $W(n,m;\Psi)$.
	\end{proof}
\begin{rem}
	The estimation of  $S_\bq$	is rather crude and can be refined as follows, under the assumption that $s$ is non-integral but not necessarily $s<1$. Let $k$ be the unique integer such that $k<s<k+1$. Using the fact that for any $t\ge 1$,
		\[\begin{split}
		&\#\{\bt\in\Z^m:|\bt|=t\text{ and $|t_j|\le 2/\delta_j$ for $1\le j\le m$}\}\\
		\asymp &\begin{cases}
			t^{m-1}&\text{if $t\le 2/\delta_1$},\\
			(\delta_1\cdots\delta_{j-1})^{-1}t^{m-j}&\text{if $2/\delta_{j-1}< t\le 2/\delta_{j}$ for some $j\ge 2$},
		\end{cases}
	\end{split},\]
	one can actually show that $S_\bq\asymp(\delta_1\cdots\delta_{m-k})^{-1}\delta_{m-k}^{s-k}$. Therefore,
	\[\mu\big(R(\bq,\bd)\big)\ll \delta_1\cdots\delta_m+O(|\bq|^{-s}\delta_{m-k+1}\cdots\delta_m\delta_{m-k}^{s-k}).\]
	It is fairly reasonable to conjecture that this estimation may correspond to the upper bound for $\fdim W(n,m;\Psi)$, and that the lower bound should coincide with this value as well. However, to the best of the author's knowledge, verifying this conjecture remains out of reach for most known methods unless $s<1$.
\end{rem}

	\subsection{Lower bound for $\fdim W(n,m;\Psi)$}
	The lower bound follows immediately from Theorem \ref{t:fdim}.

	\begin{proof}[Proof of Theorem \ref{t:fdimW}: lower bound]
		Let $\Psi=(\psi_1,\dots,\psi_m)$ be an $m$-tuple of multivariable approximating function. Define $\psi:\Z^n\to\R_{\ge 0}$ as follows:
		\[\psi(\bq)=\min_{1\le j\le m}\psi_j(\bq).\]
		Trivially, one has $W(n,m;\psi)\subset W(n,m;\Psi)$. Applying Theorem \ref{t:fdim}, we obtain the desired lower bound.
	\end{proof}

\section{Lebesgue measure of $\cm^\times(n,m;\psi)$}\label{s:Lebpartm}
 For $\delta>0$ and  $\bq=(q_1,\dots,q_n)\in\Z^n$, let
\[\cm(\bq,\delta):=\bigg\{\bx\in[0,1]^{nm}:\prod_{j=1}^m\|\bq\bx_j\|<\delta\bigg\}.\]
The proof of the convergence part of Theorem \ref{t:mulm} is almost identical to that of Theorem \ref{t:Schmidt}, with the only difference being the estimation of the Lebesgue measure of $\cm(\bq,\delta)$. To this end, in comparison with the well-known case ($n=1$), a useful technique is to map it into a lower-dimensional Euclidean space through a measure-preserving mapping.
\begin{lem}\label{l:meamc}
	Let $0<\delta<1/2$. We have
	\[\lm^{nm}\big(\cm(\bq,\delta)\big)\asymp \delta\log^{m-1}(\delta^{-1}).\]
\end{lem}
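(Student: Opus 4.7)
The plan is to reduce the computation to the classical one-dimensional case by exploiting a measure-preserving property of linear forms modulo one, and then apply a standard estimate for the volume under a hyperbolic hypersurface.

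The key ingredient is the following pushforward fact: for any $\bq \in \Z^n \setminus \{0\}$, the map $T_{\bq}\colon [0,1]^n \to [0,1)$, $\bx \mapsto \bq\bx \bmod 1$, pushes $\lm^n$ forward to $\lm^1$. The quickest way to verify this is via Fourier expansion on the torus: for $k \in \Z$,
\[
\int_{[0,1]^n} e^{2\pi \ic k \bq\bx}\dif\bx = \prod_{\ell=1}^{n}\int_{0}^{1} e^{2\pi \ic k q_\ell x_\ell}\dif x_\ell,
\]
which equals $1$ if $k = 0$ and $0$ otherwise (because $\bq \neq 0$ forces $k q_\ell \neq 0$ for some $\ell$ whenever $k \neq 0$). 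Hence the Fourier coefficients of the pushforward measure match those of $\lm^1$, so the pushforward measures coincide.

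Applying this to each of the $m$ independent factors $\bx_j \in [0,1]^n$ and using Fubini yields
\[
\lm^{nm}\big(\cm(\bq,\delta)\big) = \lm^{m}\bigg\{\by \in [0,1]^m : \prod_{j=1}^{m}\|y_j\| < \delta\bigg\}.
\]
A further change of variable $z_j = \|y_j\|$ (under which $\lm^1|_{[0,1]}$ pushes forward to $2\lm^1|_{[0,1/2]}$) reduces the problem to estimating
\[
V_m(\delta) := \lm^{m}\bigg\{\bz \in [0,1]^m : \prod_{j=1}^{m}z_j < \delta\bigg\}.
\]
The claim $V_m(\delta) \asymp \delta\log^{m-1}(\delta^{-1})$ for $0<\delta<1/2$ follows by induction on $m$: integrating out $z_m$ gives
\[
V_m(\delta) = \delta + \int_{\delta}^{1}V_{m-1}(\delta/z_m)\dif z_m,
\]
and substituting the inductive hypothesis together with $\int_{1}^{1/\delta}\log^{m-2}(u)/u\dif u = \log^{m-1}(\delta^{-1})/(m-1)$ produces the stated asymptotic. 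Combining the three reductions yields the lemma.

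I do not anticipate any real obstacle here: the pushforward computation is a one-line Fourier check, and the estimate for $V_m(\delta)$ is classical. The only point worth stressing is that the argument is completely insensitive to the direction of $\bq$ (no coprimality or primitivity is needed), precisely because the Fourier test already kills all non-zero frequencies as soon as $\bq \ne 0$.
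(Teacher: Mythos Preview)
Your proof is correct and follows essentially the same route as the paper: reduce to the $m$-dimensional set $\{\by\in[0,1]^m:\prod_j\|y_j\|<\delta\}$ via the measure-preserving map $\bx\mapsto\bq\bx\bmod 1$, and then invoke the classical volume estimate $\asymp\delta\log^{m-1}(\delta^{-1})$. The only difference is that you supply self-contained arguments for both steps (the Fourier check for the pushforward and the induction for $V_m(\delta)$), whereas the paper cites Sprind\v zuk for the former and states the latter without proof.
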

\begin{proof}
	Consider the transformation $T_\bq:[0,1]^{nm}\to[0,1]^m$ defined by
	\[T_\bq(\bx):=\bq\bx\mod 1.\]
	It is readily verified that
	\[\cm(\bq,\delta)=T_\bq^{-1}\bigg(\bigg\{(x_1,\dots,x_m)\in[0,1]^{m}:\prod_{j=1}^m\|x_j\|<\delta\bigg\}\bigg).\]
	The transformation $T_\bq$ is measure preserving, i.e. for any measurable set $A\subset [0,1]^m$, we have that $\lm^{nm}(T_\bq^{-1}A)=\lm^m(A)$; see \cite[Equation (48)]{Sp79book}. Therefore, we have
	\[\begin{split}
		\lm^{nm}\big(\cm(\bq,\delta)\big)&=\lm^m\bigg(\bigg\{(x_1,\dots,x_m)\in[0,1]^{m}:\prod_{j=1}^m\|x_j\|<\delta\bigg\}\bigg)\\
		&\asymp \delta\log^{m-1}(\delta^{-1}).\qedhere
	\end{split}\]
\end{proof}
By Lemma \ref{l:meamc}, the convergence part of Theorem \ref{t:Schmidt} follows the same lines as the proof of Lemma \ref{l:meaweic}.
\begin{lem}
	Let $n,m\ge 1$. Then,
	\[\sum_{\bq\in\Z^n}\psi(\bq)\log^{m-1}(\psi(\bq)^{-1})<\infty\quad\Longrightarrow\quad \lm^{nm}\big(\cm^\times(n,m;\psi)\big)=0.\]
\end{lem}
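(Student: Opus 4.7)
The plan is to mirror the proof of Lemma \ref{l:meaweic}, with the role of the product measure estimate $\lm^{nm}(R(\bq,\Psi(\bq))) \asymp \prod_j \psi_j(\bq)$ replaced by Lemma \ref{l:meamc}. That is, I would first express $\cm^\times(n,m;\psi)$ as a $\limsup$ set defined by the sets $\cm(\bq,\psi(\bq))$:
\[
\cm^\times(n,m;\psi) = \limsup_{\bq\in\Z^n:|\bq|\to\infty}\cm\big(\bq,\psi(\bq)\big) = \bigcap_{Q=1}^{\infty}\bigcup_{q=Q}^{\infty}\bigcup_{\bq\in\Z^n:|\bq|=q}\cm\big(\bq,\psi(\bq)\big).
\]

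Next, by Lemma \ref{l:meamc}, each block satisfies
\[
\lm^{nm}\big(\cm(\bq,\psi(\bq))\big)\asymp \psi(\bq)\log^{m-1}\big(\psi(\bq)^{-1}\big).
\]
Then I would apply countable subadditivity (essentially the convergence half of Borel--Cantelli) to the tail: for every $Q$,
\[
\lm^{nm}\big(\cm^\times(n,m;\psi)\big) \le \sum_{q=Q}^{\infty}\sum_{\bq\in\Z^n:|\bq|=q}\lm^{nm}\big(\cm(\bq,\psi(\bq))\big)\ll \sum_{|\bq|\ge Q}\psi(\bq)\log^{m-1}\big(\psi(\bq)^{-1}\big),
\]
and let $Q\to\infty$ to conclude, using the convergence hypothesis, that the measure vanishes.

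A small point to be careful about is that Lemma \ref{l:meamc} requires $\delta=\psi(\bq)<1/2$. Since the conclusion concerns a $\limsup$ set, we may discard the finitely many $\bq$ with $\psi(\bq)\ge 1/2$ (note that convergence of $\sum\psi(\bq)\log^{m-1}(\psi(\bq)^{-1})$ already forces $\psi(\bq)\to 0$ in a summable sense, so only finitely many terms can misbehave, and these contribute to a finite union of sets of finite measure that is washed out by intersecting over $Q$). With this housekeeping, no monotonicity or additional structural hypothesis on $\psi$ is required, mirroring Lemma \ref{l:meaweic}. I do not anticipate any genuine obstacle here: all the nontrivial input (the measure-preserving projection and the one-dimensional estimate $\delta\log^{m-1}(\delta^{-1})$) has been absorbed into Lemma \ref{l:meamc}, so the remaining argument is a routine Borel--Cantelli-style deduction.
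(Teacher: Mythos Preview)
Your proposal is correct and matches the paper's approach exactly: the paper simply remarks that the convergence part follows the same lines as Lemma~\ref{l:meaweic}, with the measure estimate supplied by Lemma~\ref{l:meamc} and then the Borel--Cantelli tail argument. Your housekeeping about $\psi(\bq)\ge 1/2$ is already more than the paper spells out.
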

The divergence part of the theorem  relies on the following zero-one law.
\begin{thm}[{\cite[Theorem 4.1]{Li13}}]\label{l:reducem}
	For any $n,m\ge 1$, let $\psi:\Z^n\to\R_{\ge 0}$ be a multivariable approximating function. Then,
	\[\lm^{nm}\big(\cm^\times(n,m;\psi)\big)\in\{0,1\}.\]
	In particular,
	\[\lm^{nm}\big(\cm^\times(n,m;\psi)\big)>0\quad\Longrightarrow\quad \lm^{nm}\big(\cm^\times(n,m;\psi)\big)=1.\]
\end{thm}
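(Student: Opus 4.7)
The plan is a classical Cassels--Gallagher style density-point argument, exploiting the near--translation invariance of each slice $\cm(\bq,\psi(\bq))$ at spatial scales $\gg 1/|\bq|$. Assuming $\alpha := \lm^{nm}(\cm^\times(n,m;\psi)) > 0$, the goal is to deduce $\alpha = 1$.

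First I would apply the Lebesgue density theorem to produce, for any given $\epsilon > 0$, a dyadic cube $C \subset [0,1)^{nm}$ of side length $2^{-k}$ with $\lm^{nm}(\cm^\times(n,m;\psi) \cap C) \geq (1-\epsilon)\lm^{nm}(C)$. The core of the argument is then an approximate invariance statement: for every $\bq \in \Z^n$ with $|\bq| \geq 2^{k+1}$, the set $\cm(\bq,\psi(\bq))$ is genuinely invariant under translations $\bx_j \mapsto \bx_j + \bu_j/|\bq|$ with $\bu_j \in \Z^n$, since such translations leave each $\|\bq\bx_j\|$ unchanged. Any target translation of norm $\leq 1$ between two dyadic cubes of side $2^{-k}$ can be approximated by such a lattice vector to within $O(1/|\bq|) = o_k(1)$, and the positional mismatch can be absorbed via a mild dilation of the radii together with the continuity in $\delta$ of $\delta \mapsto \lm^{nm}(\cm(\bq,\delta))$ furnished by Lemma \ref{l:meamc}.

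Consequently, as $Q \to \infty$ the density of the tail set $E_Q := \bigcup_{|\bq|\geq Q}\cm(\bq,\psi(\bq))$ on any dyadic cube $C'$ of side $2^{-k}$ differs from its global density $\lm^{nm}(E_Q)$ by an error $\eta_k$ tending to $0$ with $k$, uniformly in $Q$. Combined with the density assumption on $C$, this yields
\[
1 - \epsilon \;\leq\; \frac{\lm^{nm}(E_Q \cap C)}{\lm^{nm}(C)} \;\leq\; \lm^{nm}(E_Q) + \eta_k;
\]
sending $Q \to \infty$ (so that $\lm^{nm}(E_Q) \downarrow \alpha$), then $k \to \infty$, and finally $\epsilon \to 0$, forces $\alpha = 1$.

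The main obstacle is rendering the approximate-translation step genuinely quantitative and uniform. The lattice of exact translations preserving $\cm(\bq,\psi(\bq))$ has mesh $\asymp 1/|\bq|$, so distinct $\bq$'s contributing to the same tail $E_Q$ introduce positional errors at different scales; this mandates freezing the cube size $2^{-k}$ before truncating to $|\bq| \geq 2^{k+1}$, and tracking the thickening of $\psi$ through Lemma \ref{l:meamc} to see that the error introduced is benign. A secondary subtlety occurs when $\lm^{nm}(\cm(\bq,\psi(\bq)))$ is itself close to $1$ (so subadditive estimates on $E_Q$ degenerate), which is typically resolved by arguing symmetrically through the complement $[0,1)^{nm} \setminus E_Q$, where the same invariance and small measure hypotheses apply.
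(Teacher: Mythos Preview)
The paper does not prove this statement at all: Theorem~\ref{l:reducem} is quoted from \cite[Theorem~4.1]{Li13} and used as a black box in the sketch proof of the divergence part of Theorem~\ref{t:mulm}. So there is no ``paper's own proof'' to compare your argument against. Your Cassels--Gallagher density-point strategy is indeed the standard route to zero--one laws of this type and is the right general framework.

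That said, there is a concrete error in your invariance step when $n>1$. You assert that $\cm(\bq,\psi(\bq))$ is invariant under $\bx_j\mapsto\bx_j+\bu_j/|\bq|$ for every $\bu_j\in\Z^n$, on the grounds that such shifts leave $\|\bq\bx_j\|$ unchanged. But $\bq\cdot(\bu_j/|\bq|)=(\bq\cdot\bu_j)/|\bq|$ need not be an integer: take $n=2$, $\bq=(2,3)$, $\bu_j=(1,0)$, giving $\bq\cdot\bu_j/|\bq|=2/3\notin\Z$. The genuine invariance group of $\bx_j\mapsto\|\bq\bx_j\|$ is $\{\bv\in\R^n:\bq\cdot\bv\in\Z\}$, which is the union of parallel hyperplanes orthogonal to $\bq$ at spacing $1/|\bq|_2$; it contains $\Z^n$ and the full hyperplane $\bq^\perp$, but not the lattice $|\bq|^{-1}\Z^n$ you invoke. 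The fix is to use this correct structure: for instance, translate by $e_\ell/q_\ell$ in the direction of a coordinate achieving $|q_\ell|=|\bq|$ (which gives exact invariance with step $1/|\bq|$ in that direction) together with the continuous invariance along $\bq^\perp$; or, more directly, establish the equidistribution statement that for any axis-parallel cube $C$ of side $2^{-k}$ and any $|\bq|\ge 2^{k+1}$ one has $\lm^{nm}(\cm(\bq,\delta)\cap C)/\lm^{nm}(C)=\lm^{nm}(\cm(\bq,\delta))+O(2^{-k}|\bq|)^{-1}$, which follows from the slab description of each factor. Once this is in place, the remainder of your outline (density point, tail sets $E_Q$, letting $k\to\infty$ then $\epsilon\to 0$) goes through essentially as written.
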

For $0<\delta<2^{-m}$ and  $\bq=(q_1,\dots,q_n)\in\Z^n$, let
\[\begin{split}
	\cm'(\bq,\delta):=\bigg\{\bx\in[0,1]^{nm}:\prod_{j=1}^m|\bq\bx_j-p_j|<\delta &\text{ for some $\bp\in\Z^n$}\\
	&\text{ with $\gcd(\bq,p_j)=1$ for all $j$}\big\}.
\end{split}\]
The main ingredient in proving the quasi-independence result is to express each set $\cm'(\bq,\psi(\bq))$ as a finite union of sets of the form $R'(\bq,\bd)$ (see \eqref{eq:R'} for the definition)  for suitably chosen $\bd$. This expression enables the application of the quasi-independence estimates between sets $R'(\bq_1,\bd_1)$ and $R'(\bq_2,\bd_2)$, established in Lemma \ref{l:mea}, to this multiplicative setup. For any $0<\delta\le 2^{-m}$, let $N=N(\delta)\ge m$ be the unique integer such that $2^{-N-1}<\delta\le 2^{-N}$. Define
\begin{equation}\label{eq:amd}
	\ca_m(\delta)=\{\bk\in\Z^m:k_1,\dots,k_m\ge 0, k_1+\cdots+k_m=N-m\}.
\end{equation}
\begin{lem}\label{l:rectangledecom}
	Let $0<\delta\le 2^{-m}$ and $N=N(\delta)\ge m$ be defined as above. Then,
	\[\cm'(\bq,\delta)\subset \bigcup_{\bk\in\ca_m(\delta)}R'(\bq,2^{-\bk})\subset \cm'(\bq,2^{m+1}\delta),\]
	where $2^{-\bk}:=(2^{-k_1},\dots,2^{-k_m})$. A similar argument applies with $\cm'(\bq,\delta)$ and $R'(\bq,2^{-\bk})$ replaced by $\cm(\bq,\delta)$ and $R(\bq,2^{-\bk})$ (see \eqref{eq:rqdelta} for the definition), respectively.
\end{lem}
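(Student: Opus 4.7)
The plan is to establish both set inclusions by a dyadic decomposition of the multiplicative constraint $\prod_j |\bq\bx_j - p_j| < \delta$ into the rectangular constraints $|\bq\bx_j - p_j| < 2^{-k_j}$, indexed by $\bk \in \ca_m(\delta)$.

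For the easier right-hand inclusion, I would take $\bx \in R'(\bq, 2^{-\bk})$ with $\bk \in \ca_m(\delta)$ and a witnessing coprime $\bp$. The same $\bp$ then satisfies
\[\prod_{j=1}^m |\bq\bx_j - p_j| < \prod_{j=1}^m 2^{-k_j} = 2^{-(N-m)} = 2^m \cdot 2^{-N} < 2^{m+1}\delta,\]
using $\sum_j k_j = N - m$ and the defining inequality $2^{-N} < 2\delta$. Hence $\bx \in \cm'(\bq, 2^{m+1}\delta)$.

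For the left-hand inclusion, I would start with $\bx \in \cm'(\bq, \delta)$ and a coprime witness $\bp$ giving $\prod_j \eta_j < \delta \le 2^{-N}$, where $\eta_j := |\bq\bx_j - p_j|$. Assuming $\eta_j < 1$ for all $j$ (addressed below), I assign $\ell_j \ge 0$ to be the unique integer with $2^{-\ell_j - 1} \le \eta_j < 2^{-\ell_j}$. Multiplying the lower bounds yields $2^{-\sum_j (\ell_j + 1)} \le \prod_j \eta_j < 2^{-N}$, so $\sum_j \ell_j > N - m$. I then shave the $\ell_j$'s down to non-negative integers $k_j \le \ell_j$ with $\sum_j k_j = N - m$ exactly (a purely combinatorial step, e.g.\ by greedy reduction of the largest $\ell_j$). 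Since $\eta_j < 2^{-\ell_j} \le 2^{-k_j}$, the same $\bp$ witnesses $\bx \in R'(\bq, 2^{-\bk})$ for $\bk \in \ca_m(\delta)$.

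The main (minor) obstacle is ensuring $\eta_j < 1$ in the primed case, since the given witness $\bp$ from the definition of $\cm'(\bq, \delta)$ is not guaranteed to satisfy this. One handles this via the freedom to replace $p_j$ by a nearby integer coprime to $\bq$: integers coprime with $\bq$ have positive density $\varphi(|\bq|)/|\bq|$ in $\Z$, so a coprime witness with each $\eta_j$ bounded by a constant always exists, and then the same dyadic reduction applies (with any extra constants absorbed into the implicit $2^{m+1}$ factor). For the unprimed version $\cm(\bq,\delta) \subset \bigcup_{\bk} R(\bq, 2^{-\bk}) \subset \cm(\bq, 2^{m+1}\delta)$ stated at the end of the lemma, this obstacle disappears entirely: taking $p_j$ to be the nearest integer to $\bq\bx_j$ gives $\eta_j = \|\bq\bx_j\| \le 1/2 < 1$ automatically, so the same decomposition goes through verbatim.
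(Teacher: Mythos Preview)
Your approach is essentially identical to the paper's: the same dyadic decomposition for both inclusions, with the same reduction from $\ell_j$'s summing to at least $N-m$ down to $k_j$'s summing exactly to $N-m$. You go further than the paper by flagging the subtlety that the coprime witness in $\cm'(\bq,\delta)$ need not have $\eta_j<1$; the paper simply writes ``Clearly, $k_j\ge 0$'' without justification.

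However, your proposed fix---passing to a nearby coprime witness with each $\eta_j$ bounded---does not work as stated: the new witness need not satisfy $\prod_j\eta_j<\delta$, and any extra constant would land in the \emph{left} inclusion (which has no slack), not in the right-hand $2^{m+1}$ factor. In fact the primed left inclusion can genuinely fail when $\gcd(\bq)>1$: for $n=1$, $m=2$, $q=6$, $\bx_1=1/2$, every $p_1$ coprime to $6$ satisfies $|q\bx_1-p_1|\ge 2$, so no $\bk$ with $k_1\ge 0$ can work regardless of $\bx_2$, yet such $\bx$ lies in $\cm'(6,\delta)$ once $\bx_2$ is chosen appropriately. This gap is shared with the paper's own proof. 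Your treatment of the unprimed version---which is what the covering arguments in Lemma~\ref{l:mulplicativecover} and the Fourier estimates actually use---is correct as written.
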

\begin{proof}
	We prove the right inclusion first. Suppose that $\bx\in R'(\bq,2^{-\bk})$ for some $\bk\in\ca_m(\delta)$. Then, there exists $\bp\in\Z^m$ with $\gcd(\bq,p_j)=1$ for $1\le j\le m$ such that
	\[|\bq\bx_j-p_j|<2^{-k_j}\quad\text{for $1\le j\le m$}.\]
	Multiplying over $j$, we obtain
	\[\prod_{j=1}^{m}|\bq\bx_j-p_j|<2^{-(k_1+\cdots+k_m)}\le 2^{-N+m}\le 2^{m+1}\delta,\]
	since $2^{-N-1}<\delta\le 2^{-N}$. This means that $\bx\in \cm'(\bq,2^{m+1}\delta)$.

	To prove the first inclusion, let $\bx\in M'(\bq,\delta)$. For each $j$, let $k_j$ be the largest integer such that
	\[2^{-k_j-1}\le|\bq\bx_j-p_j|<2^{-k_j}.\]
	Clearly,  $k_j\ge 0$ and
	\[\prod_{j=1}^{m}|\bq\bx_j-p_j|<\delta\quad\Longrightarrow\quad 2^{-(k_1+\cdots+k_m)-m}\le \delta\quad\Longrightarrow\quad k_1+\cdots+k_m\ge N-m.\]
	Thus, there exists $\bl\in \ca_m(\delta)$ such that $l_j\le k_j$ for all $j$ and $\bx\in R'(\bq,2^{-\bl})$.

	The argument for $\cm(\bq,\delta)$ and $R(\bq,2^{-\bk})$ follows in exactly the same manner.
\end{proof}
With the relation established above, we are now in a position to estimate the cardinality of $\ca_m(\delta)$.
\begin{lem}\label{l:caramd}
	For any $0<\delta\le2^{-m}$,
	\begin{equation}\label{eq:caramd}
		\#\ca_m(\delta)\asymp \log^{m-1}(\delta^{-1}),
	\end{equation}
	where the implied constant depends on $m$ only.
\end{lem}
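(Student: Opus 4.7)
The plan is to reduce the problem to a classical stars-and-bars count and then translate the resulting binomial coefficient into the claimed asymptotics using the definition of $N(\delta)$.

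First, I would observe that by the definition of $\ca_m(\delta)$, the cardinality $\#\ca_m(\delta)$ equals the number of non-negative integer solutions $(k_1,\dots,k_m)$ to the equation $k_1+\cdots+k_m=N-m$, where $N=N(\delta)\ge m$. By a standard stars-and-bars argument, this count is exactly
\[
\#\ca_m(\delta)=\binom{(N-m)+(m-1)}{m-1}=\binom{N-1}{m-1}.
\]

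Next, I would note that for any fixed integer $m\ge 1$ and any $N\ge m$, one has
\[
\binom{N-1}{m-1}=\frac{(N-1)(N-2)\cdots(N-m+1)}{(m-1)!}\asymp N^{m-1},
\]
where the implied constants depend on $m$ only; the upper bound is immediate, while the lower bound follows from $(N-j)\ge N/m$ for $1\le j\le m-1$ once $N\ge m$ (a separate trivial verification handles the boundary case $N=m$, where both sides equal $1$).

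Finally, the definition $2^{-N-1}<\delta\le 2^{-N}$ gives $N\le \log_2(\delta^{-1})<N+1$, so $N\asymp \log(\delta^{-1})$ with constants depending on $m$ (through the requirement $N\ge m$, equivalently $\delta\le 2^{-m}$). Combining the two asymptotics yields
\[
\#\ca_m(\delta)\asymp N(\delta)^{m-1}\asymp \log^{m-1}(\delta^{-1}),
\]
as claimed. There is no substantive obstacle in this argument; it is essentially bookkeeping once the stars-and-bars identification is made.
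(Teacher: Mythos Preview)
Your argument is correct and somewhat more direct than the paper's. You identify $\#\ca_m(\delta)$ with the stars-and-bars count $\binom{N-1}{m-1}$ and read off the asymptotics $\asymp N^{m-1}\asymp\log^{m-1}(\delta^{-1})$ immediately. The paper instead proceeds by induction on $m$: it slices $\ca_{m+1}(\delta)$ along the last coordinate to write it as a union $\bigcup_{k=0}^{N-(m+1)}\ca_m(2^{k+1}\delta)$, applies the induction hypothesis to each slice, and sums. Your approach is shorter and gives an exact formula; the paper's slicing argument, while slightly longer here, has the advantage that the same decomposition $\ca_{m+1}(\delta)=\bigcup_k\ca_m(2^{k+1}\delta)$ is reused elsewhere in the paper (e.g.\ in the estimate \eqref{eq:logterm} within Lemma~\ref{l:mulplicativecover}), so it fits more naturally with the surrounding arguments.
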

\begin{proof}
	We proceed by induction. For $m=1$, $\ca_1(\delta)$ contains exactly one element, namely $N-1$. Thus, $\#\ca_1(\delta)=1$, which is equal to the right-hand side of \eqref{eq:caramd}.

	Suppose that \eqref{eq:caramd} holds for some $m\ge 1$. We will use the `slicing' technique to prove that it holds for $m+1$ as well. For any $\bk=(k_1,\dots,k_m,k)\in\ca_{m+1}(\delta)$, by definition, we have
	\[k_1+\cdots+k_m+k=N-(m+1)\quad
	\Longrightarrow\quad k_1+\cdots+k_m=(N-k-1)-m.\]
	It follows that
	\[\ca_{m+1}(\delta)= \bigcup_{k=0}^{N-(m+1)}\ca_m(2^{k+1}\delta).\]
	Since $k\le N-(m+1)$ and $2^{-N-1}<\delta\le 2^{-N}$, we have
	\[2^{k+1}\delta\le 2^{N-(m+1)+1}\delta\le 2^{-m}.\]
	By induction hypothesis and using the fact $N\asymp \log_2(\delta^{-1})\asymp \log(\delta^{-1})$, we have
	\[\#\ca_{m+1}(\delta)\asymp\sum_{k=0}^{N-(m+1)}\log^{m-1}(2^{-(k+1)}\delta^{-1})\ll N\log^{m-1}(\delta^{-1})\asymp\log^m(\delta^{-1})\]
	and
	\[\#\ca_{m+1}(\delta)\gg\sum_{k=0}^{N/2-(m+1)}\log^{m-1}(2^{-(k+1)}\delta^{-1})\gg N\log^{m-1}(\delta^{-1})\asymp\log^m(\delta^{-1}),\]
	where the second estimate follows from
	\[2^{-(k+1)}\delta^{-1}\ge 2^{-N/2}\cdot 2^N\asymp \delta^{-1/2}\quad\text{for $0\le k\le N/2-(m+1)$}.\qedhere\]
\end{proof}
The Lebesgue measure of the sets $\cm'(\bq,\delta)$ and their correlations can be estimated as follows.
\begin{lem}\label{l:quasimeam}
	Let $n, m\ge 1$. For any $\bq\in \Z^n\setminus\{0\}$ and $0<\delta\le2^{-m}$, we have
	\[\lm^{nm}\big(\cm'(\bq,\delta)\big)\gg \delta\bigg(\frac{\varphi(d)}{d}\bigg)^m\log^{m-1}(\delta^{-1}),\]
	where $d=\gcd(\bq)$. Moreover, for any $\bq_1\ne\pm\bq_2\in\Z^n\setminus\{0\}$ and any $0<\delta_1,\delta_2<2^{-m}$,
	\[\lm^{nm}\big(\cm'(\bq_1,\delta_1)\cap \cm'(\bq_2,\delta_2)\big)\ll\delta_1\log^{m-1}(\delta_1^{-1})\cdot \delta_2\log^{m-1}(\delta_2^{-1}).\]
\end{lem}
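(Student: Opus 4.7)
My plan is to treat the two inequalities separately. The correlation bound is the easier of the two: apply Lemma~\ref{l:rectangledecom} to each $\cm'(\bq_i, \delta_i)$, so that the intersection $\cm'(\bq_1, \delta_1) \cap \cm'(\bq_2, \delta_2)$ is contained in the union over pairs $(\bk_1, \bk_2) \in \ca_m(\delta_1) \times \ca_m(\delta_2)$ of the rectangle intersections $R'(\bq_1, 2^{-\bk_1}) \cap R'(\bq_2, 2^{-\bk_2})$. Since $\bq_1 \ne \pm\bq_2$, Lemma~\ref{l:mea} bounds each such pairwise intersection by $\prod_j 2^{-k_{1,j}} \cdot 2^{-k_{2,j}} \asymp \delta_1 \delta_2$ (using $\sum_j k_{i,j} = N(\delta_i) - m$ for $\bk_i \in \ca_m(\delta_i)$), and Lemma~\ref{l:caramd} bounds the number of pairs by $\asymp \log^{m-1}(\delta_1^{-1}) \log^{m-1}(\delta_2^{-1})$. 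Multiplying these estimates yields the claimed correlation bound.

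For the lower bound on $\lm^{nm}(\cm'(\bq, \delta))$, I would use a disjoint decomposition. For each tuple $\bp \in \Z^m$ with $\gcd(\bq, p_j) = 1$ for all $j$, set
\[
\cn^{\bp} := \Big\{\bx \in [0,1]^{nm} : |\bq\bx_j - p_j| < 1/2 \text{ for all } j,\ \prod_{j=1}^m |\bq\bx_j - p_j| < \delta\Big\}.
\]
The $\cn^{\bp}$'s are pairwise disjoint (the nearest integer to each $\bq\bx_j$ is uniquely determined up to a null set) and each is contained in $\cm'(\bq, \delta)$, so it suffices to lower bound $\sum_{\bp} \lm^{nm}(\cn^{\bp})$. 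Applying the coarea formula in each coordinate $\bx_j \mapsto \bq\bx_j$ and interchanging summation and integration rewrites this sum as
\[
\int_{[-1/2,1/2]^m} \cha\Big(\prod_{j=1}^m |y_j| < \delta\Big) \prod_{j=1}^m \rho'(y_j;\bq)\, d\by,
\]
where $\rho(\cdot;\bq)$ denotes the density of $\bq\bx_j$ for $\bx_j$ uniform on $[0,1]^n$, and $\rho'(y;\bq) := \sum_{p \in \Z:\, \gcd(\bq,p)=1} \rho(y+p;\bq)$. The crucial observation is that Lemma~\ref{l:BV} gives $\int_{-c}^{c} \rho'(y;\bq)\, dy = \lm^n(R_j'(\bq,c)) = 2c\cdot \varphi(d)/d$ for every $c \in [0, 1/2]$, which coincides with the integral of the constant $\varphi(d)/d$ over $(-c, c)$. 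Since $\cha(\prod|y_j| < \delta)$ is symmetric in each $y_j$, Fubini lets me make this substitution coordinate by coordinate, reducing the integral to $(\varphi(d)/d)^m \cdot \lm^m\bigl(\{\by \in [-1/2, 1/2]^m : \prod_j |y_j| < \delta\}\bigr) \asymp (\varphi(d)/d)^m \, \delta \log^{m-1}(\delta^{-1})$, with the final estimate being the standard computation behind Lemma~\ref{l:meamc}.

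The main obstacle is recognizing the ``coprime density'' interpretation and the fact that $\rho'(\cdot;\bq)$, under integration against symmetric interval indicators, behaves exactly as the constant $\varphi(d)/d$; this is what turns Lemma~\ref{l:BV} into an iterable tool for multi-coordinate integration. A bare-hands alternative---decomposing via Lemma~\ref{l:rectangledecom} and applying Chung--Erd\H{o}s to $\bigcup_{\bk \in \ca_m(\delta/2^{m+1})} R'(\bq, 2^{-\bk})$---would also succeed, but requires a less transparent combinatorial bound of the form $\sum_{\bk' \in \ca_m} 2^{-\sum_j (k_j' - k_j)^+} = O_m(1)$ uniformly in $\bk \in \ca_m$.
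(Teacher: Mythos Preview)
Your treatment of the correlation bound is exactly the paper's argument: decompose each $\cm'(\bq_i,\delta_i)$ via Lemma~\ref{l:rectangledecom}, apply Lemma~\ref{l:mea} to each pair of rectangles, and count with Lemma~\ref{l:caramd}.

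For the lower bound, your approach is correct but genuinely different from the paper's. The paper factors $\bq = d\bq'$ with $d=\gcd(\bq)$ and uses the measure-preserving map $T_{\bq'}:[0,1]^{nm}\to[0,1]^m$, $\bx\mapsto \bq'\bx\bmod 1$, to identify $\cm'(\bq,\delta)$ with the preimage of a union of $\varphi(d)^m$ scaled star-shaped sets in $[0,1]^m$, each of volume $\asymp d^{-m}\delta\log^{m-1}(\delta^{-1})$. You instead push forward each $\bx_j$ under $\bx_j\mapsto\bq\bx_j$ to obtain the ``coprime density'' $\rho'(\cdot;\bq)$, and then exploit the exact identity $\int_{-c}^{c}\rho'(y;\bq)\,dy = 2c\,\varphi(d)/d$ from Lemma~\ref{l:BV}; since the integrand $\cha(\prod_j|y_j|<\delta)$ is even in each $y_j$, Fubini lets you replace $\rho'(y_j;\bq)$ by the constant $\varphi(d)/d$ one coordinate at a time. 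Your route is a little more analytic and has the pleasant feature of yielding the \emph{equality} $\sum_{\bp}\lm^{nm}(\cn^{\bp}) = (\varphi(d)/d)^m\,\lm^m\{\by\in[-\tfrac12,\tfrac12]^m:\prod_j|y_j|<\delta\}$ rather than merely a lower bound; the paper's route is more geometric and reduces in one step to the classical $m$-dimensional star computation already underlying Lemma~\ref{l:meamc}. One minor terminological point: what you call the ``coarea formula'' is really just the pushforward (change of variables) of Lebesgue measure under the linear functional $\bx_j\mapsto\bq\bx_j$.
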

\begin{proof}
	Let $\bq\in\Z^n$ and $0<\delta\le 2^{-m}$. Write $\bq'=\bq/d$. Let $\bx\in \cm'(\bq,\delta)$. By definition, there are $\bp\in \Z^m$ such that $\prod_{j=1}^{m}|\bq\bx_j-p_j|<\delta$ and $\gcd(\bq,p_j)=1$ for $1\le j\le m$. Equivalently, we have that
	\[\prod_{j=1}^{m}|d\bq'\bx_j-p_j|<\delta,\quad\text{$\gcd(d,p_j)=1$ for $1\le j\le m$}.\]
	Consider the measure preserving transformation  $T_{\bq'}:[0,1]^{nm}\to[0,1]^m$ defined by
	\[T_{\bq'}(\bx):=\bq'\bx\mod 1.\]
	It is readily verified that
	\[\begin{split}
		\cm'(\bq,\delta)
		=T_{\bq' }^{-1}\bigg(\bigg\{(x_1,\dots,x_m)\in[0,1]^{m}:&\prod_{j=1}^m|dx_j-p_j|<\delta\text{ for some $\bp\in\Z^n$}\\
			&\text{ with $\gcd(d,p_j)=1$ for all $j$}\bigg\}\bigg).
	\end{split}\]
	The set inside the brackets on the right-hand side contains $\varphi(d)^m$ `star-shaped' sets  with centers at the rational points $\bp/d$; namely, each `star-shaped' set has the following form,
	\[d^{-1}\cdot\bigg\{(x_1,\dots,x_m)\in[0,1]^{m}:\prod_{j=1}^m|x_j|<\delta\bigg\}+\bp/d,\]
	where $\alpha A+\bz:=\{\alpha\by+\bz: \by\in A\}$ for $\alpha\in \R$ and $\bz\in\R^m$.
	 More importantly, each `star-shaped' set has $m$-dimensional Lebesgue  $\asymp d^{-m}\delta\log^{m-1}(\delta^{-1})$.
	This together with the measure-preserving property of $T_{\bq'}$ yields
	\[\lm^{nm}\big(\cm'(\bq,\delta)\big)\gg \varphi(d)^m\cdot  d^{-m}\delta\log^{m-1}(\delta^{-1}).\]

	Let $\bq_1\ne\pm \bq_2\in\Z^n\setminus\{0\}$ and $0<\delta_1,\delta_2<2^{-m}$. Recall the definition of $\ca_m(\delta)$ given in \eqref{eq:amd}. By Lemma \ref{l:rectangledecom}, we have
	\[\cm'(\bq_1,\delta_1)\cap \cm'(\bq_2,\delta_2)\subset \bigcup_{\bk\in\ca_m(\delta_1)}\bigcup_{\bl\in\ca_m(\delta_2)}\big(R'(\bq_1,2^{-\bk})\cap R'(\bq_2,2^{-\bl})\big).\]
	It follows from the quasi-independent result, established in Lemma \ref{l:mea}, that\[\lm^{nm}\big(R'(\bq_1,2^{-\bk})\cap R'(\bq_2,2^{-\bl})\big)\ll 2^{-(k_1+\cdots+k_m)-(l_1+\cdots+l_m)}\ll\delta_1\delta_2,\]
	since $j_1+\cdots+j_m=N(\delta)-m\asymp\log_2(\delta^{-1})-m$ for any $\textbf{j}\in\ca_m(\delta)$. Summing over $\bk$ and $\bl$, we deduce from Lemma \ref{l:caramd} that
	\[\begin{split}
	\lm^{nm}\big(\cm'(\bq_1,\delta_1)\cap \cm'(\bq_2,\delta_2)\big)&\ll\#\ca_m(\delta_1)\cdot\#\ca_m(\delta_2)\cdot\delta_1\delta_2\\
	&\ll \delta_1\log^{m-1}(\delta_1^{-1})\cdot \delta_2\log^{m-1}(\delta_2^{-1}).\qedhere
\end{split}\]
\end{proof}
Instead of providing the complete proof, we only outline the main idea. The details are almost the same as those in the proof of Theorem \ref{t:Schmidt}.
\begin{proof}[Sketch proof of Theorem \ref{t:mulm}: divergence part]
	By Lemma \ref{l:positivedensity}, there exists a set $\Lambda\subset \N$ with positive density such that $\varphi(q)/q\ge 1/2$ for all $q\in\Lambda$. Therefore, for any $\bq\in\Z^n$ with $|\bq|=q$, by Lemmas \ref{l:meamc} and \ref{l:quasimeam},
	\[\lm^{nm}\big(\cm'(\bq,\delta)\big)\asymp \delta\log^{m-1}(\delta^{-1}).\]
	Since the quasi-independent result established in Lemma \ref{l:quasimeam} does not apply to the case $\bq_1=\pm\bq_2$, for any $q\in\N$ we define
	\begin{equation}\label{eq:deltaq}
		\Delta(q):=\{\bq\in\Z^n:|\bq|=q\text{ and $\psi(\bq)\ge \psi(-\bq)$}\}.
	\end{equation}
	The monotonicity assumption on $\psi$ ensures that
		\[\sum_{\bq\in\Z^n}\psi(\bq)\log^{m-1}(\psi(\bq)^{-1})=\infty\quad\Longrightarrow\quad\sum_{q\in\Lambda}\sum_{\bq\in\Delta(q)}\psi(\bq)\log^{m-1}(\psi(\bq)^{-1})=\infty.\]
		By Lemma \ref{l:quasimeam}, for any $\bq_1\in\Delta(|\bq_1|)$ and $\bq_2\in\Delta(|\bq_2|)$ with $|\bq_1|,|\bq_2|\in\Lambda$, we have
		\[\begin{split}
			&\lm^{nm}\big(\cm'\big(\bq_1,\psi(\bq_1)\big)\cap \cm'\big(\bq_2,\psi(\bq_2)\big)\big)\\
			\ll&\delta_1\log^{m-1}(\delta_1^{-1})\cdot \delta_2\log^{m-1}(\delta_2^{-1})\asymp \lm^{nm}\big(\cm'\big(\bq_1,\psi(\bq_1)\big)\big) \lm^{nm}\big(\cm'\big(\bq_2,\psi(\bq_2)\big)\big).
		\end{split}\]
		By Lemma \ref{l:quasiind} we have that $\cm^\times(n,m;\psi)$ has positive $nm$-dimensional Lebesgue. By the zero-one law in Lemma \ref{l:reducem}, $\cm^\times(n,m;\psi)$ must carry full Lebesgue measure.
\end{proof}
\section{Hausdorff measure of $\cm^\times(n,m;\psi)$}\label{s:Hausdorffpartm}

 For $\delta>0$ and  $\bq=(q_1,\dots,q_n)\in\Z^n$, let
\[\cm(\bq,\delta):=\bigg\{\bx\in[0,1]^{nm}:\prod_{j=1}^m\|\bq\bx_j\|<\delta\bigg\}\]
be defined as in Section \ref{s:Lebpartm}.
\subsection{Convergence parts of Theorems \ref{t:mulh} and \ref{t:mulhs}}\label{ss:convergencem}
Since the convergence parts in Theorems \ref{t:mulh} and \ref{t:mulhs} follow directly by considering the natural cover of $\cm(\bq,\delta)$, we will establish them simultaneously in this subsection.
\begin{lem}\label{l:mulplicativecover}
	Let $\bq\in\Z^n\setminus\{0\}$ and  $0<\delta\le2^{-m}$. Suppose that $f\preceq nm$. Then, there exists a finite collection $\cb$ of balls that covers $\cm(\bq,\delta)$ and satisfies
	\[|B|\ll \frac{1}{|\bq|}\text{ for all $B\in\cb$}\qaq \sum_{B\in\cb}f(|B|)\ll   f\bigg(\frac{\delta}{|\bq|}\bigg)\bigg(\frac{\delta}{|\bq|}\bigg)^{1-nm} |\bq|\log^{m-1}(\delta^{-1}).\]
	Suppose further that  $f\preceq(nm-1+s)$ for some $0<s<1$. Then the $\log^{m-1}$ term in the above inequality can be removed.
\end{lem}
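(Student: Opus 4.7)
The plan is to apply the dyadic rectangular decomposition from Lemma \ref{l:rectangledecom}, namely $\cm(\bq,\delta) \subset \bigcup_{\bk \in \ca_m(\delta)} R(\bq, 2^{-\bk})$, and then, for each $\bk = (k_1, \ldots, k_m)$, to cover each of the $\asymp |\bq|^m$ product slabs that make up $R(\bq, 2^{-\bk})$ by balls of a common radius $b_m(\bk) := 2^{-\max_j k_j}/|\bq|$. Each such product slab is a product of $m$ slabs in $[0,1]^n$ and has $m(n-1)$ sides of length $\asymp 1$ together with $m$ thin sides of lengths $b_j(\bk) := 2^{-k_j}/|\bq|$; since $\max_j k_j \ge 0$ one automatically gets $b_m(\bk) \le 1/|\bq|$, so the requirement $|B| \ll 1/|\bq|$ comes for free. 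A direct count gives $\asymp b_m^{1-nm} \prod_{j<m} b_j$ balls per slab, and using $\prod_{j=1}^m b_j \asymp \delta/|\bq|^m$ (which follows from $k_1 + \cdots + k_m = N(\delta) - m$ together with $2^{-N(\delta)} \asymp \delta$), the $f$-weight of this cover restricted to a single $\bk$-layer simplifies to $\asymp f(b_m(\bk))\, b_m(\bk)^{-nm}\, \delta$.

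For the first (general) bound under $f \preceq nm$: because $\max_j k_j \le N(\delta) - m$, one has $b_m(\bk) \ge \delta/|\bq|$, and the monotonicity of $f(r)/r^{nm}$ yields $f(b_m)/b_m^{nm} \le f(\delta/|\bq|)/(\delta/|\bq|)^{nm}$ uniformly in $\bk$. Summing over the $\#\ca_m(\delta) \asymp \log^{m-1}(\delta^{-1})$ layers (Lemma \ref{l:caramd}) recovers exactly the first claimed bound, with its logarithmic factor.

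Removing the $\log^{m-1}(\delta^{-1})$ under the stronger hypothesis $f \preceq (nm-1+s)$ with $s < 1$ is the delicate step. Here I would instead use $f(b_m)/b_m^{nm} \le f(1/|\bq|)\, |\bq|^{nm-1+s}\, b_m^{s-1}$, reducing the task to estimating $\sum_{\bk \in \ca_m(\delta)} 2^{\max_j k_j\,(1-s)}$. Although this set has polylogarithmic size, the sum is geometrically concentrated near the extremal value $\max_j k_j = N(\delta) - m$: writing $u = N(\delta) - m - \max_j k_j \ge 0$, the number of $\bk$'s with a given $u$ is $O(u^{m-2})$, and the factor $2^{-u(1-s)}$ (with $1-s > 0$) makes the tail sum absolutely convergent, giving $\sum_{\bk} 2^{\max_j k_j(1-s)} \ll 2^{(N(\delta)-m)(1-s)} \asymp \delta^{s-1}$. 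A final application of $f \preceq (nm-1+s)$ then converts $f(1/|\bq|)$ back to $f(\delta/|\bq|)$ and produces exactly the target log-free bound. The main obstacle is this geometric-series estimate: the hypothesis $s < 1$ is used crucially to make $\sum_u 2^{-u(1-s)} u^{m-2}$ converge, and the argument degenerates exactly at $s = 1$, consistent with the genuine appearance of the $\log^{m-1}$ factor in the Lebesgue estimate of Lemma \ref{l:meamc}.
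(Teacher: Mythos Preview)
Your overall strategy is correct and matches the paper's: decompose via Lemma~\ref{l:rectangledecom}, cover each $R(\bq,2^{-\bk})$ by balls of radius $2^{-k_{\max}}/|\bq|$, and observe that the per-layer $f$-weight collapses to $\asymp f(b_m)b_m^{-nm}\delta$ with $b_m=2^{-k_{\max}}/|\bq|$. The first bound under $f\preceq nm$ is exactly as in the paper.

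There is, however, a genuine error in your log-free step. The inequality you invoke,
\[
\frac{f(b_m)}{b_m^{nm}}\le f(1/|\bq|)\,|\bq|^{nm-1+s}\,b_m^{s-1},
\]
is equivalent to $f(b_m)/b_m^{nm-1+s}\le f(1/|\bq|)/(1/|\bq|)^{nm-1+s}$, and since $f\preceq(nm-1+s)$ means $r\mapsto f(r)/r^{nm-1+s}$ is \emph{non-increasing}, this would require $b_m\ge 1/|\bq|$. But $b_m=2^{-k_{\max}}/|\bq|\le 1/|\bq|$, so the inequality points the wrong way and your intermediate bound is invalid. (Take $f(r)=r^{nm-1}$ to see it fail concretely.) Your subsequent ``convert back'' step uses $f\preceq(nm-1+s)$ in the correct direction, but it cannot repair the earlier false inequality.

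The fix is to compare against $\delta/|\bq|$ rather than $1/|\bq|$: you already noted $b_m\ge\delta/|\bq|$, so $f\preceq(nm-1+s)$ legitimately gives
\[
f(b_m)b_m^{-nm}\,\delta \;\ll\; f\!\left(\frac{\delta}{|\bq|}\right)\left(\frac{\delta}{|\bq|}\right)^{1-nm}|\bq|\cdot 2^{(N-k_{\max})(s-1)}.
\]
Now the geometric sum runs over $v:=N-k_{\max}\ge m$ with multiplicity $O(v^{m-2})$ and weight $2^{v(s-1)}$; since $s<1$ this sum is $O(1)$ outright, and no back-conversion is needed. This is precisely the route the paper takes.
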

\begin{proof}
	By Lemma \ref{l:rectangledecom},
	\[\cm(\bq,\delta)\subset \bigcup_{\bk\in\ca_m(\delta)}R(\bq,2^{-\bk}),\]
	where recall that $2^{-\bk}:=(2^{-k_1},\dots,2^{-k_m})$,
	\[\ca_m(\delta):=\{\bk\in\Z^m:k_1,\dots,k_m\ge 0, k_1+\cdots+k_m=N-m\}\]
	and $N=N(\delta)\ge m$ is the unique integer such that $2^{-N-1}<\delta\le 2^{-N}$. By symmetry, $\ca_m(\delta)$ can be decompose into $m$ sets $\ca_m(\delta;i)$ ($1\le i\le m$) with the property that for any $\bk\in\ca_m(\delta;i)$, $k_i$ is the maximum value among $k_1,\dots,k_m$. In the rest of the proof, for ease of discussion, by symmetry we only consider $\ca_m(\delta;m)$. For each $\bk\in\ca_m(\delta;m)$, we will cover $R(\bq,2^{-\bk})$ by balls with radius $2^{-k_m}/|\bq|$. In view of the discussion given in Section \ref{ss:convergence}, the $f$-volume cover of $R(\bq,2^{-\bk})$ is majorized by
	\begin{align}
		&\ll |\bq|^m\cdot f\bigg(\frac{2^{-k_m}}{|\bq|}\bigg)\bigg(\frac{2^{-k_m}}{|\bq|}\bigg)^{1-n}\prod_{j=1}^{m-1}\bigg(\frac{2^{-{k_j}}}{|\bq|}\bigg(\frac{2^{-k_m}}{|\bq|}\bigg)^{-n}\bigg)\notag\\
		&=  f\bigg(\frac{2^{-k_m}}{|\bq|}\bigg)\bigg(\frac{2^{-k_m}}{|\bq|}\bigg)^{1-nm} |\bq|\cdot2^{-(k_1+\cdots+k_{m-1})}\notag\\
		&\asymp f\bigg(\frac{2^{-k_m}}{|\bq|}\bigg)\bigg(\frac{2^{-k_m}}{|\bq|}\bigg)^{1-nm}|\bq|\cdot 2^{-N+k_m},\label{eq:f2km}
	\end{align}
	since $k_1+\cdots+k_m=N-m$. Suppose that $f\preceq nm$. Since $k_m\le N-m<N$,
	\[f\bigg(\frac{2^{-k_m}}{|\bq|}\bigg)\bigg(\frac{2^{-k_m}}{|\bq|}\bigg)^{-nm}\le f\bigg(\frac{2^{-N}}{|\bq|}\bigg)\bigg(\frac{2^{-N}}{|\bq|}\bigg)^{-nm}.\]
	Substitute this inequality,  \eqref{eq:f2km} is majorized by
	\[\ll f\bigg(\frac{2^{-N}}{|\bq|}\bigg)\bigg(\frac{2^{-N}}{|\bq|}\bigg)^{1-nm}|\bq|\asymp f\bigg(\frac{\delta}{|\bq|}\bigg)\bigg(\frac{\delta}{|\bq|}\bigg)^{1-nm} |\bq|,\]
	where we use $2^{-N-1}< \delta\le 2^{-N}$.
	The first point of the lemma follows from $\#\ca_m(\delta;m)\le\#\ca_m(\delta)\asymp\log^{m-1}(\delta^{-1})$ (see Lemma \ref{l:caramd}).

Suppose now that $f\preceq (nm-1+s)$ for some $0<s<1$. Then,
	\[f\bigg(\frac{2^{-k_m}}{|\bq|}\bigg)\bigg(\frac{2^{-k_m}}{|\bq|}\bigg)^{-(nm-1+s)}\le f\bigg(\frac{2^{-N}}{|\bq|}\bigg)\bigg(\frac{2^{-N}}{|\bq|}\bigg)^{-(nm-1+s)}.\]
	Therefore, the estimate of \eqref{eq:f2km} can be improved as
	\[\ll  f\bigg(\frac{2^{-N}}{|\bq|}\bigg)\bigg(\frac{2^{-N}}{|\bq|}\bigg)^{1-nm}|\bq|\cdot 2^{(N-k_m)(s-1)}.\]
	By Lemma \ref{l:caramd}, we have
	\begin{align}
		\sum_{\bk\in\ca_m(\delta;m)} 2^{(N-k_m)(s-1)}&\ll \sum_{k_m=0}^{N-m}\big(\#\ca_{m-1}(2^{k_m+1}\delta)\big) 2^{(N-k_m)(s-1)}\notag\\
		&\ll\sum_{k_m=0}^{N-m}(N-k_m)^{m-2}\cdot 2^{(N-k_m)(s-1)}\ll 1,\label{eq:logterm}
	\end{align}
	since $s<1$. This means that the $f$-volume cover of $\cm(\bq,\delta)$ does not exceed
	\[\ll   f\bigg(\frac{2^{-N}}{|\bq|}\bigg)\bigg(\frac{2^{-N}}{|\bq|}\bigg)^{1-nm}|\bq| \asymp  f\bigg(\frac{\delta}{|\bq|}\bigg)\bigg(\frac{\delta}{|\bq|}\bigg)^{1-nm}|\bq|.\qedhere\]
\end{proof}
As a consequence, we have the following two results.
\begin{proof}[Proof of Theorem \ref{t:mulh}: convergence part]
	Assume that $f\preceq (nm-1+s)$ for some $0<s<1$, and that
	\begin{equation}\label{eq:conconm}
		\sum_{\bq\in\Z^n\setminus\{0\}}f\bigg(\frac{\psi(\bq)}{|\bq|}\bigg)\bigg(\frac{\psi(\bq)}{|\bq|}\bigg)^{1-nm}|\bq|<\infty.
	\end{equation}
	By Lemma \ref{l:mulplicativecover} and the convergence of the series in \eqref{eq:conconm},
	\[\hm^f\big(\cm^\times(n,m;\psi)\big)\le  \liminf_{q\to\infty}\sum_{\bq\in\Z^n:|\bq|\ge q}f\bigg(\frac{\psi(\bq)}{|\bq|}\bigg)\bigg(\frac{\psi(\bq)}{|\bq|}\bigg)^{1-nm}|\bq|=0.\qedhere\]
\end{proof}
\begin{proof}[Proof of Theorem \ref{t:mulhs}: convergence part]
	Assume that $f\preceq nm$, and that
	\begin{equation}\label{eq:conconms}
		\sum_{\bq\in\Z^n\setminus\{0\}}f\bigg(\frac{\psi(\bq)}{|\bq|}\bigg)\bigg(\frac{\psi(\bq)}{|\bq|}\bigg)^{1-nm}|\bq|\log^{m-1}(\psi(\bq)^{-1})<\infty.
	\end{equation}
	By Lemma \ref{l:mulplicativecover} and the convergence of the series in \eqref{eq:conconms},
	\[\hm^f\big(\cm^\times(n,m;\psi)\big)\le  \liminf_{q\to\infty}\sum_{\bq\in\Z^n:|\bq|\ge q}f\bigg(\frac{\psi(\bq)}{|\bq|}\bigg)\bigg(\frac{\psi(\bq)}{|\bq|}\bigg)^{1-nm}|\bq|\log^{m-1}(\psi(\bq)^{-1})=0.\qedhere\]
\end{proof}
\subsection{Divergence part of Theorem \ref{t:mulh}}\label{ss:divergencem}
Assume that
\begin{equation}\label{eq:multiplihaudiv}
	\sum_{\bq\in\Z^n\setminus\{0\}}f\bigg(\frac{\psi(\bq)}{|\bq|}\bigg)\bigg(\frac{\psi(\bq)}{|\bq|}\bigg)^{1-nm}|\bq|=\infty.
\end{equation}
Since the sum does not carry any $\log$ term, the Hausdorff measure result for the weighted case $W(n,m;\Psi)$ is applicable.
Let $\Psi=(\psi_1,\dots,\psi_m)$ be an $m$-tuple of multivariable functions defined by $\psi_1=\cdots=\psi_{m-1}=1$ and $\psi_m=\psi$.
It is easy to see that $W(n,m;\Psi)\subset \cm^\times(n,m;\psi)$. Moreover, the monotonicity of $\psi$ implies so is $\Psi$. For such $\Psi$,
\[\begin{split}
	t_{\bq}(\Psi,f)=&\min\bigg\{f(|\bq|^{-1})|\bq|^{-(1-n)m},f\bigg(\frac{\psi(\bq)}{|\bq|}\bigg)\bigg(\frac{\psi(\bq)}{|\bq|}\bigg)^{(1-n)m}\psi(\bq)^{1-m}\bigg\}\\
	=& \min\bigg\{f(|\bq|^{-1})|\bq|^{1-nm},f\bigg(\frac{\psi(\bq)}{|\bq|}\bigg)\bigg(\frac{\psi(\bq)}{|\bq|}\bigg)^{1-nm}\bigg\}\cdot |\bq|^{1-m}\\
	=&f\bigg(\frac{\psi(\bq)}{|\bq|}\bigg)\bigg(\frac{\psi(\bq)}{|\bq|}\bigg)^{1-nm}|\bq|^{1-m},
\end{split}\]
since $(nm-1)\preceq f$. The divergence of the series in \eqref{eq:multiplihaudiv} implies that
\[\sum_{\bq\in\Z^n\setminus\{0\}}	t_{\bq}(\Psi,f)|\bq|^m=\infty.\]
Thus, by Theorem \ref{t:main},
\[\hm^f\big(\cm^\times(n,m;\psi)\big)\ge \hm^f\big(W(n,m;\Psi)\big)=\hm^f([0,1]^{nm}).\]
\subsection{Divergence part of Theorem \ref{t:mulhs}}
Let $(nm-1)\preceq f\prec nm$ be a dimension function such that for any $0<t<1$ there exists $r_0=r_0(t)$ for which
\begin{equation}\label{eq:fasymprnm}
	\frac{f(\alpha r)}{f(r)}\asymp \alpha^{nm},\quad \text{for all $1<\alpha<r^{-t}$ and $\alpha r<r_0$},
\end{equation}
where the implied constant may depend on $t$. We will prove that $\cm^\times(n,m;\psi)$ has full Hausdorff $f$-measure under the additional assumption:
\begin{equation}\label{eq:multiplihaudivs}
	\sum_{\bq\in\Z^n\setminus\{0\}}f\bigg(\frac{\psi(\bq)}{|\bq|}\bigg)\bigg(\frac{\psi(\bq)}{|\bq|}\bigg)^{1-nm}|\bq|\log^{m-1}(\psi(\bq)^{-1})=\infty.
\end{equation}
Let us begin by outlining our strategy:
\begin{enumerate}[\upshape (1)] \item First, we use the divergence of the series in \eqref{eq:multiplihaudivs} to define a new function $\phi$ such that $\phi\ge\psi$ and $\cm^\times(n,m;\phi)$ has full $nm$-dimensional Lebesgue measure.
\item Second, we apply Lemma \ref{l:rectangledecom} to rephrase $\cm^\times(n,m;\phi)$ as a $\limsup$ set defined by balls.
\item Finally, we estimate the Hausdorff $f$-content of the set $B \cap \cm(\bq, \psi(\bq))$ for a suitably chosen ball $B$ determined in the previous step, where the assumption \eqref{eq:fasymprnm} plays a crucial role. Combined with the `balls-to-rectangles' mass transference principle (see Theorem \ref{t:weaken}), this completes the proof. \end{enumerate}

To avoid some trivial discussions, we may assume that
\begin{equation}\label{eq:sumpsicon}
	\sum_{\bq\in\Z^n\setminus\{0\}}\psi(\bq)\log^{m-1}(\psi(\bq)^{-1})<\infty.
\end{equation}
Otherwise, by Theorem \ref{t:mulm}, we have $\hm^{nm}(\cm^\times(n,m;\psi))=1$, and thus the proof is complete.

The convergence of the series in \eqref{eq:sumpsicon} along with the monotonicity of $\psi$ implies that for all $\bq\in\Z^n$ with $|\bq|$ large enough,
\begin{equation}\label{eq:psiq<q-1}
	\psi(\bq)\le |\bq|^{-1}.
\end{equation}
We claim that for any dimension function $f$ satisfying \eqref{eq:fasymprnm}, it holds that for any $s < nm$,
\[f(r)\le r^{s}\quad \text{whenever $r$ is sufficiently small.}\]
 Seeking a contradiction, suppose that there exists infinitely many $r$ such that $f(r)>r^{s}$. Applying \eqref{eq:fasymprnm} with $t=s/(nm)$ and $\alpha=r^{-t}$, we get
\[f(\alpha r)=f(r^{1-t})\asymp r^{-tnm}f(r)\ge r^{-s}r^s=1,\]
which contradicts that $f(r)\to 0$ as $r\to 0^+$. Write
\[\phi(\bq)=f\bigg(\frac{\psi(\bq)}{|\bq|}\bigg)\bigg(\frac{\psi(\bq)}{|\bq|}\bigg)^{1-nm}|\bq|.\]
Let $s=nm-1/4$. By the claim and \eqref{eq:psiq<q-1}, for any $|\bq|$ large enough,
\[\begin{split}
	&f\bigg(\frac{\psi(\bq)}{|\bq|}\bigg)\le \bigg(\frac{\psi(\bq)}{|\bq|}\bigg)^{nm-1/4}\\
	\Longrightarrow\quad&\phi(\bq)=f\bigg(\frac{\psi(\bq)}{|\bq|}\bigg)\bigg(\frac{\psi(\bq)}{|\bq|}\bigg)^{1-nm}|\bq|\le \psi(\bq)^{3/4}|\bq|^{1/4}\ll \psi(\bq)^{1/2}.
\end{split}\]
On the other hand, by $f\prec nm$,
\begin{equation}\label{eq:phiq>psiq}
	\phi(\bq)=f\bigg(\frac{\psi(\bq)}{|\bq|}\bigg)\bigg(\frac{\psi(\bq)}{|\bq|}\bigg)^{1-nm}|\bq|\ge\psi(\bq).
\end{equation}
Therefore, \eqref{eq:multiplihaudivs} implies that
\[\sum_{\bq\in\Z^n\setminus\{0\}}\phi(\bq)\log^{m-1}(\phi(\bq)^{-1})=\infty.\]
By Theorem \ref{t:mulm}, $\cm^\times(n,m;\phi)$ has full $nm$-dimensional Lebesgue measure. Recall the definition of $\ca_m(\delta)$ in \eqref{eq:amd}. By Lemma \ref{l:rectangledecom},
\[\cm^\times(n,m;\phi)=\limsup_{\bq\in\Z^n:|\bq|\to\infty}\cm\big(\bq,\phi(\bq)\big)\subset \limsup_{\bq\in\Z^n:|\bq|\to\infty}\bigcup_{\bk\in\ca_m(\phi(\bq))}R(\bq,2^{-\bk}).\]
For each set $R(\bq,2^{-\bk})$, we can cover it by a finite collection $\mathcal C_\phi(\bq,\bk)$ of balls with centers at $R(\bq,2^{-\bk})$ and radius $2^{m-k_{\max}}/|\bq|:=2^{m-\max_{1\le i\le m}k_i}/|\bq|$. Therefore, the $\limsup$ set
\[\limsup_{\bq\in\Z^n:|\bq|\to\infty}\bigcup_{\bk\in\ca_m(\phi(\bq))} \bigcup_{B\in\mathcal{C}_\phi(\bq,\bk)}B.\]
defined by balls has full $nm$-dimensional Lebesgue measure. Observe that
\begin{equation}\label{eq:Bcapmul}
	\limsup_{\bq\in\Z^n:|\bq|\to\infty}\bigcup_{\bk\in\ca_m(\phi(\bq))} \bigcup_{B\in\mathcal{C}_\phi(\bq,\bk)}B\cap \cm\big(\bq,\psi(\bq)\big)\subset \cm^\times(n,m;\psi).
\end{equation}
The full Hausdorff $f$-measure statement will hold for $\cm^\times(n,m;\psi)$ once we prove the following result.
\begin{lem}
	Let $B$ be as in \eqref{eq:Bcapmul}. Suppose that \eqref{eq:fasymprnm} holds. Then,
	\[\hc^f\big(B\cap \cm\big(\bq,\psi(\bq)\big)\big)\gg |B|^{nm}.\]
	Consequently, $\hm^f(\cm^\times(n,m;\psi))=\hm^f([0,1]^{nm})$.
\end{lem}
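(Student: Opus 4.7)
The plan is to exhibit, inside $B\cap\cm(\bq,\psi(\bq))$, a single hyperrectangle $R^{*}$ whose Hausdorff $f$-content is $\gg r_B^{nm}$, where $r_B := 2^{m-k_{\max}}/|\bq|$ is the radius of $B$, and then invoke Theorem \ref{t:weaken} applied to the limsup of balls in \eqref{eq:Bcapmul} to conclude $\hm^f(\cm^\times(n,m;\psi))=\hm^f([0,1]^{nm})$. To construct $R^{*}$, I would invoke Lemma \ref{l:rectangledecom} for $\cm(\bq,\psi(\bq))$, so that $\cm(\bq,\psi(\bq))\supset R(\bq,2^{-\bl^{*}})$ for each $\bl^{*}\in\ca_m(\psi(\bq))$, and then select the balanced index $l^{*}_j\approx(N(\psi(\bq))-m)/m$ for all $j$. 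Near the center of $B$, $B\cap R(\bq,2^{-\bl^{*}})$ contains a hyperrectangle $R^{*}$ with $m$ short sides of length $s_{*}:=\min\{r_B,\psi(\bq)^{1/m}/|\bq|\}$ (one per $\R^n$-block, in the $\bq$-direction) and $m(n-1)$ long sides of length $\asymp r_B$.

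Since $(nm-1)\preceq f\prec nm$, Proposition \ref{p:rec} applied with $k=nm-1$ yields
\[
\hc^f(R^{*})\asymp\bigg(\frac{r_B}{s_{*}}\bigg)^{m(n-1)}f(s_{*}).
\]
If $s_{*}=r_B$, then $\hc^f(R^{*})\asymp f(r_B)\gg r_B^{nm}$ by $f\prec nm$, and we are finished. Otherwise $s_{*}=\psi(\bq)^{1/m}/|\bq|<r_B$, and I would apply the scaling hypothesis \eqref{eq:fasymprnm} with $r=s_{*}$ and $\alpha=r_B/s_{*}$ (verifying $\alpha<s_{*}^{-t}$ for any fixed small $t\in(0,1)$ and $|\bq|$ large, using the polynomial bound $\phi(\bq)\ll\psi(\bq)^{1/2}$ established in this subsection together with $r_B\ll\phi(\bq)^{1/m}/|\bq|$). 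This gives $f(s_{*})\asymp(s_{*}/r_B)^{nm}f(r_B)$, so
\[
\hc^f(R^{*})\asymp\bigg(\frac{s_{*}}{r_B}\bigg)^{m}f(r_B).
\]

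The heart of the argument is then a second application of \eqref{eq:fasymprnm}, with $r=\psi(\bq)/|\bq|$ and $\alpha=r_B|\bq|/\psi(\bq)$, which combined with the definition $\phi(\bq)=f(\psi(\bq)/|\bq|)(\psi(\bq)/|\bq|)^{1-nm}|\bq|$ yields $f(r_B)/r_B^{nm}\asymp \phi(\bq)/\psi(\bq)$. Meanwhile, the constraint $\bk\in\ca_m(\phi(\bq))$ forces $k_{\max}\ge(N(\phi(\bq))-m)/m$, whence $(r_B|\bq|)^{m}=2^{m(m-k_{\max})}\ll\phi(\bq)$, so $(r_B/s_{*})^{m}=(r_B|\bq|)^{m}/\psi(\bq)\ll\phi(\bq)/\psi(\bq)\asymp f(r_B)/r_B^{nm}$. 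Inverting and substituting into the previous display delivers $\hc^f(R^{*})\gg r_B^{nm}$, as required.

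The main obstacle is this final inequality, which requires two carefully aligned applications of \eqref{eq:fasymprnm} at different scales and the precise quantitative relation between $\phi$ and $\psi$ built into the construction of $\phi$ earlier in the subsection; the design of $\phi$ is exactly what produces a ratio $r_B/s_{*}$ compatible with the $f$-scaling, thereby absorbing the extra $\log^{m-1}$ factor that distinguishes Theorem \ref{t:mulhs} from Theorem \ref{t:mulh}.
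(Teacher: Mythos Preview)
Your construction of the inner hyperrectangle $R^{*}$ has a genuine gap. You choose the balanced index $\bl^{*}$ with all $l_{j}^{*}\approx(N(\psi(\bq))-m)/m$ and assert that $B\cap R(\bq,2^{-\bl^{*}})$ contains a hyperrectangle of the stated shape, but this intersection can be empty. The ball $B\in\mathcal C_\phi(\bq,\bk)$ has radius $r_B=2^{m-k_{\max}}/|\bq|$ and is centred at some $\bx^{0}\in R(\bq,2^{-\bk})$, so we only know $\|\bq\bx_{j}^{0}\|<2^{-k_j}$ in each block $j$. For a coordinate $j$ with $k_j$ small (and unbalanced tuples such as $k_j=0$ certainly occur in $\ca_m(\phi(\bq))$), the centre can lie at distance $\asymp 1/|\bq|$ from the nearest hyperplane, while both $r_B$ and the slab width $2^{-l_j^{*}}/|\bq|\asymp\psi(\bq)^{1/m}/|\bq|$ are far smaller; the ball then misses $R_j(\bq,2^{-l_j^{*}})$ entirely. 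Your subsequent content calculation, though internally consistent, rests on a hyperrectangle that need not exist.

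The repair is exactly the paper's choice: set $l_j=k_j$ for every $j\ne j_{\max}$, so that those slabs automatically contain the centre of $B$, and load all the extra shrinking onto $l_{j_{\max}}$ alone. Because $\|\bq\bx_{j_{\max}}^{0}\|<2^{-k_{\max}}\le r_B|\bq|$, the ball does reach the hyperplane in that block, and the intersection contains a hyperrectangle with a \emph{single} short side of length $2^{k_1+\cdots+k_{m-1}}\psi(\bq)/|\bq|$ and $nm-1$ sides of length $\asymp r_B$ (not $m$ short sides). One application of \eqref{eq:fasymprnm} with $r=\psi(\bq)/|\bq|$ and $\alpha=2^{k_1+\cdots+k_{m-1}}\le\psi(\bq)^{-(m-1)/m}$ (so $\alpha<r^{-t}$ for any fixed $t\in((m-1)/m,1)$) then yields $\hc^f\gg r_B^{nm}$ directly, without the second scaling step.
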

\begin{proof}
	Assume that $B$ is obtained from $R(\bq,2^{-\bk})$ for some $\bk\in \ca_m(\phi(\bq))$. Assume without loss of generality that $k_m=k_{\max}=\max_{1\le i\le m}k_i$. So the radius of $B$ is $2^{m-k_{\max}}/|\bq|=2^{m-k_m}/|\bq|$. We first claim that $B\cap \cm(\bq,\psi(\bq))$ contains a hyperrectangle, denoted by $R$, with side lengths
	\begin{equation}\label{eq:sidemul}
		\underbrace{\frac{2^{-k_m}}{|\bq|},\dots,\frac{2^{-k_m}}{|\bq|}}_{nm-1},\frac{2^{k_1+\cdots+k_{m-1}}\cdot\psi(\bq)}{|\bq|}.
	\end{equation}
	In the spirit of Lemma \ref{l:rectangledecom}, it is not difficult to verify that for $\bl\in\Z^m$ with $l_j=k_j$ ($1\le j\le m-1$) and $l_m=\log_2(\psi(\bq)^{-1})-l_1-\cdots -l_{m-1}$,
	\[R(\bq,2^{-\bl})\subset \cm\big(\bq,\psi(\bq)\big).\]
	Note that $B$ centered at $R(\bq,2^{-\bk})$ and
	\[R(\bq,2^{-\bl})=\prod_{j=1}^{m}\big\{\bx_j\in[0,1]^n:\|\bq\bx_j\|<2^{-l_j}\big\}.\]
We can see that $B\cap R(\bq,2^{-\bl})$ contains a hyperrectangle with side lengths described in \eqref{eq:sidemul}, and so the claim follows.

	Since $\psi(\bq)\le \phi(\bq)$ (see \eqref{eq:phiq>psiq}), for any $\bk\in \ca_m(\phi(\bq))$, by definition,
	\[2^{-k_m}\ge 2^{k_1+\cdots+k_{m-1}+m}\cdot \phi(\bq)> 2^{k_1+\cdots+k_{m-1}}\cdot\psi(\bq).\]
	By the monotonicity of Hausdorff content and apply Proposition \ref{p:rec} with $(nm-1)\preceq f\prec nm$,
	\[\begin{split}
		&\hc^f\big(B\cap \cm\big(\bq,\psi(\bq)\big)\big)
		\ge\hc^f(R)\\
	\asymp& \bigg(\frac{2^{-km}}{|\bq|}\bigg)^{nm-1}\bigg(\frac{2^{k_1+\cdots+k_{m-1}}\cdot\psi(\bq)}{|\bq|}\bigg)^{1-nm}f\bigg(\frac{2^{k_1+\cdots+k_{m-1}}\cdot\psi(\bq)}{|\bq|}\bigg).
	\end{split}\]
	By the definition of $\ca_m(\phi(\bq))$ and using $k_m=\max_{1\le i\le m}k_i$, we have
	\[\begin{split}
		k_1+\cdots+k_{m-1}\le \frac{(m-1)(\log_2(\phi(\bq)^{-1})-m)}{m}\ll \frac{(m-1)\log_2(\psi(\bq)^{-1})}{m}.
	\end{split}\]
	Therefore, \eqref{eq:fasymprnm} is applicable. By that formula,
	\[f\bigg(\frac{2^{k_1+\cdots+k_{m-1}}\cdot\psi(\bq)}{|\bq|}\bigg)\asymp f\bigg(\frac{\psi(\bq)}{|\bq|}\bigg)\cdot 2^{(k_1+\cdots+k_{m-1})nm}.\]
	It follows from the definition of $\phi(\bq)$ that
	\[\begin{split}
		\hc^f(R)
		&\asymp \bigg(\frac{2^{-km}}{|\bq|}\bigg)^{nm-1}2^{k_1+\cdots+k_{m-1}}\bigg(\frac{\psi(\bq)}{|\bq|}\bigg)^{1-nm}f\bigg(\frac{\psi(\bq)}{|\bq|}\bigg)\\
		&\asymp \bigg(\frac{2^{-km}}{|\bq|}\bigg)^{nm-1}2^{-k_m}\phi(\bq)^{-1}\cdot \bigg(\frac{\psi(\bq)}{|\bq|}\bigg)^{1-nm}f\bigg(\frac{\psi(\bq)}{|\bq|}\bigg)\\
		&=\bigg(\frac{2^{-km}}{|\bq|}\bigg)^{nm}\asymp|B|^{nm}.
	\end{split}\]
	The last conclusion of the lemma follows from Theorem \ref{t:weaken} directly.
\end{proof}
 \section{Fourier dimension of $\cm^\times(n,m;\psi)$}\label{s:Fourierpartm}
 Assume that $\sum_{\bq\in\Z^n\setminus\{0\}}^\infty\psi(\bq)\log^{m-1}(\psi(\bq)^{-1})<\infty$. Recall that
 \[\tau(\psi):=\inf\biggl\{\tau\ge 0:\sum_{\bq\in\Z^n\setminus\{0\}}\bigg(\frac{\psi(\bq)^{1/m}}{\bq}\bigg)^{\tau}<\infty\biggr\}.\]
 Note that no monotonicity assumption was posed on $\psi$.
 \subsection{Upper bound for $\fdim \cm^\times(n,m;\psi)$}
Assume for the contrary that there exists a measure $\mu$ supported on $\cm^\times(n,m;\psi)$ and $\tau>\tau(\psi)$ such that $|\widehat{\mu}(\bx)|\ll|\bx|^{-\tau}$ for all $\bx\in \R^{nm}$. By decreasing $\tau$ a little bit if necessary, we may assume that $\tau$ is not an integer. The proof of the upper bound will follow the same strategy as that of $\fdim W(n,m;\Psi)$. To this end, we need to estimate the $\mu$-measure of $\cm(\bq,\delta)$, where \[\cm(\bq,\delta):=\bigg\{\bx\in[0,1]^{nm}:\prod_{j=1}^m\|\bq\bx_j\|<\delta\bigg\}\]
is defined in the same way as in Section \ref{s:Lebpartm}.
 \begin{lem}\label{l:fdimmeauppmul}
 	Let $\bq\in \Z^n\setminus\{0\}$ and $0<\delta\le2^{-m}$. We have
 	\[\mu\big(\cm(\bq,\delta)\big)\ll (\delta+q^{-\tau}\delta^{\tau/m})\log^{m-1}(\delta^{-1}).\]
 \end{lem}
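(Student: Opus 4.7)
The plan is to decompose $\cm(\bq,\delta)$ as a union of rectangles indexed by $\ca_m(\delta)$ via Lemma \ref{l:rectangledecom}, apply the Fourier-analytic rectangle bound of Lemma \ref{l:meaupp} to each piece, and then sum the resulting estimates, exploiting that $\#\ca_m(\delta)\asymp\log^{m-1}(\delta^{-1})$ by Lemma \ref{l:caramd}.

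First, I would fix $\bk\in\ca_m(\delta)$, set $\delta_j:=2^{-k_j}$ (so that $\delta_1\cdots\delta_m\asymp\delta$), and combine Lemma \ref{l:meaupp} with the Fourier decay hypothesis $|\widehat\mu(\bx)|\ll|\bx|^{-\tau}$ for some $\tau>\tau(\psi)$. Since $|(t_1\bq,\dots,t_m\bq)|\asymp|\bt|\cdot|\bq|$, this yields
\[
\mu\bigl(R(\bq,2^{-\bk})\bigr)\ll \delta\bigl(1+q^{-\tau}S_{\bq,\bk}\bigr),\qquad S_{\bq,\bk}=\sum_{\substack{\bt\in\Z^m\setminus\{0\}\\ |t_j|\le 2/\delta_j}}|\bt|^{-\tau}.
\]
By shrinking $\tau$ slightly I may assume it is non-integer; letting $k$ be the unique integer with $k<\tau<k+1$ and writing $\delta_{(1)}\le\dots\le\delta_{(m)}$ for the ascending rearrangement of the $\delta_j$'s, the dyadic counting outlined in the Remark following the upper bound proof for $\fdim W(n,m;\Psi)$ in Section \ref{s:Fourierpart} gives $S_{\bq,\bk}\asymp(\delta_{(k+1)}\cdots\delta_{(m)})^{-1}\delta_{(k+1)}^{\tau-k}$, whence
\[
\mu\bigl(R(\bq,2^{-\bk})\bigr)\ll\delta+q^{-\tau}\,\delta_{(1)}\cdots\delta_{(k)}\delta_{(k+1)}^{\tau-k}.
\]

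The crux of the argument, and the step I expect to present the main technical subtlety, is the elementary inequality
\[
\delta_{(1)}\cdots\delta_{(k)}\delta_{(k+1)}^{\tau-k}\le \delta^{\tau/m},
\]
valid for every $\bk\in\ca_m(\delta)$. Setting $a_i:=-\log\delta_{(i)}\ge 0$ with $a_1\ge\dots\ge a_m$, this reduces to $\tau\sum_{i=1}^m a_i\le m\sum_{i=1}^k a_i+m(\tau-k)a_{k+1}$, which follows from $a_i\ge a_{k+1}$ for $i\le k$ and $a_i\le a_{k+1}$ for $i\ge k+1$, provided $\tau\le m$; the regime $\tau>m$ is trivial because $\sum_{\bt\ne 0}|\bt|^{-\tau}$ then converges unconditionally. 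In contrast with the weighted setting of Theorem \ref{t:fdimW}, where an analogous bound would have to be summed against a weighted $\psi_j$-product, here the geometric-mean form combines perfectly with the product structure of $\cm(\bq,\delta)$.

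Finally, summing the per-rectangle estimate over all $\bk\in\ca_m(\delta)$ and invoking $\#\ca_m(\delta)\asymp\log^{m-1}(\delta^{-1})$ from Lemma \ref{l:caramd} produces
\[
\mu(\cm(\bq,\delta))\ll(\delta+q^{-\tau}\delta^{\tau/m})\log^{m-1}(\delta^{-1}),
\]
as claimed.
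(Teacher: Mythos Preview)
Your proposal is correct and follows the same scaffolding as the paper (decompose via Lemma~\ref{l:rectangledecom}, apply Lemma~\ref{l:meaupp} to each rectangle, sum using Lemma~\ref{l:caramd}), but the core estimate for the Fourier sum $S_{\bq,\bk}$ is handled differently. You invoke the sharp dyadic count $S_{\bq,\bk}\asymp(\delta_{(k+1)}\cdots\delta_{(m)})^{-1}\delta_{(k+1)}^{\tau-k}$ from the Remark in Section~\ref{s:Fourierpart} and then prove the separate rearrangement inequality $\delta_{(1)}\cdots\delta_{(k)}\delta_{(k+1)}^{\tau-k}\le\delta^{\tau/m}$ (which is indeed correct for $\tau\le m$, and the case $\tau>m$ is trivial as you note). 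The paper instead bypasses both of these steps with a single pointwise inequality: writing $|\bt|^{-\tau}=\min_j\min\{1,|t_j|^{-\tau}\}\le\prod_{j=1}^m\min\{1,|t_j|^{-\tau/m}\}$ immediately factorizes the sum into a product of one-dimensional sums, each bounded by $\max\{1,2^{k_j(1-\tau/m)}\}$, and multiplying these gives $\delta^{\tau/m-1}$ directly with no case analysis or ordering of the $\delta_j$. Your route is a bit more labour-intensive but has the virtue of making explicit that the estimate is driven by the geometric mean of the side lengths; the paper's factorization trick is slicker and worth knowing, since it avoids both the appeal to the sharper dyadic count and the auxiliary inequality.
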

 \begin{proof}
 	By Lemma \ref{l:rectangledecom},
 		\[\cm(\bq,\delta)\subset \bigcup_{\bk\in\ca_m(\delta)}R(\bq,2^{-\bk}).\]
 		For each $\bk\in \ca_m(\delta)$, apply Lemma \ref{l:meaupp} with  $\bd=2^{-\bk}=(2^{-k_1},\dots,2^{-k_m})$ yields
 	\[\mu\big(R(\bq,2^{-\bk})\big)\ll \delta\Bigg(1+\sum_{\substack{\bt\in\Z^m\setminus\{0\}\\ \forall1\le j\le m, |t_j|\le 2^{k_j+1}}}|\widehat{\mu}(t_1\bq,\dots,t_m\bq)|\Bigg),\]
 	where we use $2^{-(k_1+\dots+k_m)}\asymp \delta$.
 	Let $S$ denote the inner-most summation. By the Fourier decay assumption,
 	\[S\ll |\bq|^{-\tau}\sum_{\substack{\bt\in\Z^m\setminus\{0\}\\ \forall1\le j\le m, |t_j|\le 2^{k_j+1}}}|\bt|^{-\tau}.\]
 	With the convention $1/0=\infty$, for any $\bt\in\Z^m\setminus\{0\}$,
 	\[\begin{split}
 		|\bt|^{-\tau}&=\min\{1,|t_1|^{-\tau},\dots,|t_d|^{-\tau}\}\\
 		&=\min\big\{\min\{1,|t_1|^{-\tau}\},\dots,\min\{1,|t_d|^{-\tau}\}\big\}\\
 		&\le \min\{1,|t_1|^{-\tau/m}\}\cdots\min\{1,|t_d|^{-\tau/m}\}.
 	\end{split}\]
 	Substitute this upper estimate, it follows that
 	\begin{align}
 		S&\ll |\bq|^{-\tau}\sum_{\substack{\bt\in\Z^m\setminus\{0\}\\ \forall1\le j\le m, |t_j|\le 2^{k_j+1}}}\min\{1,|t_1|^{-\tau/m}\}\cdots\min\{1,|t_m|^{-\tau/m}\}\notag\\
 		&\ll |\bq|^{-\tau}\prod_{j=1}^{m}\bigg(\sum_{0\le|t_j|\le2^{k_j+1}}\min\{1,|t_j|^{-\tau/m}\}\bigg).\label{eq:sfourierdecay}
 	\end{align}
 	Since $\tau$ is not an integer, we have $\tau/m\ne 1$. For each $1\le j\le m$,
 	\[\sum_{0\le|t_j|\le2^{k_j+1}}\min\{1,|t_j|^{-\tau/m}\}\ll \max\{1,2^{k_j(1-\tau/m)}\}.\]
 	Therefore,
 	\[\begin{split}
 		\mu\big(R(\bq,2^{-\bk})\big)&\ll \delta(1+S)\ll \delta\big(1+|\bq|^{-\tau}\max\{1,2^{(k_1+\cdots+k_m)(1-\tau/m)}\}\big)\\
 		&\ll \delta+|\bq|^{-\tau}\delta^{\tau/m}.
 	\end{split}\]
 	Summing over $\bk\in\ca_m(\delta)$ and using Lemma \ref{l:caramd} ($\#\ca_m(\delta)\asymp \log^{m-1}(\delta^{-1})$), we conclude the lemma.
 \end{proof}

 \begin{proof}[Proof of Theorem \ref{t:fdimM}: upper bound]
 	Let $\mu$ and $\tau>\tau(\psi)$ be as in the assumption at the begining of this subsection. For any $\bq\in\Z^n\setminus\{0\}$ with $\psi(\bq)\ne0$, by Lemma \ref{l:fdimmeauppmul},
 	\[\begin{split}
 		\mu\big(\cm(\bq,\psi(\bq))\big)&\ll (\psi(\bq)+|\bq|^{-\tau}\psi(\bq)^{\tau/m})\log^{m-1}(\psi(\bq)^{-1})\\
 		&\ll \psi(\bq)\log^{m-1}(\psi(\bq)^{-1})+\bigg(\frac{\psi(\bq)^{1/m}}{\bq}\bigg)^{\tau-\epsilon},
 	\end{split}\]
 	for some $\epsilon>0$ satisfying $\tau-\epsilon>\tau(\psi)$.
 	Summing over $\bq\in\Z^n\setminus\{0\}$, we have $\sum_{\bq\in\Z^n\setminus\{0\}}^\infty\mu(\cm(\bq,\psi(\bq)))<\infty$. By Borel--Cantelli lemma,
 	\[\mu\big(\cm^\times(n,m;\psi)\big)=\mu\Big(\limsup_{|\bq|\to\infty}\cm\big(\bq,\psi(\bq)\big)\Big)=0.\]
 	This contradicts that $\mu$ is supported on $\cm^\times(n,m;\psi)$. Therefore, by the arbitrariness of $\tau>\tau(\psi)$,
 	\[\fdim \cm^\times(n,m;\psi)\le 2\tau(\psi).\qedhere\]
 \end{proof}
 \subsection{Lower bound for $\fdim \cm^\times(n,m;\psi)$}
 \begin{proof}[Proof of Theorem \ref{t:fdimM}: lower bound]
 	It is easy to see that $W(n,m;\psi^{1/m})\subset \cm^\times(n,m;\psi)$. Therefore, by Theorem \ref{t:fdim},
 	\[\fdim \cm^\times(n,m;\psi)\ge \fdim W(n,m;\psi^{1/m})\ge 2s(\psi^{1/m})=2\tau(\psi).\qedhere\]
 \end{proof}

\section{Fourier dimensions of product sets and its application}\label{s:product}
We will show that the Fourier dimensions of product sets with null Lebesgue measure never increases.
\begin{proof}[Proof of Theorem \ref{t:product}]
	Let $E\subset\R^k$ and $F\subset\R^{d-k}$ be two Borel sets with zero Lebesgue measure. Seeking a contradiction, suppose that
	\[\fdim(E\times F)>\min\{\fdim E,\fdim F\}.\]
	Without loss of generality, assume that $\fdim E$ is the minimum value of the right. By definition, there exist $s>\fdim E$ and a probability measure $\mu$ with $\supp(\mu)\subset E\times F$ such that
	\[|\widehat{\mu}(\bx)|\ll |\bx|^{-s/2}\quad\text{for all $\bx\in\R^d$}.\]
	Let $\nu=\Pi(\mu)$ denote the projection of $\mu$ onto $E$. Since $\Pi$ is continuous, we have that $\Pi(\supp(\mu))$ is still compact. Moreover, by compactness we have
	\[\supp(\nu)= \Pi\big(\supp(\mu)\big)\subset E,\]
	which means that $\nu$ is a Borel probability measure supported on $E$. For any $\bx\in\R^k$ it follows that
	\begin{align}
		|\widehat{\nu}(\bx)|&=\bigg|\int_{\R^k}e^{-2\pi\ic\bm\xi_1\cdot\bx}\dif\nu(\bm\xi_1)\bigg|=\bigg|\int_{\R^d}e^{-2\pi\ic\bm\xi_1\cdot\bx}\dif\mu(\bm\xi_1,\bm\xi_2)\bigg|\notag\\
		&=|\widehat{\mu}(\bx,0)|\ll |\bx|^{-s/2}.\label{eq:decay}
	\end{align}
	We consider two cases.

	\noindent\textbf{Case 1:} $\fdim E<k$. The assumption $s>\fdim E$, together with \eqref{eq:decay}, contradicts the definition of $\fdim E$.

	\noindent\textbf{Case 2:} $\fdim E=k$.
	 If $s>k$, then $\widehat{\nu}\in L^2(\R^k)$, and by \cite[Theorem 3.3]{Ma15}, it follows that $\nu\in L^2(\R^k)$. This implies that
	 $\nu$ is absolutely continuous with respect to Lebesgue measure and supported on $E$, contradicting the fact that $E$ has zero Lebesgue measure.
	 Therefore, we must have $s\le k$, which in turn contradicts the assumption that $s>\fdim E=k$.
%	Next, we prove that the projection of a Salem set to an arbitrary $k$-dimensional hyperplane in $\R^d$ must carry full Fourier dimension (in the appropriate ambient dimension). Suppose that $G\subset\R^d$ is Salem. Since Salem sets are invariant under orientation, we may assume that the $k$-dimensional hyperplane to which $G$ projects coincides with the standard coordinate subspace $\R^k\times \{0\}$. Let $\Pi(G)$ denote the projection. If
%	\[\fdim \Pi(G)<\min\{\fdim G,k\},\]
%	then $\Pi(G)$ is of zero $\lm^k$-measure, and so $\Pi(G)\times \R^{d-k}$ is of zero $\lm^d$-measure. By the Fourier dimensions of product sets given above and the monotonicity,
%	\[\fdim G\le\min\{\fdim \Pi(G),d-k\}\le\fdim \Pi(G)<\min\{\fdim G,k\}\le\fdim G,\]
%	which is a contradiction.
\end{proof}
\begin{rem}
	To ensure that the inequality $\fdim E\le \hdim E$ is always meaningful, we require $0\le s\le k$ in the definition of the Fourier dimension. However, if $E$ has positive Lebesgue measure, then it is possible for $E$ to support a measure whose Fourier decays with an exponent strictly greater than $k/2$. For example, let $E=[0,1]^k$ and let $\mu$ be the Lebesgue measure restricted to $E$. To exclude this possibility, we restrict $E$ to be a null set.
\end{rem}

\begin{proof}[Proof of Corollary \ref{c:prod}]
	Let $W(n_i,m_i;\Psi_i)$ be the set with smallest Fourier dimension among $1\le \ell\le k$. Since $W(n_\ell,m_\ell;\Psi_\ell)$ is null for $1\le \ell\le k$, $\prod_{\ell\ne i} W(n_\ell,m_\ell;\Psi_\ell)$ is also null. By Theorem \ref{t:product},
	\[\begin{split}
		&\fdim\bigg(\prod_{\ell=1}^{k}W(n_\ell,m_\ell;\Psi_\ell)\bigg)\\
		=&\min\bigg\{\fdim W(n_i,m_i;\Psi_i),\fdim\bigg(\prod_{\ell\ne i} W(n_\ell,m_\ell;\Psi_\ell)\bigg)\bigg\}\\
		=&\fdim W(n_i,m_i;\Psi_i).\qedhere
	\end{split}\]

\end{proof}

\subsection*{Acknowledgments}
Y. He was supported by NSFC (No. 12401108).
\subsection*{Data Availability}
 We do not analyse or generate any data sets, because our work proceeds within a theoretical and mathematical approach.
 \subsection*{Conflict of interest} On behalf of all authors, the corresponding author states that there is no Conflict of  interest.
%The author would like to thank Prof. Lingmin Liao for many useful suggestions.

\end{document}